\def\todaysdate{20\textsuperscript{th} April 2021}
\definecolor{lightblue}{rgb}{0.8,0.8,1}
\numberwithin{equation}{section}
\numberwithin{figure}{section}
\definecolor{vdarkred}{rgb}{0.7,0,0}
\declaretheoremstyle[
  spaceabove=\topsep,
  spacebelow=\topsep,
  headpunct=,
  numbered=no,
  postheadspace=1ex,
  headfont=\color{vdarkred}\normalfont\bfseries,
  bodyfont=\normalfont\itshape,
]{colored}
\declaretheoremstyle[
  spaceabove=\topsep,
  spacebelow=\topsep,
  headpunct=,
  numbered=no,
  postheadspace=1ex,
  headfont=\normalfont\bfseries,
  bodyfont=\normalfont\itshape,
]{italic}
\declaretheoremstyle[
  spaceabove=\topsep,
  spacebelow=\topsep,
  headpunct=,
  numbered=no,
  postheadspace=1ex,
  headfont=\normalfont\bfseries,
  bodyfont=\normalfont\upshape,
]{upright}
\declaretheorem[style=italic,name=Theorem,numbered=yes,numberwithin=section]{thm}
\declaretheorem[style=italic,name=Lemma,numbered=yes,numberlike=thm]{lem}
\declaretheorem[style=italic,name=Proposition,numbered=yes,numberlike=thm]{prop}
\declaretheorem[style=italic,name=Corollary,numbered=yes,numberlike=thm]{coro}
\declaretheorem[style=italic,name=Theorem,numbered=yes,numberwithin=section]{athm}
\declaretheorem[style=upright,name=Definition,numbered=yes,numberlike=thm]{defn}
\declaretheorem[style=upright,name=Summary,numbered=yes,numberlike=thm]{summary}
\declaretheorem[style=upright,name=Remark,numbered=yes,numberlike=thm]{rmk}
\declaretheorem[style=upright,name=Example,numbered=yes,numberlike=thm]{eg}
\declaretheorem[style=upright,name=Notation,numbered=yes,numberlike=thm]{notation}
\declaretheorem[style=upright,name=Construction,numbered=yes,numberlike=thm]{construction}
\declaretheorem[style=upright,name=Remark and Notation,numbered=yes,numberlike=thm]{rmknotation}
\renewcommand*{\@seccntformat}[1]{\upshape\csname the#1\endcsname.\hspace{1ex}}
\renewcommand*{\part}{\@startsection{part}{0}{\z@}%
	{2.5ex \@plus 1ex \@minus 0.2ex}%
	{1.5ex \@plus 0.2ex}%
	{\normalfont\large\bfseries\centering}}
\renewcommand*{\section}{\@startsection{section}{1}{\z@}%
	{2.5ex \@plus 1ex \@minus 0.2ex}%
	{1.5ex \@plus 0.2ex}%
	{\normalfont\large\bfseries}}
\renewcommand*{\subsection}{\@startsection{subsection}{2}{\z@}%
	{2.5ex \@plus 1ex \@minus 0.2ex}%
	{-1.5ex \@plus -0.2ex}%
	{\normalfont\normalsize\bfseries}}
\renewcommand*{\subsubsection}{\@startsection{subsubsection}{3}{\z@}%
	{2.5ex \@plus 1ex \@minus 0.2ex}%
	{-1.5ex \@plus -0.2ex}%
	{\normalfont\normalsize\bfseries}}
\renewcommand*{\paragraph}{\@startsection{paragraph}{4}{\z@}%
	{2.5ex \@plus 1ex \@minus 0.2ex}%
	{-1.5ex \@plus -0.2ex}%
	{\normalfont\normalsize\bfseries}}
\renewcommand*{\subparagraph}{\@startsection{subparagraph}{5}{\z@}%
	{2.5ex \@plus 1ex \@minus 0.2ex}%
	{-1.5ex \@plus -0.2ex}%
	{\normalfont\normalsize\slshape}}
\newcommand{\cf}{\textit{cf}.\ }
\newcommand{\Cf}{\textit{Cf}.\ }
\newcommand{\longtwoheadleftarrow}{\leftarrow\mathrel{\mkern-14mu}\longleftarrow}
\newcommand{\longtwoheadrightarrow}{\longrightarrow\mathrel{\mkern-14mu}\rightarrow}
\newcommand{\longhookrightarrow}{\ensuremath{\lhook\joinrel\longrightarrow}}
\newcommand{\hotop}{\ensuremath{\mathrm{Ho}(\mathrm{Top})}}
\newcommand{\too}{\ensuremath{\longrightarrow}}
\newenvironment{itemizeb}%
{\begin{compactitem}

}%
{\end{compactitem}}
\newcommand{\cA}{\mathcal{A}}
\newcommand{\cB}{\mathcal{B}}
\newcommand{\cC}{\mathcal{C}}
\newcommand{\cD}{\mathcal{D}}
\newcommand{\cM}{\mathcal{M}}
\newcommand{\cN}{\mathcal{N}}
\newcommand{\cO}{\mathcal{O}}
\newcommand{\cS}{\mathcal{S}}
\newcommand{\cT}{\mathcal{T}}
\newcommand{\bF}{\mathbb{F}}
\newcommand{\bK}{\mathbb{K}}
\newcommand{\bN}{\mathbb{N}}
\newcommand{\bQ}{\mathbb{Q}}
\newcommand{\bR}{\mathbb{R}}
\newcommand{\bZ}{\mathbb{Z}}
\newcommand{\eswc}{\ensuremath{\mathsf{ESC}}}
\newcommand{\vswc}{\ensuremath{\mathsf{VSC}}}
\newcommand{\swc}{\ensuremath{\mathsf{SC}}}
\newcommand{\cswc}{\ensuremath{\mathsf{CSC}}}
\newcommand{\lswc}{\ensuremath{\mathsf{LSC}}}
\newcommand{\xswc}{\ensuremath{\mathsf{XSC}}}
\newcommand{\disc}{\ensuremath{\mathsf{D}}}
\newcommand{\sa}{\ensuremath{\mathsf{a}}}
\newcommand{\sm}{\ensuremath{\mathsf{m}}}
\newcommand{\cmap}[4]{\ensuremath{\mathrm{CMap}_{#1}^{#2}(#3;#4)}}
\newcommand{\cmapd}[4]{\ensuremath{\dot{\mathrm{C}}\mathrm{Map}_{#1}^{#2}(#3;#4)}}
\newcommand{\cgamma}[4]{\ensuremath{\mathrm{C}\Gamma_{#1}^{#2}(#3;#4)}}
\newcommand{\cgammad}[4]{\ensuremath{\dot{\mathrm{C}}\Gamma_{#1}^{#2}(#3;#4)}}
\renewcommand{\geq}{\geqslant}
\renewcommand{\leq}{\leqslant}
\renewcommand{\footnoterule}{%
  \kern -3pt
  \hrule width \textwidth height 0.4pt
  \kern 2.6pt
}
\begin{document}
\title{\Large\bfseries Configuration-mapping spaces and homology stability}
\author{\normalsize Martin Palmer and Ulrike Tillmann}
\date{\small\todaysdate}
\maketitle
{
\makeatletter
\renewcommand*{\BHFN@OldMakefntext}{}
\makeatother
\footnotetext{2020 \textit{Mathematics Subject Classification}: 55R80, 57N65, 55N25}
\footnotetext{\textit{Key words and phrases}: Configuration-mapping spaces, configuration-section spaces, homological stability, polynomial functors, braid categories, Swiss cheese operads.}
\footnotetext{This work was initiated at the INI programme \emph{Homotopy Harnessing Higher Structures} (EPSRC grant number EP/R014604/1). The first author was partially supported by a grant of the Romanian Ministry of Education and Research, CNCS - UEFISCDI, project number PN-III-P4-ID-PCE-2020-2798, within PNCDI III. The second author would also like to thank MSRI (NSF Grant 1440140).}
}
\begin{abstract}
For a given bundle $\xi \colon E \to M$ over a manifold, \emph{configuration-section spaces} parametrise finite subsets $z \subseteq M$ equipped with a section of $\xi$ defined on $M \smallsetminus z$, with prescribed ``charge'' in a neighbourhood of the points $z$. These spaces may be interpreted physically as spaces of fields that are permitted to be singular at finitely many points, with constrained behaviour near the singularities. As a special case, they include the Hurwitz spaces, which parametrise branched covering spaces of the $2$-disc with specified deck transformation group.

We prove that configuration-section spaces are homologically stable (with $\bZ$ coefficients) whenever the underlying manifold $M$ is connected and has non-empty boundary and the charge is ``small'' in a certain sense. This has a partial intersection with the work on Hurwitz spaces of Ellenberg, Venkatesh and Westerland.
\end{abstract}

\section{Introduction}

Configuration spaces of points in manifolds have been intensively studied in topology and geometry, and may be interpreted physically as a model for particles moving in a background space. In \emph{labelled} configuration spaces, each particle is equipped with an additional parameter, taking values in a fixed space $X$ or, more generally, in a bundle over the underlying manifold. An equally physically relevant setting corresponds to equipping not the particles, but instead their \emph{complement}, with a map to $X$ or a section of a bundle over the underlying manifold. For maps to a fixed space $X$, these are the \emph{configuration-mapping spaces}, discussed in \cite{EllenbergVenkateshWesterland2016HomologicalstabilityHurwitz, EllenbergVenkateshWesterland2012HomologicalstabilityHurwitz}. Since these spaces are intended to model particles moving in physical fields, which typically take values in a bundle over the underlying manifold, one is naturally led, as suggested by Segal in \cite{Segal2014}, to consider more generally \emph{configuration-section spaces}, which we introduce in \S\ref{s:cgamma}.

Roughly, configuration-mapping spaces are defined as follows. Given a $d$-dimensional manifold $M$,\footnote{We assume in this paragraph that $M$ is oriented, to avoid a small subtlety arising for non-orientable manifolds; see \S\ref{s:cmap} for the general case.} a space $X$ and a set $c \subseteq [S^{d-1},X]$ of unbased homotopy classes of maps from $S^{d-1}$ to $X$, a point of the $k$-th \emph{configuration-mapping space}
\[
\cmap{k}{c}{M}{X}
\]
consists of a subset $z \subset \mathring{M}$ of the interior of $M$ of cardinality $k$ and a continuous map $f \colon M \smallsetminus z \to X$. Moreover, we require that the restriction of $f$ to a small punctured neighbourhood of each point of $z$ lies in one of the homotopy classes in $c$. The homotopy class of the germ of $f$ near $p \in z$ may be thought of as the ``\emph{charge}'' (or ``\emph{monodromy}'' or ``\emph{singularity type}'') of the particle $p$, and $c$ is therefore the set of allowed charges of the particles in the system being modelled. For a subset $D \subseteq \partial M$ (usually a point or a disc) and a basepoint $* \in X$, one may also impose the boundary condition that $f(D) = \{*\}$. See \S\ref{s:cmap} for precise definitions, including how to topologise this set.

Configuration-section spaces are defined in an analogous way, for a bundle $\xi \colon E \to M$ and a basepoint $* \in \partial M$. In this case the notion of ``charge'' is a little more subtle to define, and takes values in a certain quotient (depending on the bundle $\xi$) of the set $[S^{d-1},X]$, where $X$ denotes the fibre of $\xi$ over the basepoint. (Thus, in this setting, we fix a subset $c$ of this quotient.) A point of the $k$-th \emph{configuration-section space}
\[
\cgamma{k}{c}{M}{\xi}
\]
consists of a subset $z \subset \mathring{M}$ of cardinality $k$ and a continuous section of $\xi$ defined over $M \smallsetminus z$, with a condition (determined by $c$) on the local behaviour of the section near each point $p \in z$. See \S\ref{s:cgamma} for precise definitions, including an explanation of the ``charge'' of a configuration-section when the bundle $\xi$ is non-trivial.

\paragraph{Examples of configuration-mapping spaces.}
In the special case when $d=2$ and $X=BG$, the ``set of allowed charges'' $c$ is a collection of conjugacy classes of $G$, since $[S^1,BG] = \mathrm{Conj}(G)$. Up to homotopy equivalence, the space $\cmap{k}{c}{M}{BG}$ parametrises branched coverings of the surface $M$ with deck transformation group $G$ and with monodromy around the branch points lying in $c$. In particular, when $M=D^2$, these are the \emph{Hurwitz spaces} associated to the pair $(G,c)$; see Remark \ref{rmk:Hurwitz1} and Example \ref{eg:Hurwitz2} for more details. The main result of \cite{EllenbergVenkateshWesterland2016HomologicalstabilityHurwitz} is a rational homological stability theorem for Hurwitz spaces when $G$ is a finite group and $c$ is a single conjugacy class that generates $G$ and is ``non-splitting'' (\cf Example \ref{rmk:EVW-comparison} below).

\paragraph{Examples of configuration-section spaces.}
Examples of configuration-section spaces that are typically not configuration-mapping spaces (in other words, where the bundle $\xi$ is typically non-trivial) include moduli spaces of configurations of points whose complement is equipped with:
\begin{itemizeb}
\item a tangential structure (orientation, spin, etc.);
\item a flat connection on a given principal $G$-bundle over $M$;
\item a non-vanishing vector field.
\end{itemizeb}
These examples are discussed in more detail in Examples \ref{eg:tangential}, \ref{eg:flat-connection} and \ref{eg:vector-fields}.

\paragraph{Homological stability.}
Our main result is that configuration-section spaces are \emph{homologically stable}, subject to a condition on the ``set of allowed charges'' $c$. (This condition will in fact force $c$ to have size $1$.) As mentioned above, $c$ is a subset of a certain quotient (depending on the bundle $\xi \colon E \to M$) of the set $[S^{d-1},X]$, where $X$ is the fibre of $\xi$. By taking pre-images, we will view it simply as a subset of $[S^{d-1},X]$, which we also denote by $c$ by abuse of notation.

\begin{athm}[{Theorem \ref{thm-hom-stab}}]
\label{tmain}
Let $M$ be a connected manifold of dimension $d \geq 2$ with basepoint $* \in \partial M$ and let $\xi \colon E \to M$ be a fibre bundle whose fibre over $*$ we denote by $X$. Assume that
\begin{equation}
\label{eq:charge-condition}
\text{the preimage of } c \text{ under } [S^{d-1},X] \longtwoheadleftarrow \pi_{d-1}(X) \text{ is a single element, }
\end{equation}
so $c$ corresponds to a fixed point of the $\pi_1(X)$-action on $\pi_{d-1}(X)$. Then the stabilisation maps
\begin{equation}
\label{eq:stabilisation-map}
\cgamma{k}{c,*}{M}{\xi} \longrightarrow \cgamma{k+1}{c,*}{M}{\xi}
\end{equation}
induce isomorphisms on $H_i(-;\bZ)$ in the range $k \geq 2i+4$ and surjections in the range $k \geq 2i+2$. With field coefficients, these ranges may be improved to $k \geq 2i+2$ and $k \geq 2i$ respectively.
\end{athm}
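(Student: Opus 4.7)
The plan is to apply the standard semi-simplicial resolution strategy for homological stability, adapted to the section-equipped setting. First, the stabilisation map (\ref{eq:stabilisation-map}) should be realised concretely by using a fixed half-disc collar of the basepoint $* \in \partial M$ and inserting one additional particle carrying the prescribed local germ into this collar. Hypothesis (\ref{eq:charge-condition}) is essential already at this stage: in a non-trivial bundle $\xi$ the germ of the section at a new particle is a priori only well-defined up to the $\pi_1(X)$-action on $\pi_{d-1}(X)$, and requiring a single fixed orbit is exactly what allows us to insert new particles canonically, independently of the ambient section.

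Next, for each $k$ I would build an augmented semi-simplicial space $A_\bullet \longtwoheadrightarrow \cgamma{k}{c,*}{M}{\xi}$ whose space of $p$-simplices consists of a configuration-section together with an ordered $(p+1)$-tuple of pairwise disjoint embedded arcs from $*$ to distinct particles of the configuration, meeting the configuration only at their endpoints. Face maps forget an arc, and the $0$-th face map, after cutting $M$ along the chosen arc, matches up with the stabilisation map from $\cgamma{k-1}{c,*}{M}{\xi}$. The key geometric step is to show that $\lVert A_\bullet \rVert \longrightarrow \cgamma{k}{c,*}{M}{\xi}$ is roughly $\lfloor (k-1)/2 \rfloor$-connected. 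I would obtain this by comparing with the classical arc complex on the plain configuration space of $M$ (whose connectivity in this range is known from work of Randal-Williams and the first author) and verifying that the section-forgetting map has contractible homotopy fibres on each simplex, which reduces to showing that the space of section extensions across a removed arc is contractible --- exactly what (\ref{eq:charge-condition}) guarantees.

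Once high connectivity is established, the associated spectral sequence
\[
E^1_{p,q} \;=\; H_q(A_p;\bZ) \;\Longrightarrow\; H_{p+q}\bigl(\lVert A_\bullet \rVert;\bZ\bigr)
\]
has vanishing abutment in the appropriate range, while the deformation retraction of each $A_p$ onto $\cgamma{k-p-1}{c,*}{M}{\xi}$ identifies its $E^1$-page with the homology of lower configuration-section spaces. The $d^1$-differential is an alternating sum of stabilisation maps which collapses, by the braid-style commutation of particle insertions, to either zero or a single stabilisation map depending on parity. A standard induction on $k$ and on homological degree then yields the integral stability ranges $k \geq 2i+4$ (isomorphism) and $k \geq 2i+2$ (surjection). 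The field-coefficient improvement by one in slope follows from the usual trick of refining the double-complex argument using that short exact sequences split over a field, the same mechanism that sharpens Randal-Williams's integral range for classical configuration spaces.

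The principal obstacle is the high-connectivity estimate, specifically the identification of the homotopy fibres of the section-forgetting map on each simplex. One must verify that coherent extension of a section across a thickened neighbourhood of an embedded arc is unobstructed and determined up to contractible choice, and this is precisely where condition (\ref{eq:charge-condition}) enters: without it, removing an arc could accumulate non-trivial monodromy and the face maps would fail to land in $\cgamma{k-1}{c,*}{M}{\xi}$. Once this is in place, the remainder of the argument is a careful transposition of the Hatcher--Wahl and Randal-Williams scheme to the setting of configuration-section spaces.
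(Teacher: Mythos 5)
Your overall strategy (a semi-simplicial arc resolution in the style of Hatcher--Wahl and Randal-Williams) is genuinely different from the paper's, which instead fibres $\cgamma{k}{c,*}{M}{\xi}$ over $C_k(\mathring{M})$, packages the fibres into a monodromy functor on a braid category $\cC(M)$, proves that functor is polynomial of degree $q$ in homological degree $q$, and invokes Krannich's twisted homological stability plus a Serre spectral sequence comparison. But your version has a genuine gap at its central geometric step. You claim that condition \eqref{eq:charge-condition} makes ``the space of section extensions across a removed arc'' contractible. It does not: the space of extensions of a section over a punctured arc-neighbourhood, with prescribed boundary behaviour and charge in $c$, is homotopy equivalent to $\Omega^{d-1}_c X$, the union of components of $\Omega^{d-1}X$ lying over $c$. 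Condition \eqref{eq:charge-condition} says exactly that this is a \emph{single path-component} of $\Omega^{d-1}X$ --- connected, but in general far from contractible (take $X = S^{d-1}$ and $c$ the class of degree-one maps; the Hurwitz case $X=BG$, where $\Omega BG \simeq G$ is discrete, is misleadingly degenerate here). Consequently your claimed homotopy fibres are not contractible, and, more damagingly, the space of $p$-simplices $A_p$ does not deformation retract onto $\cgamma{k-p-1}{c,*}{M}{\xi}$: it is, up to homotopy, a bundle over it with fibre $(\Omega^{d-1}_c X)^{p+1}$. The $E^1$-page of your resolution spectral sequence therefore involves $H_*\bigl(\cgamma{k-p-1}{c,*}{M}{\xi} \times (\Omega^{d-1}_c X)^{p+1}\bigr)$ rather than the homology of lower configuration-section spaces alone, and the induction on $k$ as you describe it does not close up.

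These extra factors are not a technicality to be absorbed; controlling them is the actual content of the proof. In the paper this is Proposition \ref{prop:polynomiality}: the fibre of the forgetful fibration decomposes as $Y \times Z^k$ with $Z = \Omega^{d-1}_c X$, and path-connectedness of $Z$ (this is where \eqref{eq:charge-condition} really enters --- it forces $H_0(Z;\bK)=\bK$, so that $\Delta F_i$ involves only $F_{i-1},\dots,F_0$ and not $F_i$ itself) yields polynomiality of degree $q$ for $H_q$ of the fibre, after which twisted stability for $C_k(\mathring{M})$ does the work. If you want to keep a resolution-style argument you would need to run it with these twisted/polynomial coefficients built in, which essentially reproduces the EVW or Krannich machinery rather than the untwisted Randal-Williams scheme. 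Finally, your account of the coefficient ranges is backwards relative to what the argument can deliver: the sharper range is obtained first over a field (where the K\"unneth isomorphism is natural and Tor-free), and the integral range then \emph{loses} one degree via the coefficient sequences $0 \to \bZ \to \bQ \to \bQ/\bZ \to 0$; the field case is not an ``improvement by splitting'' of an integral argument.
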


The (slope of the) stability range can be improved for certain manifolds when 2 is invertible in the coefficients (Theorem \ref{thm-hom-stab-improved}).

Theorem \ref{tmain} is an analogue of classical homological stability (see \cite{Segal1973Configurationspacesand, McDuff1975Configurationspacesof, Segal1979topologyofspaces}) for ordinary (unordered) configuration spaces on a connected, open manifold. Although homological stability results have also been proven for configuration spaces defined by \emph{local} data more generally, such as labelled configuration spaces \cite{Randal-Williams2013Homologicalstabilityunordered} and bounded symmetric powers \cite{KupersMillerTran2016}, here we are dealing with \emph{global} data, where there are fewer such results in the literature (one example being oriented configuration spaces \cite{Palmer2013}).

We remark that, in the ``local-to-global'' principle for homological stability of \cite{KupersMiller2018}, the terms ``local'' and ``global'' are used a little differently: they show that, for a framed, $\bN$-graded $E_d$-algebra $A$, homological stability holds for the topological chiral homology $\int_{\bR^d} A$ (``locally'') if and only if it holds for $\int_M A$ for all connected, open $d$-manifolds $M$ (``globally''). In particular, the framed $E_d$-algebra $A$ under consideration may be built out of configurations in $\bR^d$ equipped with either local or global data (in our sense).

\begin{eg}\label{eg:spin-etc}
Note that, if $\pi_{d-1}(X) = \{*\}$, then our result applies directly to the full configuration-section space $\cgamma{k}{*}{M}{\xi}$ without any restriction on the charges. This is the case in several of the examples mentioned above. In particular, Theorem \ref{tmain} applies directly to configuration-section spaces where the complement is equipped with a spin structure (if $\mathrm{dim}(M) \geq 3$), string structure (if $\mathrm{dim}(M) \geq 5$), etc. See Example \ref{eg:tangential} for more details. It also applies directly to the example where the complement of a configuration is equipped with a flat connection for a given principal $G$-bundle over $M$, if $\mathrm{dim}(M) = 3$ and $G$ is a Lie group. See Example \ref{eg:flat-connection} for more details.
\end{eg}

\begin{eg}
In the case of non-vanishing vector fields on the complement, we have $\pi_{d-1}(X) = \pi_{d-1}(\bR^d \smallsetminus \{0\}) \cong \bZ$, so Theorem \ref{tmain} does not apply unless we restrict the allowed charge of the field at the configuration points. Geometrically, the charge in this case is simply the winding number of the vector field, so Theorem \ref{tmain} applies when we restrict to non-vanishing vector fields with fixed winding number around each configuration point.
\end{eg}

\begin{eg}[\emph{Relation with the result of \textup{\cite{EllenbergVenkateshWesterland2016HomologicalstabilityHurwitz}}.}]
\label{rmk:EVW-comparison}
In the case of Hurwitz spaces, our assumption is that $c \subseteq \mathrm{Conj}(G)$ is a single conjugacy class of size $1$ (corresponding to an element of the centre of $G$). The result of \cite{EllenbergVenkateshWesterland2016HomologicalstabilityHurwitz}, by contrast, allows larger conjugacy classes, although it is specific to the setting of Hurwitz spaces $\mathrm{Hur}_{G,k}^c \simeq \cmap{k}{c,*}{D^2}{BG}$. These two results are therefore somewhat orthogonal in terms of generality -- and indeed our methods are entirely different to those of Ellenberg, Venkatesh and Westerland.

In more detail, the main result of \cite{EllenbergVenkateshWesterland2016HomologicalstabilityHurwitz} says that, in a stable range, the rational homology groups $H_i(\mathrm{Hur}_{G,k}^c;\bQ)$ are periodic with respect to $k$ (with a period depending on the pair $(G,c)$), as long as $G$ is finite, the conjugacy class $c$ generates $G$ and $c$ is ``non-splitting'' in the sense that $H \cap c$ is either empty or a single conjugacy class for every subgroup $H \leq G$. On the other hand, specialising Theorem \ref{tmain} to the case $(M=D^2,X=BG)$, we show that the integral homology groups $H_i(\mathrm{Hur}_{G,k}^c;\bZ)$ are constant ($1$-periodic) in a stable range, as long as $\lvert c \rvert = 1$. The intersection of our results is the case where $G$ is finite cyclic and $c = \{g\}$ for a generator $g \in G$.
\end{eg}

\paragraph{Split-injectivity.}

For classical configuration spaces, and configuration spaces where the points (rather than the complement) are labelled, the analogous stabilisation maps induce \emph{split-injections} on homology in all degrees. This follows from an easy argument (see \cite[page 103]{McDuff1975Configurationspacesof}; also \cite[\S 4]{ManthorpeTillmann2014Tubularconfigurationsequivariant} for a diffeomorphism equivariant stable splitting) that depends on the existence of multi-valued maps $C_k(M) \dashrightarrow C_{k-r}(M)$ that ``forget $r$ points from the configuration in all $\binom{k}{r}$ possible ways''.

However, such maps do not exist for configuration-section spaces, since in this setting one cannot simply forget a point; one must also extend the section on the complement of the configuration to the forgotten point, which is in general impossible. Indeed, we expect that split-injectivity on homology does \emph{not} hold for configuration-section spaces in general. Nevertheless, we do have a partial positive result in this direction for configuration-mapping spaces; under additional hypotheses on the underlying manifold $M$, the stabilisation maps \eqref{eq:stabilisation-map} induce split-injections on the $E^2$ pages of the map of associated Serre spectral sequences. See Theorem \ref{thm:split-injectivity} for the precise statement. This relies on a detailed study of the monodromy action associated to the fibration \eqref{eq:forgetful-map}, which is carried out in the companion paper \cite{PalmerTillmann2020Pointpushingactions}.

\paragraph{Outline.}

The paper is organised as follows. Sections \ref{s:cmap} and \ref{s:cgamma} contain the precise definitions of configuration-mapping spaces (recalled from \cite{EllenbergVenkateshWesterland2012HomologicalstabilityHurwitz}) and configuration-section spaces (a natural generalisation that we introduce); several different classes of examples are also discussed in section \ref{s:cgamma}. The structure on configuration-section spaces that we need, including the stabilisation maps, is encapsulated in the statement that they form an \emph{$E_0$-module over an $E_{d-1}$-algebra}.
In section \ref{s:em-modules-over-en-algebras}, we first explain precisely what this means and then define this structure on configuration-section spaces in appropriate models (Proposition \ref{p:e0-modules}). Along the way, we take care to recall and relate several different flavours of Swiss cheese operads. In section \ref{s:monodromy} we then define the (up-to-homotopy) monodromy action of the fundamental group $\pi_1(C_k(\mathring{M}))$ coming from the forgetful map
\begin{equation}
\label{eq:forgetful-map}
\cgamma{k}{c,*}{M}{\xi} \too C_k(\mathring{M}).
\end{equation}
In section \ref{s:extension} we show that these actions fit together to define a \emph{monodromy functor}
\begin{equation}
\label{eq:monodromy-functor}
\cC(M) \too \hotop,
\end{equation}
where $\cC(M)$ is a certain \emph{braid category} on $M$ recalled in section \ref{s:braid-categories}. This allows us to use recent twisted homological stability results by Krannich~\cite{Krannich2019Homologicalstabilitytopological} in their full strength to prove Theorem \ref{tmain} in section \ref{s:polynomiality}: By a spectral sequence argument, it suffices to prove \emph{twisted} homological stability for ordinary configuration spaces with coefficients in the composition of the monodromy functor \eqref{eq:monodromy-functor} with homology in any fixed degree $q$; we prove in Proposition \ref{prop:polynomiality} that this composite functor is \emph{polynomial} of degree $q$.
In section \ref{s:extension}, relying on \cite{PalmerTillmann2020Pointpushingactions}, we discuss when a factorisation of the monodromy functor through an enhanced braid category $\cB_\sharp(M)$ is possible. This allows us to improve the homology stability range for coefficients in which $2$ is invertible, see Theorem \ref{thm-hom-stab-improved}.  It also allows us to prove the partial split-injectivity result mentioned above (Theorem \ref{thm:split-injectivity}).

\paragraph{Acknowledgements.}
The authors would like to thank Andrea Bianchi and the referee for their helpful comments on an earlier version of this paper.

\tableofcontents

\section{Configuration-mapping spaces}\label{s:cmap}

We begin by recalling the definition of \emph{configuration-mapping spaces} from \cite{EllenbergVenkateshWesterland2012HomologicalstabilityHurwitz}, which generalises the classical notion of \emph{Hurwitz spaces} \cite{Clebsch1872ZurTheorieRiemannschen,Hurwitz1891UeberRiemannscheFlaechen}. In fact, we slightly extend the definition of \cite{EllenbergVenkateshWesterland2012HomologicalstabilityHurwitz} by considering also \emph{non-orientable} manifolds (we will generalise this further to \emph{configuration-section spaces} in the next section).

Let $M$ be a smooth, compact, connected manifold with non-empty boundary of dimension $d\geq 2$ and let $X$ be a space. Also, let
\[
c \subseteq [S^{d-1},X]
\]
be a non-empty subset of the set of (unbased) homotopy classes of maps $S^{d-1} \to X$. There is a $\bZ/2$-action on the set $[S^{d-1},X]$ given by precomposition by a reflection of the sphere, and, in the case when $M$ is \emph{non-orientable}, we assume that $c$ is a union of orbits of this action (see Remark \ref{rmk:fixed-points} for why). If $M$ is \emph{orientable}, there is no condition on $c$.

\begin{rmk}
If $X$ is path-connected, the set $[S^{d-1},X]$ may be identified with the set of orbits $\pi_{d-1}(X)/\pi_1(X)$ for any choice of basepoint of $X$. When $d=2$ this is the set of conjugacy classes of $\pi_1(X)$.
\end{rmk}

\begin{defn}[\emph{Configuration-mapping spaces, I.}]
\label{d:cmspace}
For a positive integer $k$, the underlying set of the \emph{configuration-mapping space}
\[
\cmap{k}{c}{M}{X}
\]
is the set of pairs $(z,f)$, where $z$ is a subset of the interior $\mathring{M}$ of $M$ of cardinality $k$ and $f$ is a continuous map $M \smallsetminus z \to X$ with the following property: for any embedding $e \colon D^d \hookrightarrow M$ where $e(D^d) \cap z$ consists of a single point in the interior of $e(D^d)$, we have
\begin{equation}\label{eq:singularity-condition}
[f \circ e \circ i] \in c ,
\end{equation}
where $i$ denotes the inclusion $S^{d-1} \hookrightarrow D^d$. If $M$ is orientable, we fix a choice of orientation and only consider \emph{orientation-preserving} embeddings $e$ in the above condition.
\end{defn}

\begin{rmk}
\label{rmk:fixed-points}
The reason for assuming that $c$ is a union of orbits of the involution on $[S^{d-1},X]$, in the case when $M$ is non-orientable, is the following. In the definition of configuration-mapping spaces above, the set $c$ of homotopy classes of maps $S^{d-1} \to X$ is to be thought of as the set of permitted ``monodromies'' of the continuous map to $X$ that is defined on the complement of a configuration in $M$. These monodromies are detected by embeddings of discs surrounding individual points of the configuration. When $M$ is orientable, we fix an orientation and demand that these embeddings are orientation-preserving. However, if $M$ is non-orientable, we are unable to do this, and so we may only detect the monodromy \emph{up to} the involution given by an orientation reversing diffeomorphism of the disc. Thus we may only prescribe subsets of $[S^{d-1},X]_{\bZ/2}$, the set of orbits under this involution. Equivalently, we may prescribe subsets of $[S^{d-1},X]$ that are unions of orbits, which is how we have described it above. See also Example \ref{eg:trivial-bundle}, where this is explained from the perspective of \emph{configuration-section spaces}.
\end{rmk}

To topologise the set of Definition \ref{d:cmspace}, we will give a second definition of $\cmap{k}{c}{M}{X}$ that has a natural topology, and then prove that there is a natural bijection between the two definitions. To do this, we first recall some auxiliary definitions and results.

\begin{defn}[{\cite{Palais1960Localtrivialityof,Cerf1961Topologiedecertains}}]
If $G$ is a topological group with a continuous left-action on a space $X$, the action \emph{admits local sections} if each $x \in X$ has an open neighbourhood $U$ and a continuous map $\gamma \colon U \to G$ such that $\gamma(x').x = x'$ for each $x' \in U$.
\end{defn}

The utility of this definition is given by the following result.

\begin{prop}[{\cite[Theorem A]{Palais1960Localtrivialityof}}]
\label{p:fibre-bundle}
If $X$ and $Y$ are left $G$-spaces such that the action of $G$ on $Y$ admits local sections, and $f \colon X \to Y$ is $G$-equivariant, then $f$ is a fibre bundle.
\end{prop}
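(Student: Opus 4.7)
The plan is to build an explicit local trivialisation of $f$ around an arbitrary point $y \in Y$ using the local section of the $G$-action at $y$, and to verify it is a homeomorphism by a direct computation using $G$-equivariance of $f$.

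First, I would invoke the ``admits local sections'' hypothesis at $y$ to obtain an open neighbourhood $U \ni y$ in $Y$ and a continuous map $\gamma \colon U \to G$ with $\gamma(y') \cdot y = y'$ for every $y' \in U$. After replacing $\gamma$ by $y' \mapsto \gamma(y') \gamma(y)^{-1}$, I may further assume $\gamma(y) = e$, which is cosmetically convenient but not logically essential.

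Next, setting $F \coloneqq f^{-1}(y)$, I would define
\[
\phi \colon U \times F \too f^{-1}(U), \qquad \phi(u,x) = \gamma(u) \cdot x,
\]
together with a candidate inverse
\[
\psi \colon f^{-1}(U) \too U \times F, \qquad \psi(x') = \bigl(f(x'),\; \gamma(f(x'))^{-1} \cdot x'\bigr).
\]
Both maps are continuous from continuity of $\gamma$, $f$, inversion in $G$, and the action map $G \times X \to X$. Using equivariance, $f(\phi(u,x)) = \gamma(u) \cdot f(x) = \gamma(u) \cdot y = u$, so $\phi$ does land in $f^{-1}(U)$; similarly $f(\gamma(f(x'))^{-1} \cdot x') = \gamma(f(x'))^{-1} \cdot f(x') = y$, so the second coordinate of $\psi(x')$ lies in $F$. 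A direct calculation (again using equivariance of $f$) shows $\phi \circ \psi = \mathrm{id}$ and $\psi \circ \phi = \mathrm{id}$, so $\phi$ is a homeomorphism over $U$ commuting with the projections to $U$. Since $y$ was arbitrary and $Y$ is (path-)connected in the cases of interest (otherwise one gets a trivialisation over each connected component separately), this exhibits $f$ as a fibre bundle with fibre $F$.

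The only real subtlety is bookkeeping: one must consistently exploit the equivariance identity $f(g \cdot x) = g \cdot f(x)$ in every verification, and keep track of the fact that $\gamma(u)$ moves $y$ to $u$ (rather than the other way round) so that the correct inverse appears in the definition of $\psi$. Beyond that, the argument is formal and requires no further hypotheses on $G$, $X$ or $Y$.
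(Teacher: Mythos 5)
Your proof is correct: the local section $\gamma$ yields the standard local trivialisation $\phi(u,x)=\gamma(u)\cdot x$ with inverse $\psi(x')=\bigl(f(x'),\gamma(f(x'))^{-1}\cdot x'\bigr)$, and all the verifications via equivariance go through as you describe. The paper itself gives no proof of this proposition — it simply cites Palais's Theorem A — and your argument is precisely the standard one underlying that reference, so there is nothing to compare beyond noting that your connectedness caveat at the end is only needed if one insists on a single fibre type for the whole of $Y$; local triviality itself holds unconditionally.
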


\begin{defn}
For a smooth manifold-with-boundary $M$, we write $\mathrm{Diff}_\partial(M)$ for the topological group of self-diffeomorphisms of $M$ that restrict to the identity on $\partial M$, equipped with the subspace topology induced by the smooth Whitney topology on $C^\infty(M,M)$, the space of all smooth self-maps of $M$.

For a smooth manifold (without boundary) $M$, we write $\mathrm{Diff}_c(M)$ for the topological group of self-diffeomorphisms $\varphi$ of $M$ such that are \emph{compactly-supported}, meaning that $\{ p \in M \mid \varphi(p) \neq p \}$ is relatively compact in $M$. This is topologised as follows. For a compact subset $K$ of $M$, write $\mathrm{Diff}_K(M)$ for the topological group of self-diffeomorphisms $\varphi$ of $M$ such that $\{ p \in M \mid \varphi(p) \neq p \} \subseteq K$, equipped with the subspace topology induced by the Whitney topology on $C^\infty(M,M)$. Note that, as a set, $\mathrm{Diff}_c(M)$ is the union of $\mathrm{Diff}_K(M)$ over all choices of $K$. We define the topology of $\mathrm{Diff}_c(M)$ to be the colimit of the topologies of $\mathrm{Diff}_K(M)$ over all choices of $K$.
\end{defn}

\begin{lem}
\label{l:local-sections1}
If $M$ is a smooth manifold without boundary, the continuous left-action of $\mathrm{Diff}_c(M)$ on $C_k(\mathring{M})$ admits local sections.
\end{lem}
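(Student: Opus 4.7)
The plan is to construct explicit local sections via the classical trick of performing small translations inside pairwise disjoint coordinate charts. Given a configuration $z = \{z_1,\ldots,z_k\} \in C_k(\mathring{M})$, I would first fix smooth embeddings $\phi_i \colon \bR^d \hookrightarrow \mathring{M}$ with $\phi_i(0) = z_i$ whose images have pairwise disjoint closures inside $\mathring{M}$. The open neighbourhood $U \subseteq C_k(\mathring{M})$ of $z$ on which the section will be defined consists of those configurations $z'$ having exactly one point $\phi_i(v_i(z'))$ in each ball $\phi_i(B_1(0))$ and no points outside the union of these balls. The assignment $z' \mapsto (v_1(z'),\ldots,v_k(z'))$ is then continuous, with $\|v_i(z')\| < 1$.

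Next, fixing once and for all a smooth bump function $\rho \colon [0,\infty) \to [0,1]$ that equals $1$ on $[0,1]$ and $0$ on $[2,\infty)$, I would set $\gamma(z')$ to be the identity outside $\bigcup_i \phi_i(\overline{B_2(0)})$ and, inside each chart, to act by the explicit formula $\phi_i(x) \mapsto \phi_i(x + \rho(\|x\|)\,v_i(z'))$. After shrinking $U$ if necessary so that $\|v_i(z')\| \cdot \max|\rho'| < 1$, a short argument (computing the Jacobian via $\det(I + a b^\top) = 1 + b^\top a$ for local injectivity and a Lipschitz estimate for global injectivity) shows that this is genuinely a diffeomorphism of $\mathring{M}$, compactly supported inside the fixed set $K := \bigcup_i \phi_i(\overline{B_2(0)})$, and by construction sending $z_i$ to $\phi_i(v_i(z'))$ for each $i$, hence $z$ to $z'$.

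The only subtle point will be verifying continuity with respect to the colimit topology defining $\mathrm{Diff}_c(\mathring{M})$. Here the crucial observation is that every $\gamma(z')$ for $z' \in U$ is supported in the single fixed compact set $K$, so the map $\gamma$ factors through $\mathrm{Diff}_K(\mathring{M})$, which carries simply the subspace Whitney topology induced from $C^\infty(\mathring{M},\mathring{M})$. Continuity into this subspace then reduces to the observation that $\gamma(z')$ is defined by an explicit formula whose derivatives of all orders depend smoothly on the parameters $v_i(z')$, which in turn depend continuously on $z'$. This uniform-support feature is the only nontrivial aspect of the argument; everything else is a routine unpacking of the definitions.
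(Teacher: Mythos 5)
Your construction is correct, but it is not the route the paper takes. The paper's proof of Lemma \ref{l:local-sections1} is essentially a citation: Theorem B of Palais gives local sections for the action of the diffeomorphism group on the \emph{ordered} configuration space $F_k(\mathring{M})$, and one then descends to $C_k(\mathring{M})$ using the equivariance of the covering map $F_k(\mathring{M}) \to C_k(\mathring{M})$ (alternatively, the paper simply invokes Proposition 4.15 of an earlier paper of the first author). You instead build the local section by hand: pairwise disjoint charts around the points of $z$, a neighbourhood $U$ on which every configuration splits into one point per chart, and a compactly supported bump-function translation in each chart, with the Jacobian and Lipschitz estimates needed to guarantee a genuine diffeomorphism after shrinking $U$. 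Both arguments are standard and complete. Yours has the virtue of being self-contained and of making explicit that all the sections land in $\mathrm{Diff}_K(M)$ for one fixed compact set $K$, which disposes cleanly of the colimit-topology issue and is also the feature implicitly exploited in Lemma \ref{l:local-sections2}, where local sections supported in a ball are lifted to bundle automorphisms via a trivialisation and extended by the identity. The paper's route is shorter and defers all the analytic detail to Palais, at the cost of the small extra step of passing from ordered to unordered configurations.
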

\begin{proof}
Theorem B of \cite{Palais1960Localtrivialityof} implies, as a special case, that the action of $\mathrm{Diff}_\partial(M)$ on the \emph{ordered} configuration space $F_k(\mathring{M})$ admits local sections. Then one may use this, and the fact that the covering map $F_k(\mathring{M}) \to C_k(\mathring{M})$ is $\mathrm{Diff}_\partial(M)$-equivariant, to construct local sections for the action of $\mathrm{Diff}_\partial(M)$ on $C_k(\mathring{M})$. Alternatively, the statement follows directly from Proposition 4.15 of \cite{Palmer2018HomologicalstabilitymoduliI}.
\end{proof}

\begin{defn}[\emph{Configuration-mapping spaces, II.}]
\label{d:cmspace2}
Fix a subset $\hat{z} \subseteq \mathring{M}$ of cardinality $k$ and let $\mathrm{Map}_*^c(M \smallsetminus \hat{z},X)$ denote the space of continuous maps $M \smallsetminus \hat{z} \to X$ satisfying the condition \eqref{eq:singularity-condition}, equipped with the compact-open topology. Write $\mathrm{Diff}_\partial(M)$ for the group of diffeomorphisms of $M$ that fix a neighbourhood of $\partial M$, equipped with the smooth Whitney topology, and let $\mathrm{Diff}_\partial(M,\hat{z})$ denote the subgroup of diffeomorphisms that fix $\hat{z}$ as a subset. This acts (on the right) on $\mathrm{Map}^c(M \smallsetminus \hat{z},X)$ by precomposition, and on $\mathrm{Diff}_\partial(M)$ by right-multiplication, and we define:
\begin{equation}\label{eq:def-cmap}
\cmap{k}{c}{M}{X} \;\coloneqq\; \frac{\mathrm{Map}^c(M \smallsetminus \hat{z},X) \times \mathrm{Diff}_\partial(M)}{\mathrm{Diff}_\partial(M,\hat{z})}.
\end{equation}
\end{defn}

\begin{lem}
\label{l:cmap-equivalence-of-definitions}
There is a bijection between the set defined in Definition \ref{d:cmspace} and the space \eqref{eq:def-cmap}.
\end{lem}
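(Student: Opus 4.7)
The plan is to exhibit an explicit bijection $\Phi$ from the quotient \eqref{eq:def-cmap} to the set of Definition \ref{d:cmspace}, and then verify that it is well-defined, surjective and injective. Concretely, I would define
\[
\Phi\bigl([g,\varphi]\bigr) \;\coloneqq\; \bigl(\varphi(\hat{z}),\, g \circ \varphi^{-1}|_{M \smallsetminus \varphi(\hat{z})}\bigr).
\]
The first thing to check is that $\Phi$ is well-defined on orbits: for $\psi \in \mathrm{Diff}_\partial(M,\hat{z})$ one has $(\varphi\psi)(\hat{z}) = \varphi(\hat{z})$ since $\psi$ fixes $\hat{z}$ setwise, and $(g\psi) \circ (\varphi\psi)^{-1} = g \circ \varphi^{-1}$ on $M \smallsetminus \varphi(\hat{z})$. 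One also needs to confirm that the output $(z,f)$ satisfies the singularity condition \eqref{eq:singularity-condition}: given an embedding $e \colon D^d \hookrightarrow M$ meeting $z = \varphi(\hat{z})$ in a single interior point, the composition $\varphi^{-1}\circ e$ is an embedding of $D^d$ meeting $\hat{z}$ in a single interior point, so $[f \circ e \circ i] = [g \circ (\varphi^{-1}\circ e) \circ i] \in c$ because $g$ satisfies the condition at $\hat{z}$. In the orientable case, one uses that every element of $\mathrm{Diff}_\partial(M)$ is orientation-preserving (as it fixes an open set in a connected manifold), so orientation-preserving embeddings are mapped to orientation-preserving embeddings by $\varphi^{-1}$.

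For injectivity, suppose $\Phi([g_1,\varphi_1]) = \Phi([g_2,\varphi_2]) = (z,f)$. Then $\varphi_1(\hat{z}) = \varphi_2(\hat{z}) = z$, so $\psi \coloneqq \varphi_2^{-1}\circ\varphi_1$ lies in $\mathrm{Diff}_\partial(M,\hat{z})$; the equality $g_1\circ\varphi_1^{-1} = g_2\circ\varphi_2^{-1}$ on $M\smallsetminus z$ then rearranges to $g_1 = g_2\circ\psi$, while $\varphi_1 = \varphi_2\circ\psi$ by construction, giving $[g_1,\varphi_1] = [g_2,\varphi_2]$.

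For surjectivity, given $(z,f)$ in the set of Definition \ref{d:cmspace}, I would produce a diffeomorphism $\varphi \in \mathrm{Diff}_\partial(M)$ with $\varphi(\hat{z}) = z$ and then set $g \coloneqq f \circ \varphi|_{M\smallsetminus\hat{z}}$; the singularity condition for $g$ follows from the same change-of-embedding argument as above applied in reverse. The singularity condition for $(z,f)$ only required the homotopy class, so it is inherited by $g$ regardless of which $\varphi$ is chosen.

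The main (and only non-formal) obstacle is the existence of such a $\varphi$, i.e.\ the fact that $\mathrm{Diff}_\partial(M)$ acts transitively on the $k$-element subsets of $\mathring{M}$. This is standard: since $M$ is connected with non-empty boundary, $\mathring{M}$ is connected, hence $F_k(\mathring{M})$ is connected for $k \geq 1$ only when $d \geq 2$ (which is our hypothesis); so any ordered $k$-tuple in $\mathring{M}$ can be joined to any other by a path, which by the parametrised isotopy extension theorem lifts to an ambient isotopy through $\mathrm{Diff}_\partial(M)$. Taking $k$-tuple orderings of $\hat{z}$ and $z$ in turn, this provides a $\varphi$ with $\varphi(\hat{z}) = z$. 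Equivalently, and more cleanly, one can invoke Lemma \ref{l:local-sections1} (applied to the $\mathrm{Diff}_\partial(M)$-action on $C_k(\mathring{M})$ via extending by the identity on a collar of the boundary) together with connectedness of $C_k(\mathring{M})$ to deduce transitivity. This completes the bijection.
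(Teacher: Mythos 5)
Your proof is correct, and it takes a more hands-on route than the paper's. You and the paper use the same underlying correspondence $[f,\varphi]\mapsto(\varphi(\hat{z}),f\circ\varphi^{-1})$, but the paper does not verify injectivity and surjectivity directly: it first shows that $p\colon[f,\varphi]\mapsto\varphi(\hat{z})$ is $\mathrm{Diff}_\partial(M)$-equivariant and invokes Palais's theorem (Proposition \ref{p:fibre-bundle}, via the local sections of Lemma \ref{l:local-sections1}) to conclude that $p$ is a fibre bundle over $C_k(\mathring{M})$, and then identifies each fibre $p^{-1}(z)$ with $\mathrm{Map}^c(M\smallsetminus z,X)$, reading off the bijection fibrewise. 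That packaging buys more than the set-level statement — the fibre bundle \eqref{eq:fibre-bundle3} is exactly what Definition \ref{def:canonical-fibration} needs later — whereas your argument establishes only the bijection of sets, which is all the lemma claims. In exchange, your proof is elementary and self-contained modulo one input: transitivity of $\mathrm{Diff}_\partial(M)$ on $C_k(\mathring{M})$, which you correctly reduce to connectedness of $F_k(\mathring{M})$ (using $d\geq 2$) plus isotopy extension, or to the open-orbits argument from local sections; the paper needs the same fact implicitly when it picks "any diffeomorphism taking $\hat{z}$ to $z$" in \eqref{eq:fibre-of-pi}, and proves the analogous transitivity explicitly only in the configuration-section setting (Lemma \ref{l:cgamma-equivalence-of-definitions}). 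Your extra care with the singularity condition \eqref{eq:singularity-condition} under change of embedding, and with the orientation-preserving requirement (elements of $\mathrm{Diff}_\partial(M)$ fix a neighbourhood of $\partial M$ pointwise, hence preserve orientation), fills in details the paper leaves tacit. One phrasing slip worth fixing: "$F_k(\mathring{M})$ is connected for $k\geq 1$ only when $d\geq 2$" should read "since $d\geq 2$, $F_k(\mathring{M})$ is connected"; the implication you need goes that way.
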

\begin{proof}
Consider the map
\begin{equation}\label{eq:equivariant-map}
p \colon \cmap{k}{c}{M}{X} \longrightarrow C_k(\mathring{M})
\end{equation}
given by $[f,\varphi] \mapsto \varphi(\hat{z})$. There is a continuous left-action of $\mathrm{Diff}_\partial(M)$ on $\cmap{k}{c}{M}{X}$ induced by its action on itself by left-multiplication and \eqref{eq:def-cmap}, and on $C_k(\mathring{M})$ given by sending a configuration to its image under a diffeomorphism. The map \eqref{eq:equivariant-map} is equivariant with respect to these actions. The action of $\mathrm{Diff}_\partial(M)$ on $C_k(\mathring{M})$ admits local sections by Lemma \ref{l:local-sections1} and hence \eqref{eq:equivariant-map} is a fibre bundle by Proposition \ref{p:fibre-bundle}.

The fibre of \eqref{eq:equivariant-map} over a configuration $z \in C_k(\mathring{M})$ is
\begin{equation}
\label{eq:fibre-of-pi}
p^{-1}(z) \;=\; \frac{\mathrm{Map}^c(M \smallsetminus \hat{z},X) \times \mathrm{Diff}_\partial(M,\hat{z}) \cdot \varphi}{\mathrm{Diff}_\partial(M,\hat{z})},
\end{equation}
where $\mathrm{Diff}_\partial(M,\hat{z}) \cdot \varphi$ denotes the coset of $\varphi \in \mathrm{Diff}_\partial(M)$ under the action of $\mathrm{Diff}_\partial(M,\hat{z})$, where $\varphi$ is any diffeomorphism taking $\hat{z}$ to $z$. There is a canonical identification of \eqref{eq:fibre-of-pi} with $\mathrm{Map}^c(M \smallsetminus z,X)$ via $[f,\varphi] \mapsto f \circ \varphi^{-1}$. Hence a point of $\cmap{k}{c}{M}{X}$ may be specified by its image under \eqref{eq:equivariant-map}, namely an unordered configuration $z \in C_k(\mathring{M})$, together with an element of the fibre $p^{-1}(z)$, which is a continuous map $M \smallsetminus z \to X$ satisfying condition \eqref{eq:singularity-condition}. This gives a natural bijection between the set $\cmap{k}{c}{M}{X}$ defined in Definition \ref{d:cmspace}, and the formal definition \eqref{eq:def-cmap}.
\end{proof}

\begin{defn}[\emph{Configuration-mapping spaces with a boundary condition.}]
\label{d:cmspaceboundary}
If we fix a subset $D \subseteq \partial M$ and a basepoint $* \in X$, we may define
\[
\cmap{k}{c,D}{M}{X}
\]
to be the subspace of $\cmap{k}{c}{M}{X}$ consisting of pairs $(z,f)$ such that $f(p)=*$ for all $p \in D$. Equivalently (via the proof of Lemma \ref{l:cmap-equivalence-of-definitions}), we replace $\mathrm{Map}^c(M \smallsetminus \hat{z},X)$ in \eqref{eq:def-cmap} with its subspace $\mathrm{Map}^c((M \smallsetminus \hat{z},D),(X,*))$ of maps taking $D$ to $\{*\}$. Typically, we will take $D = D^{d-1} \subseteq \partial M$ to be an embedded disc.
\end{defn}

\begin{defn}[\emph{The associated fibre bundle.}]
\label{def:canonical-fibration}
From Definition \ref{d:cmspace2} and the proof of Lemma \ref{l:cmap-equivalence-of-definitions}, we have a fibre bundle
\begin{equation}\label{eq:fibre-bundle3}
p \colon \cmap{k}{c}{M}{X} \longrightarrow C_k(\mathring{M})
\end{equation}
whose fibre over a configuration $z \in C_k(\mathring{M})$ is the space of maps $M \smallsetminus z \to X$ satisfying condition \eqref{eq:singularity-condition}, and whose total space $\cmap{k}{c}{M}{X}$ is the \emph{configuration-mapping space}. For $D \subseteq \partial M$ and \emph{based} spaces $X$, there are also restricted versions of the configuration-mapping space, from Definition \ref{d:cmspaceboundary},
\[
\cmap{k}{c,D}{M}{X} \subset \cmap{k}{c}{M}{X},
\]
corresponding to restricting each fibre of \eqref{eq:fibre-bundle3} to the space of maps of pairs $(M \smallsetminus z,D) \to (X,*)$ satisfying condition \eqref{eq:singularity-condition}:
\begin{equation}\label{eq:fibre-bundle4}
p \colon \cmap{k}{c,D}{M}{X} \longrightarrow C_k(\mathring{M}),
\end{equation}
with $p^{-1}(z) = \mathrm{Map}^c((M \smallsetminus z,D),(X,*))$.
\end{defn}

\begin{rmk}
\label{rmk:path-components1}
Condition \eqref{eq:singularity-condition} depends only on the homotopy class of the map $f$, so the subspace $\mathrm{Map}^c(M \smallsetminus z,X)$ of $\mathrm{Map}(M \smallsetminus z,X)$ is a union of path-components (a similar statement also holds for the version with boundary condition on $D$).
\end{rmk}

\begin{rmk}
When $X$ is a point (so necessarily $c = [S^{d-1},X] = \{ * \}$), we have, by definition \eqref{eq:def-cmap}, $\cmap{k}{c}{M}{*} = \mathrm{Diff}_\partial(M) / \mathrm{Diff}_\partial(M,\hat{z})$. In this case, by \eqref{eq:fibre-of-pi}, each fibre of the fibre bundle \eqref{eq:equivariant-map} is a single point (here we are essentially using the fact that $\mathrm{Diff}_\partial(M)$ acts transitively on $C_k(\mathring{M})$), so \eqref{eq:equivariant-map} is a homeomorphism (since fibre bundles are open maps):
\[
\cmap{k}{c}{M}{*} = \mathrm{Diff}_\partial(M) / \mathrm{Diff}_\partial(M,\hat{z}) \cong C_k(\mathring{M}),
\]
identifying the configuration-mapping space with the usual (unordered) configuration space in this case.
\end{rmk}

\begin{rmk}
\label{rmk:Hurwitz1}
When $M = D^2$ is the $2$-disc with $D = I \subseteq \partial D^2$ an embedded interval in its boundary and $X=BG$ is the classifying space of a discrete group $G$ (so $c$ is a set of conjugacy classes of $G$), we have a homotopy equivalence:
\[
\cmap{k}{c,I}{D^2}{BG} \simeq \mathrm{Hur}_{G,k}^c,
\]
(see \cite[\S 5.8]{EllenbergVenkateshWesterland2012HomologicalstabilityHurwitz}), where $\mathrm{Hur}_{G,k}^c$ is the corresponding \emph{Hurwitz space}, the moduli space (topologised appropriately) of the data (up to an appropriate notion of equivalence) consisting of $(S,s_0,\nu,i)$, where:
\begin{itemizeb}
\item $S$ is a Riemann surface with basepoint $s_0 \in \partial S$
\item $\nu \colon S \to D^2$ is a based covering map, branched at $k$ point in the interior of $D^2$,
\item $i \colon G \hookrightarrow \mathrm{Deck}(\nu)$ is an embedding of groups,
\end{itemizeb}
such that $G$ acts transitively on the generic fibres of $\nu$ and the monodromy of $\nu$ around each of its branch points lies in one of the conjugacy classes in $c$. See also Example \ref{eg:Hurwitz2} and in particular the weak equivalences \eqref{eq:cmap-and-hur}.
\end{rmk}

\section{Configuration-section spaces}\label{s:cgamma}

We now explain how to generalise this notion to \emph{configuration-section spaces}, where we consider sections of a bundle over the complement of a configuration, instead of just a map to a fixed space. This includes, for example, moduli spaces of configurations whose complement is equipped with a \emph{tangential structure} or with a \emph{tuple of linearly independent vector fields} (which need not extend over the whole manifold).

\begin{defn}[\emph{Configuration-section spaces, without prescribed monodromy, I.}]
\label{d:csection1}
Fix a smooth, connected $d$-manifold $M$ (possibly with boundary) and a fibre bundle $\xi \colon E \to M$. For a non-negative integer $k$, the (unrestricted) \emph{configuration-section space}, as a set, is given by
\begin{equation}
\label{eq:def-csect-set}
\cgamma{k}{}{M}{\xi} = \{ (z,s) \mid z \subseteq \mathring{M} \text{ subset of cardinality } k, \; s \colon M \smallsetminus z \to E \text{ section of } \xi|_{M \smallsetminus z} \}.
\end{equation}
\end{defn}

We will topologise this, and construct the associated fibre bundle over $C_k(\mathring{M})$, in a similar way as for configuration-mapping spaces in \S\ref{s:cmap}.

\begin{defn}
For a smooth manifold-with-boundary $M$ and fibre bundle $\xi \colon E \to M$, the automorphism group $\mathrm{Aut}_\partial(\xi)$ is the subgroup
\[
\mathrm{Aut}_\partial(\xi) \;\leq\; \mathrm{Diff}_\partial(M) \times \mathrm{Homeo}(E)
\]
of those pairs $(\varphi,g)$ such that $\xi \circ g = \varphi \circ \xi$. For a subset $z \subseteq \mathring{M}$, we write $\mathrm{Aut}_\partial(\xi,z)$ for the subgroup of $(\varphi,g)$ such that $\varphi(z)=z$. Similarly, for subsets $z,z' \subseteq \mathring{M}$ of the same cardinality, we write $\mathrm{Aut}_\partial(\xi,z \mapsto z')$ for the subgroup of $(\varphi,g)$ such that $\varphi(z)=z'$. If $\partial M = \varnothing$, we write $\mathrm{Aut}_c(\xi)$ (resp.\ $\mathrm{Aut}_c(\xi,z)$ and $\mathrm{Aut}_c(\xi,z \mapsto z')$) if we replace $\mathrm{Diff}_\partial(M)$ with $\mathrm{Diff}_c(M)$ in the definition.
\end{defn}

\begin{rmk}
An element $(\varphi,g) \in \mathrm{Aut}_\partial(\xi)$ is determined by its second component $g \in \mathrm{Homeo}(E)$, since a self-homeomorphism of the total space $E$ can descend to a self-diffeomorphism of the base manifold $M$ in at most one way. Hence there is a continuous injection $\mathrm{Aut}_\partial(\xi) \hookrightarrow \mathrm{Homeo}(E)$, although the topology on $\mathrm{Aut}_\partial(\xi)$ is generally \emph{finer} than the subspace topology induced by this injection.
\end{rmk}

\begin{lem}
\label{l:local-sections2}
If $M$ is a smooth manifold-with-boundary and $\xi \colon E \to M$ is a fibre bundle, the continuous left-action of $\mathrm{Aut}_\partial(\xi)$ on $C_k(\mathring{M})$ given by $(\varphi,g) \colon z \mapsto \varphi(z)$ admits local sections.
\end{lem}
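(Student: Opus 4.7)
The plan is to bootstrap from Lemma~\ref{l:local-sections1}. Given $z_0 \in C_k(\mathring{M})$, I will produce a local section of the orbit map at $z_0$ taking values in $\mathrm{Aut}_\partial(\xi)$ by first choosing a local section for the $\mathrm{Diff}_\partial(M)$-action whose diffeomorphisms have \emph{small support}, and then lifting these diffeomorphisms to bundle automorphisms by hand using local trivialisations of $\xi$.

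First I would choose pairwise disjoint coordinate discs $D_1,\dots,D_k \subset \mathring{M}$ centred at the points of $z_0$, small enough that $\xi$ admits a trivialisation $\psi_i \colon \xi^{-1}(D_i) \xrightarrow{\cong} D_i \times F_i$ over each $D_i$. Inspecting the construction used to prove \cite[Thm.~B]{Palais1960Localtrivialityof} (or alternatively the argument behind Proposition~4.15 of \cite{Palmer2018HomologicalstabilitymoduliI}), one obtains a local section $\sigma \colon U \to \mathrm{Diff}_\partial(M)$ of the orbit map at $z_0$, defined on an open neighbourhood $U$ of $z_0$ in $C_k(\mathring{M})$, with the additional property that every $\sigma(z)$ is supported inside $D_1 \sqcup \cdots \sqcup D_k$ and restricts on each $D_i$ to a diffeomorphism fixing $\partial D_i$ pointwise.

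I would then define $\tilde{\sigma}(z) = (\sigma(z), g_z)$, where $g_z$ is the identity on $\xi^{-1}(M \smallsetminus \bigsqcup_i D_i)$ and is given, on each $\xi^{-1}(D_i)$, by $\psi_i^{-1} \circ (\sigma(z)|_{D_i} \times \mathrm{id}_{F_i}) \circ \psi_i$. Since $\sigma(z)$ fixes $\partial D_i$ pointwise, these two definitions agree on the overlap, yielding a well-defined self-homeomorphism of $E$ covering $\sigma(z)$; hence $\tilde{\sigma}(z) \in \mathrm{Aut}_\partial(\xi)$ and $\tilde{\sigma}(z)$ sends $z_0$ to $z$. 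Continuity of $\tilde{\sigma}$ in the first factor is Lemma~\ref{l:local-sections1}; in the second factor, $g_z$ is supported in the fixed compact set $\xi^{-1}(\bigsqcup_i \overline{D_i})$ and depends on $z$ only through the continuously-varying restrictions $\sigma(z)|_{D_i}$.

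The main obstacle I expect is the first step: verifying that the local section $\sigma$ may be chosen to have the small-support property above. This is how Palais's construction (and Palmer's alternative) naturally proceeds — via integrating compactly supported time-dependent vector fields on a small neighbourhood of the configuration — but since the bare statement of Palais's Theorem~B asserts only the existence of \emph{some} local section, a careful inspection of the construction is needed to ensure that the support can indeed be confined to the prescribed discs.
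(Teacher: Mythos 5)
Your proposal is correct and is essentially the paper's argument: confine the support of the local section of the $\mathrm{Diff}$-action to a region over which $\xi$ trivialises, lift through the trivialisation, and extend by the identity (the paper uses a single embedded codimension-zero ball $B$ with $z \subseteq \mathring{B}$ rather than $k$ small discs). The obstacle you flag at the end dissolves without inspecting Palais's construction: simply apply Lemma \ref{l:local-sections1} to the open ball $\mathring{B}$ itself (or, in your set-up, its $k=1$ case to each $\mathring{D}_i$ separately) — since that lemma produces a section valued in $\mathrm{Diff}_c$ of the ball, the diffeomorphisms are automatically compactly supported inside it and extend by the identity to all of $M$.
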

\begin{proof}
Let $z \in C_k(\mathring{M})$ and choose an embedded codimension-zero ball $B \subseteq \mathring{M}$ such that $z \subseteq \mathring{B}$. By Lemma \ref{l:local-sections1}, there is an open neighbourhood $U$ of $z$ in $C_k(\mathring{B})$ and a continuous map $\gamma \colon U \to \mathrm{Diff}_c(\mathring{B})$ such that $\gamma(z').z = z'$ for all $z' \in U$. Choose a trivialisation of $\xi|_{\mathring{B}}$. This induces a continuous group homomorphism $\mathrm{Diff}_c(\mathring{B}) \to \mathrm{Aut}_c(\xi|_{\mathring{B}})$. Now extending both the diffeomorphism of the underlying manifold and the homeomorphism of the total space by the identity on $M \smallsetminus \mathring{B}$ and on $\xi^{-1}(M \smallsetminus \mathring{B})$ defines a continuous group homomorphism $\mathrm{Aut}_c(\xi|_{\mathring{B}}) \to \mathrm{Aut}_\partial(\xi)$. Composing both of these with $\gamma$ completes the construction of local sections for the action of $\mathrm{Aut}_\partial(\xi)$ on $C_k(\mathring{M})$.
\end{proof}

\begin{defn}[\emph{Configuration-section spaces, without prescribed monodromy, II.}]
\label{d:csection-topologise}
Fix a subset $\hat{z} \subseteq \mathring{M}$ of cardinality $k$, and define
\begin{equation}\label{eq:def-csect}
\cgamma{k}{}{M}{\xi} \coloneqq \frac{\Gamma(M \smallsetminus \hat{z},\xi) \times \mathrm{Aut}_\partial(\xi)}{\mathrm{Aut}_\partial(\xi,\hat{z})},
\end{equation}
where, for a subspace $S \subseteq M$, we write $\Gamma(S,\xi) = \{ s \in \mathrm{Map}(S,E) \mid \xi\circ s = \mathrm{incl} \}$. The right-action of $\mathrm{Aut}_\partial(\xi,\hat{z})$ on $\Gamma(M \smallsetminus \hat{z},\xi)$ is given by $(\varphi,g) \colon s \mapsto g^{-1} \circ s \circ \varphi$.
\end{defn}

\begin{lem}
\label{l:cgamma-equivalence-of-definitions}
There is a bijection between the set defined in Definition \ref{d:csection1} and the space \eqref{eq:def-csect}.
\end{lem}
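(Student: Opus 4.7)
The plan is to mirror the argument used for Lemma \ref{l:cmap-equivalence-of-definitions} in the previous section, replacing $\mathrm{Diff}_\partial(M)$ by $\mathrm{Aut}_\partial(\xi)$ and the mapping space $\mathrm{Map}^c(M \smallsetminus \hat z, X)$ by the section space $\Gamma(M \smallsetminus \hat z, \xi)$, and to invoke Lemma \ref{l:local-sections2} in place of Lemma \ref{l:local-sections1}.

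First, I would define a map
\[
p \colon \cgamma{k}{}{M}{\xi} \too C_k(\mathring{M}), \qquad [s,(\varphi,g)] \mapsto \varphi(\hat{z}),
\]
and check that it is well-defined: if $(\psi,h) \in \mathrm{Aut}_\partial(\xi,\hat{z})$, then $\varphi \psi (\hat z) = \varphi(\hat z)$ since $\psi(\hat z) = \hat z$. The group $\mathrm{Aut}_\partial(\xi)$ acts on $\cgamma{k}{}{M}{\xi}$ by left-multiplication in the second factor and on $C_k(\mathring M)$ via the forgetful homomorphism $(\varphi,g) \mapsto \varphi$ and the usual action by image. With respect to these actions, $p$ is equivariant. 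By Lemma \ref{l:local-sections2} the action on $C_k(\mathring{M})$ admits local sections, so Proposition \ref{p:fibre-bundle} shows that $p$ is a fibre bundle (in particular continuous and surjective, since $M$ is connected so $\mathrm{Diff}_\partial(M)$ acts transitively on $C_k(\mathring M)$).

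Next, I would identify the fibres. For $z \in C_k(\mathring M)$, pick any $(\varphi_0,g_0) \in \mathrm{Aut}_\partial(\xi)$ with $\varphi_0(\hat z) = z$; then the preimage is the image of $\Gamma(M \smallsetminus \hat z,\xi) \times (\mathrm{Aut}_\partial(\xi,\hat z)\cdot(\varphi_0,g_0))$ in the quotient. Since the acting group is exactly the stabiliser of $\hat z$, this image is canonically homeomorphic to $\Gamma(M \smallsetminus \hat z, \xi)$, and the assignment $[s,(\varphi,g)] \mapsto g \circ s \circ \varphi^{-1}$ transports this into $\Gamma(M \smallsetminus z, \xi)$, giving a canonical identification
\[
p^{-1}(z) \;\cong\; \Gamma(M \smallsetminus z,\xi).
\]
Hence a point of \eqref{eq:def-csect} is uniquely specified by its image $z \in C_k(\mathring M)$ together with a section of $\xi$ over $M \smallsetminus z$, which is precisely a point of the set \eqref{eq:def-csect-set}. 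The resulting bijection is natural (independent of the choices of $\hat z$ and $(\varphi_0,g_0)$) because $g \circ s \circ \varphi^{-1}$ is $\mathrm{Aut}_\partial(\xi,\hat z)$-invariant by the very definition of the right-action in Definition \ref{d:csection-topologise}.

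The main obstacle here is essentially bookkeeping: one must carefully check that the identification of the fibre is independent of the chosen representative $(\varphi_0,g_0)$ and compatible with the quotient by $\mathrm{Aut}_\partial(\xi,\hat z)$. The crucial input is that $(\varphi,g) \in \mathrm{Aut}_\partial(\xi,\hat z)$ acts on a section $s$ by $s \mapsto g^{-1} \circ s \circ \varphi$, which is precisely the condition ensuring that the assignment $[s,(\varphi,g)] \mapsto g \circ s \circ \varphi^{-1}$ descends to a well-defined map on the quotient. Everything else — the fibre-bundle property, surjectivity of $p$, and the recovery of the pair $(z,s)$ from a point of \eqref{eq:def-csect} — then follows exactly as in the proof of Lemma \ref{l:cmap-equivalence-of-definitions}.
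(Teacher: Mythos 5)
Your proposal is correct and follows essentially the same route as the paper: the same equivariant map $p$, the same appeal to Lemma \ref{l:local-sections2} and Proposition \ref{p:fibre-bundle}, and the same identification of the fibre via $[s,(\varphi,g)] \mapsto g \circ s \circ \varphi^{-1}$. The only point where the paper is more careful is the transitivity statement: what is needed is transitivity of $\mathrm{Aut}_\partial(\xi)$ (not merely $\mathrm{Diff}_\partial(M)$) on $C_k(\mathring{M})$, which the paper justifies by choosing a ball containing both configurations, trivialising $\xi$ over it, lifting a compactly supported diffeomorphism to a bundle automorphism, and extending by the identity.
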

\begin{proof}
There is a continuous map
\begin{equation}\label{eq:fibre-bundle-section}
p \colon \cgamma{k}{}{M}{\xi} \longrightarrow C_k(\mathring{M})
\end{equation}
given by $[s,(\varphi,g)] \mapsto \varphi(z)$. This map is equivariant with respect to the continuous left-actions of $\mathrm{Aut}_\partial(\xi)$, and the action of $\mathrm{Aut}_\partial(\xi)$ on $C_k(\mathring{M})$ admits local sections by Lemma \ref{l:local-sections2}. Thus, by Proposition \ref{p:fibre-bundle}, the map \eqref{eq:fibre-bundle-section} is a fibre bundle. The fibre of \eqref{eq:fibre-bundle-section} over $z \in C_k(\mathring{M})$ is
\begin{equation}
\label{eq:fibre-of-pi2}
p^{-1}(z) \;=\; \frac{\Gamma(M \smallsetminus \hat{z},\xi) \times \mathrm{Aut}_\partial(\xi,\hat{z} \mapsto z)}{\mathrm{Aut}_\partial(\xi,\hat{z})}.
\end{equation}
Note that $\mathrm{Aut}_\partial(\xi)$ acts \emph{transitively} on $C_k(\mathring{M})$. (This can be seen as follows. Let $z,z' \in C_k(\mathring{M})$ and choose an embedded codimension-zero ball $B \subseteq \mathring{M}$ such that $z \cup z' \subseteq \mathring{B}$. Since $\mathrm{Diff}_c(\mathring{B})$ acts transitively on $C_k(\mathring{B})$, we may find a diffeomorphism $\varphi$ of $\mathring{B}$ such that $\varphi(z)=z'$, lift it to an automorphism of $\xi|_{\mathring{B}}$ by a choice of trivialisation of $\xi|_{\mathring{B}}$ and then extend it by the identity to obtain an element of $\mathrm{Aut}_\partial(\xi)$ sending $z$ to $z'$.) Thus the subspace $\mathrm{Aut}_\partial(\xi,\hat{z} \mapsto z)$ is a coset of $\mathrm{Aut}_\partial(\xi,\hat{z})$ in $\mathrm{Aut}_\partial(\xi)$, and may be identified with $\Gamma(M \smallsetminus z,\xi)$ via $[s,(\varphi,g)] \mapsto g \circ s \circ \varphi^{-1}$. Thus there is a natural bijection between the set defined in Definition \ref{d:csection1} and the formal definition \eqref{eq:def-csect}.
\end{proof}

\begin{defn}[\emph{The associated fibre bundle.}]
\label{def:canonical-fibration2}
From Definition \ref{d:csection-topologise} and the proof of Lemma \ref{l:cgamma-equivalence-of-definitions}, we have a fibre bundle
\begin{equation}
\label{eq:fibre-bundle6}
p \colon \cgamma{k}{}{M}{\xi} \longrightarrow C_k(\mathring{M})
\end{equation}
whose fibre over a configuration $z \in C_k(\mathring{M})$ is the space of sections $\Gamma(M \smallsetminus z,\xi)$.
\end{defn}

\begin{defn}[\emph{Boundary conditions.}]
\label{d:cgamma-boundary}
If we fix a subset $D \subseteq \partial M$ and a section $s_D \in \Gamma(D,\xi)$, we have a restricted version of the configuration-section space:
\begin{equation}
\label{eq:def-csect-restricted}
\cgamma{k}{D}{M}{\xi} = \{ (z,s) \in \cgamma{k}{}{M}{\xi} \mid s|_D = s_D \}.
\end{equation}
Moreover, since $D$ is contained in the boundary of $M$, the subspace $\cgamma{k}{D}{M}{\xi} \subseteq \cgamma{k}{}{M}{\xi}$ is invariant under the action of $\mathrm{Aut}_\partial(\xi)$, so Proposition \ref{p:fibre-bundle} and Lemma \ref{l:local-sections2} imply that the restriction
\begin{equation}
\label{eq:fibre-bundle7}
p \colon \cgamma{k}{D}{M}{\xi} \longrightarrow C_k(\mathring{M})
\end{equation}
of the map \eqref{eq:fibre-bundle-section} is a fibre bundle. The fibre $p^{-1}(z)$ over a configuration $z \in C_k(\mathring{M})$ is the space of sections $\{ s \in \Gamma(M \smallsetminus z,\xi) \mid s|_D = s_D \}$ (generalising Definition \ref{def:canonical-fibration2}, which corresponds to $D = \varnothing$).
\end{defn}

We now wish to define \emph{configuration-section spaces with prescribed monodromy}
\[
\cgamma{k}{c}{M}{\xi} \subseteq \cgamma{k}{}{M}{\xi}.
\]
In other words, we wish to specify a certain ``local behaviour'' $c$ of the section $s \in \Gamma(M \smallsetminus z,\xi)$ near the ``singularities'' $z \subseteq \mathring{M}$. To do this, we first construct a covering space over the interior $\mathring{M}$ of $M$ depending on the fibre bundle $\xi \colon E \to M$.

\begin{defn}[\emph{The covering space of local sections of $\xi$, informal.}]
Informally, the covering space
\[
\eta(\xi) \colon \Sigma(\xi) \too \mathring{M}
\]
is defined by prescribing that its fibre $\eta(\xi)^{-1}(p)$ over a point $p$ in the interior of $M$ consists of the set of all \emph{germs} of sections of $\xi$ defined on a small punctured-disc neighbourhood of $p$ in $M$.
\end{defn}

\begin{defn}[\emph{The covering space of local sections of $\xi$, formal.}]
\label{d:local-sections}
Fix a point $p \in \mathring{M}$. A \emph{local section near $p$ of $\xi$} is a pair $(B,\sigma)$ consisting of a subset $B \subseteq M$ homeomorphic to a $d$-dimensional ball, containing $p$ in its interior, together with a section $\sigma$ of $\xi|_{\partial B}$. Let $\sim$ be the equivalence relation on such pairs generated by $(B,\sigma) \sim (B',\sigma')$ if $B$ is contained in the interior of $B'$ and the section $\sigma \sqcup \sigma'$ defined on $\partial B \sqcup \partial B'$ extends to a section over $B' \smallsetminus \mathrm{int}(B)$. Write $[B,\sigma]_p$ for the equivalence class containing $(B,\sigma)$.

Let $\Sigma(\xi)$ be the set of all pairs $(p,[B,\sigma]_p)$ where $p \in \mathring{M}$ and $[B,\sigma]_p$ is an equivalence class of local sections near $p$ of $\xi$. We define a topology on $\Sigma(\xi)$ as follows. Let $(p,[B,\sigma]_p) \in \Sigma(\xi)$ and choose a representative $(B,\sigma)$ for the equivalence class $[B,\sigma]_p$. Define
\[
\cN_{p,B,\sigma} = \{ (q,[B,\sigma]_q) \mid q \in \mathrm{int}(B) \} \subseteq \Sigma(\xi).
\]
Then one may check that the collection $\cN$ of the sets $\cN_{p,B,\sigma}$ for all $(p,[B,\sigma]_p) \in \Sigma(\xi)$ is a basis for a topology $\cT$ on $\Sigma(\xi)$ such that the map
\[
\eta(\xi) \colon \Sigma(\xi) \longrightarrow \mathring{M}
\]
given by $(p,[B,\sigma]) \mapsto p$ is a covering map.
\end{defn}

\begin{rmk}
Although the basis $\cN$ depends on a choice, for each point $(p,[B,\sigma]_p) \in \Sigma(\xi)$, of a representative $(B,\sigma)$ of the equivalence class $[B,\sigma]_p$, the topology $\cT$ that it generates does not depend on these choices.
\end{rmk}

\begin{rmk}
Over a point $p \in \mathring{M}$, the fibre of $\eta(\xi) \colon \Sigma(\xi) \to \mathring{M}$ may be identified with the set of (unbased) homotopy classes of maps $[S^{d-1},F]$, where $F = \xi^{-1}(p)$. This identification is canonical once we have chosen a local orientation of $\mathring{M}$ at $p$.
\end{rmk}

\begin{defn}
\label{s:singularity}
A \emph{singularity condition} for $\xi \colon E \to M$ is a set of components of the space $\Sigma(\xi)$.
\end{defn}

In order to define configuration-section spaces with prescribed singularity conditions, we will use the following construction.

\begin{construction}
\label{construction-local-section}
There is a continuous (hence locally-constant) map
\begin{equation}
\label{eq:local-section}
\mathrm{loc}_\xi \colon \cgamma{k}{}{M}{\xi} \longrightarrow SP_k(\pi_0(\Sigma(\xi)))
\end{equation}
to the set (with the discrete topology) of unordered $k$-tuples of components of $\Sigma(\xi)$, which records the local behaviour of a given configuration-section $(z,s)$ in a punctured neighbourhood of each $p \in z$. Formally, this is defined as follows: for each $p \in z$, choose a codimension-$0$ ball $B \subseteq M$ containing $p$ in its interior and disjoint from $z \smallsetminus \{p\}$, and let $U_{(s,p)}$ be the component of $\Sigma(\xi)$ containing the point $(p,[B,s|_{\partial B}]_p) \in \Sigma(\xi)$. Then we set
\[
\mathrm{loc}_\xi(z,s) = \{ U_{(s,p)} \mid p \in z \},
\]
where the right-hand side is viewed as a multiset.

The component $U_{(s,p)} \in \pi_0(\Sigma(\xi))$ is referred to as the \emph{charge} of the particle $p$ in the field $s$.
\end{construction}

\begin{rmk}
It may be tempting to think that a configuration-section $(z,s)$ determines a \emph{section} of the covering $\Sigma(\xi) \to \mathring{M}$, and hence that each $U_{(s,p)}$ in the construction above is the same trivial component of the covering. Indeed, the first statement is true, \emph{at the level of sets and functions}, since the section $s$ is well-defined away from a discrete subset of $\mathring{M}$, so it is certainly well-defined on a punctured-disc neighbourhood of each point $p \in \mathring{M}$. However, this induced section of the covering $\Sigma(\xi) \to \mathring{M}$ will in general have discontinuities at the configuration points $z$.
\end{rmk}

\begin{defn}[\emph{Configuration-section spaces with prescribed monodromy.}]
\label{d:csection2}
As in Definition \ref{d:csection1}, let $M$ be a smooth, connected $d$-manifold and $\xi \colon E \to M$ be a fibre bundle, and now also choose a singularity condition $c \subseteq \pi_0(\Sigma(\xi))$. Then
\[
\cgamma{k}{c}{M}{\xi} \coloneqq \mathrm{loc}_{\xi}^{-1}(SP_k(c)) \subseteq \cgamma{k}{}{M}{\xi}.
\]
If we fix a subset $D \subseteq \partial M$ and a section $s_D \in \Gamma(D,\xi)$, we define (just as in \eqref{eq:def-csect-restricted}):
\begin{equation}
\label{eq:def-csect-restricted2}
\cgamma{k}{c,D}{M}{\xi} = \{ (z,s) \in \cgamma{k}{c}{M}{\xi} \mid s|_D = s_D \} .
\end{equation}
\end{defn}

\begin{rmk}
\label{rmk:path-components2}
Since \eqref{eq:local-section} is locally-constant, the subspace $\cgamma{k}{c}{M}{\xi}$ of $\cgamma{k}{}{M}{\xi}$ is a union of path-components, and similarly for the restricted version $\eqref{eq:def-csect-restricted2} \subseteq \eqref{eq:def-csect-restricted}$. (Compare Remark \ref{rmk:path-components1}.)
\end{rmk}

\begin{defn}[\emph{The associated fibration.}]
\label{d:associated-fibration}
From Definition \ref{d:cgamma-boundary} we have a fibre bundle \eqref{eq:fibre-bundle7} with total space $\cgamma{k}{D}{M}{\xi}$. By Remark \ref{rmk:path-components2}, the subspace $\cgamma{k}{c,D}{M}{\xi} \subseteq \cgamma{k}{D}{M}{\xi}$ is a union of path-components. Thus, the restriction
\begin{equation}
\label{eq:fibre-bundle8}
p \colon \cgamma{k}{c,D}{M}{\xi} \longrightarrow C_k(\mathring{M})
\end{equation}
of the fibre bundle \eqref{eq:fibre-bundle7} to the subspace $\cgamma{k}{c,D}{M}{\xi}$ is a Hurewicz fibration.\footnote{The base space of the fibre bundle \eqref{eq:fibre-bundle7} is paracompact, so it is a Hurewicz fibration. In general, if $f \colon E \to B$ is a Hurewicz fibration and $E_0 \subseteq E$ is a union of path-components, the composition $f \circ \mathrm{incl} \colon E_0 \hookrightarrow E \to B$ is also a Hurewicz fibration.} This is the \emph{associated fibration} of the configuration-section space $\cgamma{k}{c,D}{M}{\xi}$. Its fibre over $z \in C_k(\mathring{M})$ is denoted by
\[
\Gamma^{c,D}(M \smallsetminus z;\xi) = \{ s \in \Gamma(M \smallsetminus z;\xi) \mid s|_D = s_D \text{ and } \mathrm{loc}_\xi(z,s) \in c \} .
\]
\end{defn}

As our first family of examples, we explain how to recover the notion of \emph{configuration-mapping space} of \S\ref{s:cmap} in the case of a trivial bundle over $M$.

\begin{eg}
\label{eg:trivial-bundle}
If $\xi$ is the trivial bundle $M \times X \to M$ for a space $X$, then we clearly have an identification
\begin{equation}
\label{eq:trivial-bundle-identification}
\cgamma{k}{}{M}{\xi} = \cmap{k}{}{M}{X}.
\end{equation}

If $M$ is orientable, then the covering $\eta(\xi) \colon \Sigma(\xi) \to \mathring{M}$ is simply the disjoint union of copies of the trivial (identity) covering $\mathring{M} \to \mathring{M}$, one for each element of $[S^{d-1},X]$. In other words, $\eta(\xi)$ is isomorphic as a covering to the projection $\mathring{M} \times [S^{d-1},X] \to \mathring{M}$. This isomorphism of coverings depends on a choice of orientation of $M$.

If $M$ is non-orientable, then the covering $\eta(\xi) \colon \Sigma(\xi) \to \mathring{M}$ is a disjoint union of a number of copies of the identity covering $\mathring{M} \to \mathring{M}$ and a number of copies of the orientation double covering $\mathring{M}^{\mathrm{or}} \to \mathring{M}$. More precisely, consider the involution on the set $[S^{d-1},X]$ given by precomposition by a reflection of $S^{d-1}$. There is one copy of the identity covering in $\eta(\xi)$ for each fixed point of this action and one copy of the orientation double covering in $\eta(\xi)$ for each orbit of size two. In other words, writing $\cO_1$ for the set of orbits of size $1$ and $\cO_2$ for the set of orbits of size $2$ in $[S^{d-1},X]$, we have that $\eta(\xi)$ is isomorphic as a covering to
\begin{equation}
\label{eq:covering-space-nonorientable}
\mathrm{pr} \circ (\mathrm{id} \sqcup (\mathrm{or} \times \mathrm{id})) \colon
(\mathring{M} \times \cO_1) \sqcup (\mathring{M}^{\mathrm{or}} \times \cO_2) \too \mathring{M} \times (\cO_1 \sqcup \cO_2) \too \mathring{M},
\end{equation}
where $\mathrm{or}$ is the orientation double covering of $\mathring{M}$. This isomorphism is canonical up to the action of the $2^{\cO_2}$ deck transformations of \eqref{eq:covering-space-nonorientable} corresponding to the deck transformations of each of the copies of $\mathring{M}^{\mathrm{or}}$ (acting independently). In particular, the components $\pi_0(\Sigma(\xi))$ of the covering space are canonically identified with the orbits of the involution on $[S^{d-1},X]$.

A singularity condition $c \subseteq \pi_0(\Sigma(\xi))$ therefore corresponds to (compare Remark \ref{rmk:fixed-points}):
\begin{itemizeb}
\item (if $M$ is orientable) a subset of $[S^{d-1},X]$,
\item (if $M$ is non-orientable) a subset of $[S^{d-1},X]_{\bZ/2}$, the orbits of $[S^{d-1},X]$ under the involution given by pre-composition with a reflection of $S^{d-1}$.
\end{itemizeb}
In the orientable case, this correspondence depends on a choice of orientation of $M$; in the non-orientable case, it does not depend on any choice.
Interpreting the singularity condition $c$ in this way on the right-hand side, the identification \eqref{eq:trivial-bundle-identification} restricts to identifications:
\begin{equation}
\label{eq:trivial-bundle-identification2}
\cgamma{k}{c}{M}{\xi} = \cmap{k}{c}{M}{X}
\end{equation}
for each $c \subseteq \pi_0(\Sigma(\xi))$.
\end{eg}

\begin{rmk}
Theorem \ref{tmain} holds under the hypothesis that the pre-image of the singularity condition in $\pi_{d-1}(X)$ has size $1$. (The precise meaning of this is spelled out in the discussion just before Theorem \ref{thm-hom-stab}.) In light of the above discussion, this means that, for configuration-\emph{mapping} spaces with target $X$ and underlying manifold $M$, it holds precisely for those singularity conditions that correspond to an element of $\pi_{d-1}(X)$ that is fixed under the $\pi_1(X)$-action and additionally (\emph{if} $M$ is non-orientable) is fixed under the operation of taking inverses.
\end{rmk}

There is a natural variant of configuration-section spaces where configurations are equipped just with \emph{homotopy-classes} of sections on their complements, which are covering spaces of $C_k(\mathring{M})$:

\begin{defn}[\emph{Configuration-section spaces up to homotopy.}]
\label{d:cgamma-up-to-homotopy}
Define $h\cgamma{k}{}{M}{\xi}$ to be the quotient of $\cgamma{k}{}{M}{\xi}$ by the equivalence relation given by $(z,s) \sim (z',s')$ if $z=z'$ and the sections $s,s' \colon M \smallsetminus z \to E$ are homotopic through sections of $\xi|_{M \smallsetminus z}$, in other words, lie in the same path-component of the space $\Gamma(M \smallsetminus z,\xi)$.
The locally-constant map $\eqref{eq:local-section} = \mathrm{loc}_\xi$ factors into the quotient map $\cgamma{k}{}{M}{\xi} \to h\cgamma{k}{}{M}{\xi}$ and a locally-constant map
\begin{equation}\label{eq:local-section2}
h\mathrm{loc}_\xi \colon h\cgamma{k}{}{M}{\xi} \longrightarrow \pi_0(\Sigma(\xi)),
\end{equation}
since homotopic sections of $\xi|_{M \smallsetminus z}$ have the same local germs in punctured neighbourhoods of every point in $z$. We then define
\[
h\cgamma{k}{c}{M}{\xi} \coloneqq h\mathrm{loc}_{\xi}^{-1}(SP_k(c))
\]
for any singularity condition $c \subseteq \pi_0(\Sigma(\xi))$. We may equivalently define $h\cgamma{k}{c}{M}{\xi}$ to be the quotient of $\cgamma{k}{c}{M}{\xi}$ by the restriction of the equivalence relation above.
More generally, if we fix a subset $D \subseteq \partial M$ and a section $s_D \in \Gamma(D,\xi)$, we may define $h\cgamma{k}{c,D}{M}{\xi}$ to be the quotient of $\cgamma{k}{c,D}{M}{\xi}$ by the equivalence relation given by $(z,s) \sim (z',s')$ if $z=z'$ and the sections $s,s'$ lie in the same path-component of the space
\[
\Gamma(M \smallsetminus z , \xi ; s_D) = \{ s \in \Gamma(M \smallsetminus z,\xi) \mid s|_D = s_D \} .
\]
\end{defn}

\begin{eg}[\emph{Configurations equipped with a cohomology class of the complement.}]
\label{eg:cohomology-class}
As a special case, of course, we have \emph{configuration-mapping spaces up to homotopy}
\[
h\cmap{k}{c}{M}{X} = h\cgamma{k}{c}{M}{M \times X}.
\]
As a set, $h\cmap{k}{c}{M}{X}$ consists of pairs $(z,f)$, where $z \subseteq \mathring{M}$ has cardinality $k$ and $f$ is a homotopy class of maps $M \smallsetminus z \to X$ whose monodromy around each point of $z$ lies in $c \subseteq [S^{d-1},X]$. If we take $X$ to be the Eilenberg-MacLane space $K(G,d-1)$ for an abelian group $G$, then $c$ is a subset of $H^{d-1}(S^{d-1};G) \cong G$ and a point in
\[
h\cmap{k}{c}{M}{K(G,d-1)}
\]
is a configuration $z \subseteq \mathring{M}$ equipped with a cohomology class $\alpha \in H^{d-1}(M \smallsetminus z ; G)$ whose restriction to each embedded sphere $S^{d-1} \subseteq M \smallsetminus z$ that encloses exactly one point of $z$, lies in $c$.
\end{eg}

Before describing the next example of configuration-mapping spaces up to homotopy, we note that, under certain conditions, configuration-mapping spaces up to homotopy have the same weak homotopy type as the corresponding configuration-mapping spaces (not up to homotopy).

\begin{lem}
\label{lem:contractible-components}
Let $M$ be a compact, connected $d$-manifold with basepoint $* \in \partial M$ and $X$ a based, path-connected space with a choice of subset $c \subseteq [S^{d-1},X]$. Assume that $M$ is homotopy equivalent to a wedge of $(d-1)$-spheres and that $X$ is $d$-coconnected, meaning that $\pi_i(X)=0$ for all $i\geq d$. Then, for any $k\geq 0$, the quotient map
\begin{equation}
\label{eq:cmap-and-cmapt2}
\cmap{k}{c,*}{M}{X} \longrightarrow h\cmap{k}{c,*}{M}{X}
\end{equation}
is a weak homotopy equivalence. In particular, this holds if $M=S$ is a compact, connected surface, $X=BG$ is the classifying space of a discrete group $G$ and $c$ is a set of conjugacy classes of $G$.
\end{lem}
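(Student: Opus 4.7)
The plan is to compare two canonical fibrations over $C_k(\mathring{M})$: the fibre bundle $p_1 \colon \cmap{k}{c,*}{M}{X} \to C_k(\mathring{M})$ from Definition \ref{def:canonical-fibration} and the map $p_2 \colon h\cmap{k}{c,*}{M}{X} \to C_k(\mathring{M})$ obtained from $p_1$ by the fibrewise quotient ``identify sections lying in the same path-component of the fibre''. Over any trivialising open set $U$ for $p_1$, this quotient has the form $U \times \pi_0(F_z) \to U$, where $F_z \coloneqq \mathrm{Map}^c((M \smallsetminus z, *), (X,*))$ is the fibre of $p_1$, so $p_2$ is a covering space. The quotient map \eqref{eq:cmap-and-cmapt2} is fibrewise the canonical surjection $F_z \to \pi_0(F_z)$. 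Comparing the long exact sequences of homotopy groups of $p_1$ and $p_2$ via the five lemma, the claim then reduces to showing that each path-component of $F_z$ is weakly contractible.

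Since $F_z$ is a union of path-components of $\mathrm{Map}_*(M \smallsetminus z, X)$, the crucial homotopy-type input will be: under the hypothesis $M \simeq \bigvee^m S^{d-1}$, one has $M \smallsetminus z \simeq \bigvee^{m+k} S^{d-1}$. I would let $M'$ be $M$ with small disjoint open $d$-discs removed around each point of $z$, observing that $M \smallsetminus z \simeq M'$. Excision identifies $H_*(M, M') \cong \bigoplus^k H_*(D^d, S^{d-1})$, and the long exact sequence of the pair then yields $H_*(M') \cong H_*(\bigvee^{m+k} S^{d-1})$. For $d \geq 3$, the space $M'$ is simply-connected (as $z$ has codimension $\geq 3$ in $M$), so Whitehead's theorem finishes the identification; for $d = 2$, $M'$ is a compact surface with non-empty boundary and hence automatically homotopy equivalent to a wedge of circles of the required count.

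Combining these, $\mathrm{Map}_*(M \smallsetminus z, X) \simeq (\Omega^{d-1} X)^{m+k}$. Each path-component of $\Omega^{d-1} X$ has homotopy groups $\pi_i = \pi_{i+d-1}(X)$, which vanish for $i \geq 1$ by the $d$-coconnectivity of $X$; each component is therefore weakly contractible, as is each component of the product. This yields the required fibrewise weak equivalence and hence the lemma. The particular case is then immediate: a compact connected surface with non-empty boundary (necessary to accommodate the basepoint $* \in \partial M$) is homotopy equivalent to a wedge of circles, and $BG$ for a discrete group $G$ satisfies $\pi_i(BG) = 0$ for all $i \geq 2 = d$. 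The main subtlety I expect is to confirm rigorously that $p_2$ is a fibration over $C_k(\mathring{M})$; this follows from the local-triviality observation above, once the equivalence relation is recognised as purely fibrewise.
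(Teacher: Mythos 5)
Your proposal is correct and follows essentially the same route as the paper: compare the two fibrations over $C_k(\mathring{M})$ fibrewise, identify $M \smallsetminus z$ with a wedge of $|z|+\ell$ copies of $S^{d-1}$ (using that $M$ has non-empty boundary), and use the $d$-coconnectivity of $X$ to conclude that each path-component of the fibre $\mathrm{Map}^{c,*}(M\smallsetminus z,X)\simeq(\Omega^{d-1}X)^{\ell}\times(\Omega_c^{d-1}X)^{|z|}$ is weakly contractible. Your added detail on why $M\smallsetminus z$ is a wedge of spheres (excision plus Whitehead for $d\geq 3$, surface classification for $d=2$) fills in a step the paper states without proof, and is fine.
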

\begin{proof}
The quotient map fits into a map of fibration sequences:
\begin{equation}
\label{eq:cmap-and-cmapt-diagram}
\centering
\begin{split}
\begin{tikzpicture}
[x=1mm,y=1mm]
\node (tl) at (0,30) {$\mathrm{Map}^{c,*}(M \smallsetminus z,X)$};
\node (tr) at (50,30) {$\pi_0(\mathrm{Map}^{c,*}(M \smallsetminus z,X))$};
\node (ml) at (0,15) {$\cmap{k}{c,*}{M}{X}$};
\node (mr) at (50,15) {$h\cmap{k}{c,*}{M}{X}$};
\node (b) at (25,0) {$C_k(\mathring{M})$};
\draw[->] (tl) to node[above,font=\small]{$(\star)$} (tr);
\draw[->] (ml) to node[above,font=\small]{\eqref{eq:cmap-and-cmapt2}} (mr);
\draw[->] (tl) to (ml);
\draw[->] (tr) to (mr);
\draw[->] (ml) to (b);
\draw[->] (mr) to (b);
\end{tikzpicture}
\end{split}
\end{equation}
Since $M$ is homotopy equivalent to a wedge of some number $\ell$ of $(d-1)$-spheres and has non-empty boundary, we have $M \smallsetminus z \simeq \bigvee^{\lvert z \rvert + \ell} S^{d-1}$ and thus
\begin{equation}
\label{eq:product-of-loopspaces}
\mathrm{Map}^{c,*}(M \smallsetminus z,X) \simeq (\Omega^{d-1}X)^\ell \times (\Omega_c^{d-1}X)^{\lvert z \rvert},
\end{equation}
where $\Omega_c^{d-1}X \subseteq \Omega^{d-1}X$ is the union of path-components corresponding to $c \subseteq [S^{d-1},X]$ under the identification of $[S^{d-1},X]$ with $\pi_{d-1}(X)/\pi_1(X)$. Since $X$ is $d$-coconnected, the space \eqref{eq:product-of-loopspaces} is $1$-coconnected, in other words its path-components are weakly contractible, and so the map $(\star)$ of fibres in \eqref{eq:cmap-and-cmapt-diagram} is a weak homotopy equivalence, and therefore so is the map \eqref{eq:cmap-and-cmapt2}.
\end{proof}

\begin{rmk}
Lemma \ref{lem:contractible-components} generalises in an analogous way to configuration-section spaces, for a bundle $\xi \colon E \to M$ equipped with a section over $\{*\} \subseteq \partial M$, i.e., a point $e_0 \in E$ with $\xi(e_0) = *$.
\end{rmk}

\begin{eg}[\emph{Hurwitz spaces.}]
\label{eg:Hurwitz2}
Following on from Example \ref{eg:cohomology-class}, let $S$ be a compact, connected surface with boundary and now take $X = K(G,1) = BG$ for a (not necessarily abelian) discrete group $G$. Note that $S \smallsetminus z$, for any finite subset $z \subseteq \mathring{S}$, is aspherical, so we have a natural bijection
\[
[S \smallsetminus z , BG] \cong \mathrm{Hom}(\pi_1(S \smallsetminus z),G) / G,
\]
where the quotient on the right-hand side is by the action of $G$ given by post-composition by inner automorphisms. A point in
\[
h\cmap{k}{c}{S}{BG}
\]
therefore consists of a configuration $z \subseteq \mathring{S}$ equipped with a homomorphism $\pi_1(S \smallsetminus z) \to G$ modulo inner automorphisms of $G$. If we now take $D = \{*\} \subseteq \partial S$ to be a point, we have a natural bijection
\[
\langle S \smallsetminus z , BG \rangle \cong \mathrm{Hom}(\pi_1(S \smallsetminus z),G),
\]
where $\langle -,- \rangle$ denotes based homotopy classes of based maps, and so a point in
\[
h\cmap{k}{c,*}{S}{BG}
\]
consists of a configuration $z \subseteq \mathring{S}$ equipped with a homomorphism $\pi_1(S \smallsetminus z) \to G$. In particular, if $S = D^2$ we have a homeomorphism
\[
h\cmap{k}{c,*}{D^2}{BG} \cong \mathrm{Hur}_{G,k}^c,
\]
where the right-hand side is the corresponding Hurwitz space. Moreover, Lemma \ref{lem:contractible-components} implies that the quotient map
\begin{equation}
\label{eq:cmap-and-cmapt}
\cmap{k}{c,*}{S}{BG} \longrightarrow h\cmap{k}{c,*}{S}{BG}
\end{equation}
is a weak homotopy equivalence for any compact, connected surface-with-boundary $S$. In the case $S=D^2$, we therefore have weak homotopy equivalences (compare Remark \ref{rmk:Hurwitz1}):
\begin{equation}
\label{eq:cmap-and-hur}
\cmap{k}{c,I}{D^2}{BG} \simeq \cmap{k}{c,*}{D^2}{BG} \simeq h\cmap{k}{c,*}{D^2}{BG} \cong \mathrm{Hur}_{G,k}^c .
\end{equation}
\end{eg}

\begin{rmk}
\label{rmk:fibrations-and-homotopy}
Before discussing our next two examples, we remark that it is not really necessary for $\xi \colon E \to M$ to be a \emph{fibre bundle} for our results to hold; it is enough for it to be a \emph{fibration}. In addition, our methods and results go through equally well when configuration-section spaces $\mathrm{C}\Gamma$ are replaced by configuration-section spaces up to homotopy $h\mathrm{C}\Gamma$ (Definition \ref{d:cgamma-up-to-homotopy}). In fact, since configuration-section spaces up to homotopy $h\cgamma{k}{c,*}{M}{\xi}$ are simply covering spaces of the corresponding configuration spaces $C_k(\mathring{M})$,\footnote{Moreover, if $M$ has dimension at least $3$, its handle dimension is at most $\mathrm{dim}(M)-2$ and $c$ corresponds to a single element of $\pi_{d-1}$ of the fibre of $\xi$, then $h\cgamma{k}{c,*}{M}{\xi}$ is a \emph{trivial} covering space of $C_k(\mathring{M})$. Thus the interesting examples of configuration-section spaces up to homotopy are those where $M$ either is a surface or has large handle dimension, or the singularity condition $c$ corresponds to a larger subset of $\pi_{d-1}$ of the fibre of $\xi$.} our proofs simplify drastically when replacing $\mathrm{C}\Gamma$ with $h\mathrm{C}\Gamma$, and homological stability in this case is a relatively straightforward consequence of classical homological stability for the configuration spaces $C_k(\mathring{M})$. However, even if they are a somewhat degenerate instance of our main theorem, we mention these examples, as they nevertheless have interesting interpretations in terms of topological structures on the complement of a configuration.
\end{rmk}

\begin{eg}[\emph{Tangential structures.}]
\label{eg:tangential}
If $\theta \colon E \to BO(d)$ is fibration, and $\tau_M$ denotes the homotopy class of maps $M \to BO(d)$ classifying the tangent bundle of $M$, then a \emph{tangential structure of type $\theta$} on $M$ is a lift of $\tau_M$ up to homotopy to a map $M \to E$. Picking a specific map representing the homotopy class $\tau_M$ and pulling $\theta$ back along it, we may equivalently view a tangential structure of type $\theta$ as a section up to homotopy of $(\tau_M)^*(\theta) \colon (\tau_M)^*(E) \to M$. Hence
\[
h\cgamma{k}{*}{M}{(\tau_M)^*(\theta)}
\]
is the moduli space of $k$-point configurations in $M$ whose complement is equipped with a tangential structure of type $\theta$, and its subspaces corresponding to a singularity condition $c$ may be interpreted as moduli spaces of configurations whose complement is equipped with a tangential structure of type $\theta$ with prescribed monodromy around the configuration points.

In particular, if $\theta = \theta_r \colon BO(d)\langle r \rangle \to BO(d)$ is the $r$-connected covering of $BO(d)$, then its homotopy fibre $X$ has trivial homotopy group in degrees $\geq r$. In particular, if $d = \mathrm{dim}(M) \geq r+1$, we have $\pi_{d-1}(X)=0$, so Theorem \ref{tmain} applies to the full, unrestricted configuration-section spaces (up to homotopy; see Remark \ref{rmk:fibrations-and-homotopy}). For example, if $d \geq 3$ this applies to spin structures ($r=2$), if $d \geq 5$ it applies to string structures ($r=4$), etc., as mentioned in Example \ref{eg:spin-etc} in the introduction.
\end{eg}

\begin{eg}[\emph{Flat connections.}]
\label{eg:flat-connection}
As another example, let $P \to M$ be a principal $G$-bundle, for a topological group $G$, classified by a map $f \colon M \to BG$. Let $\theta \colon BG^\delta \to BG$ be the map induced by the continuous homomorphism $G^\delta \to G$ given by the identity on underlying sets. If we take $\xi = f^*(\theta)$, then a section of $\xi$ (up to homotopy through sections) is a flat connection for $P \to M$. Hence
\[
h\cgamma{k}{*}{M}{f^*(\theta)}
\]
is the space of $k$-point configurations in $M$ whose complement is equipped with a flat connection for $P \to M$. From the long exact sequence of the map $\theta$, we see that $\pi_i(X) \cong \pi_i(G)$ for $i \geq 2$, where $X$ is the homotopy fibre of $\theta$. Thus, if $d = \mathrm{dim}(M) = 3$ and $G$ is a Lie group, we have $\pi_{d-1}(X) = \pi_2(X) = \pi_2(G) = 0$, and so Theorem \ref{tmain} applies to the full, unrestricted configuration-section spaces (up to homotopy; see Remark \ref{rmk:fibrations-and-homotopy}), as mentioned in Example \ref{eg:spin-etc} in the introduction.
\end{eg}

\begin{eg}[\emph{Linearly independent vector fields and distributions.}]
\label{eg:vector-fields}
Let $TM \to M$ denote the tangent bundle of $M$ and write $\xi_r \colon \mathrm{Lin}_r(TM) \to M$ for the associated fibre bundle whose fibre over $p \in M$ is the subspace of $(T_p M)^r$ consisting of linearly independent $r$-tuples of tangent vectors at $p$. Then the configuration-section space
\[
\cgamma{k}{}{M}{\xi_r}
\]
is the space of $k$-point configurations $z \subseteq \mathring{M}$ equipped with an $r$-tuple of linearly independent vector fields on $M \smallsetminus z$. In particular, when $r=1$, this is the space of configurations equipped with a non-vanishing vector field on the complement.

On the other hand, if we replace $\xi_r$ by its quotient bundle whose fibre over $p \in M$ is the Grassmannian of $r$-planes in $T_p M$, then we obtain the space of $k$-point configurations whose complement is equipped with an $r$-plane distribution.
\end{eg}

\begin{eg}
If we take $X = B\Gamma_k$ to be Haefliger's classifying space for codimension-$k$ smooth foliations \cite{Haefliger1958, Haefliger1970, Haefliger1971}, then configuration-mapping spaces with target $X$ are moduli spaces of configurations whose complement is equipped with a rank-$k$ foliated smooth microbundle \cite{Segal1978}.
\end{eg}

\begin{rmk}
Although configuration-section spaces, as we have defined them, include all possible \emph{tangential} structures on the complement, it would also be very interesting to study configuration spaces whose complements are equipped with geometric structures that do \emph{not} arise as a tangential structure, for example foliations or Riemannian metrics with bounded curvature, etc.
\end{rmk}

\begin{rmk}
There is a closely-related notion of \emph{configuration-bundle-map spaces} \cite[\S 2.2]{Nariman2018}, which consist of configurations $z$ in the interior of $M$ together with a bundle map
\[
( T(M \smallsetminus z) \to M \smallsetminus z ) \too ( \eta \colon E \to B ),
\]
where $\eta \colon E \to B$ is a fixed fibre bundle. This has some overlap with the notion of configuration-section spaces, although neither is more general than the other. In particular, if the bundle $\xi \colon E \to M$ is pulled back from a bundle over $BO(d)$ along a map classifying the tangent bundle of $M$, then the corresponding configuration-section space may be expressed as a configuration-bundle-map space. Conversely, if the bundle $E \to B$ is the pullback of the tautological bundle over $BO(d)$ along some map $f \colon B \to BO(d)$, then the corresponding configuration-bundle-map space may be expressed as a configuration-section space.

We expect that the techniques and results of this paper carry over analogously to configuration-bundle-map spaces, although we do not go into the details of this here.
\end{rmk}

\section{On \texorpdfstring{$E_m$}{E-m}-modules over \texorpdfstring{$E_n$}{E-n}-algebras}\label{s:em-modules-over-en-algebras}

All of the structure that we will use in studying configuration-section spaces will arise from their structure as an \emph{$E_0$-module over an $E_{d-1}$-algebra}, which will be defined explicitly, in appropriate models, in this section. Before this, we recall the notion of \emph{$E_m$-module over an $E_n$-algebra} for any $0\leq m\leq n$, and explain why, for $n$ fixed, these notions coincide for all $m \in \{0,1,\ldots,n-1\}$.

\begin{rmk}
\label{rmk:e0modules}
The last sentence above means that, when $d\geq 3$, we may equally well describe the structure that we construct as an ``$E_1$-module'' over an $E_{d-1}$-algebra. However, since we also consider the case of dimension $d=2$, we prefer to use the term ``$E_0$-module'' throughout, for consistency. This diverges from the terminology of \cite{Krannich2019Homologicalstabilitytopological}, where the name ``$E_1$-module'' is used -- this is (as we explain below) equivalent since that paper considers only $E_n$-algebras where $n\geq 2$.
\end{rmk}

We begin by recalling several different flavours of Swiss cheese operads. For an integer $n\geq 0$, let $D^n$ denote the closed unit disc in $\bR^n$. For a space $X$ and an integer $k\geq 0$, let $\bar{F}_k(X)$ denote the ordered configuration space of $k$ points in $X$ labelled by positive real numbers:
\[
\bar{F}_k(X) = \{ ((x_1,r_1),\ldots,(x_n,r_n)) \in (X \times (0,\infty))^n \mid x_i \neq x_j \text{ for } i \neq j \} .
\]
Fix an integer $n\geq 0$.

\begin{defn}
\label{d:little-discs}
The \emph{little $n$-discs operad} $\disc_n$ has one colour $\sa$. For any integer $k\geq 0$ its space of operations $\disc_n(\sa^k;\sa)$ is the subspace of $\bar{F}_k(D^n)$ of configurations satisfying
\begin{itemizeb}
\item[(i)] $\lvert x_i \rvert \leq 1 - r_i$
\item[(ii)] $\lvert x_i - x_j \rvert \geq r_i + r_j$ for $i\neq j$.
\end{itemizeb}
Interpreting such configurations as little $n$-discs in $D^n$ whose interiors are disjoint, with centres $x_i$ and radii $r_i$ --  see Figure \ref{fig:swiss-cheese}(a) -- the operadic composition is defined by embedding $D^n$ into these smaller discs by translations and dilations. The symmetric action is given by the natural action of $\Sigma_k$ on $\bar{F}_k(D^n)$.
\end{defn}

\begin{defn}
A \emph{$\disc_n$-module operad} is any operad $\mathsf{O}$ with two colours $\sa$ and $\sm$, and whose space of operations $\mathsf{O}(\sa^k,\sm^l;\sa)$, for any integers $k,l\geq 0$, is equal to $\disc_n(\sa^k;\sa)$ for $l=0$ and empty for $l\geq 1$. Moreover, its operadic composition, restricted to the colour $\sa$, must be equal to that of $\disc_n$.
\end{defn}

Fix two integers $n\geq m\geq 0$. We now define five different $\disc_n$-module operads, the \emph{Swiss cheese operad} ($\swc_{m,n}$), as well as its \emph{extended} ($\eswc_{m,n}$), \emph{variant} ($\vswc_{m,n}$), \emph{concentric} ($\cswc_{m,n}$) and \emph{linear} ($\lswc_n$) cousins. In each case it will suffice to specify the spaces of operations $\mathsf{O}(\sa^k,\sm^l;\sm)$ for integers $k,l\geq 0$ and describe how to extend the operadic composition of $\disc_n$ to the colour $\sm$.

\begin{rmk}
The original Swiss cheese operad $\swc_{1,2}$ was introduced by Voronov \cite{Voronov1999}, inspired by constructions of Kontsevich \cite{Kontsevich1994Feynman-diagrams, Kontsevich2003deformation-quantization-Poisson}. The extended Swiss cheese operad $\eswc_{m,n}$ was introduced by Willwacher \cite{Willwacher2017}, who cites V.\ Turchin for its invention in codimension $1$ (when $n=m+1$). The variant Swiss cheese operad $\vswc_{m,n}$ was introduced by Idrissi \cite{Idrissi2018}. The linear Swiss cheese operad $\lswc_n$ was introduced (under the name $\swc_n$) by Krannich \cite[\S 2.1]{Krannich2019Homologicalstabilitytopological}.
\end{rmk}

\begin{defn}[\emph{Extended, variant and original Swiss cheese operads.}]
\label{d:swiss-cheese}
Let $\eswc_{m,n}(\sa^k,\sm^l;\sm)$ be the subspace of $\bar{F}_{k+l}(D^n)$ of configurations satisfying (i) and (ii) above, and
\begin{itemizeb}
\item[(iii)] $x_i \in D^m = D^n \cap (\bR^m \times \{0\}^{n-m})$ for $i\geq k+1$.
\end{itemizeb}
The space $\vswc_{m,n}(\sa^k,\sm^l;\sm)$ is the subspace of configurations additionally satisfying the condition
\begin{itemizeb}
\item[(iv)] $\mathrm{dist}(x_i,D^m) > r_i$ for $i\leq k$.
\end{itemizeb}
The space $\swc_{m,n}(\sa^k,\sm^l;\sm)$ is the subspace of configurations satisfying conditions (i)--(iii) and
\begin{itemizeb}
\item[(v)] $x_i \in \bR^m \times (r_i,\infty)^{n-m}$ for $i\leq k$.
\end{itemizeb}
(Note that condition (v) is stronger than condition (iv).) Interpreting such configurations again as little non-overlapping $n$-discs in $D^n$ -- see Figure \ref{fig:swiss-cheese}(b--d) -- we may extend the operadic composition of $\disc_n$ to each of these $2$-coloured operads by embedding $D^n$ into these smaller discs by translations and dilations.
\end{defn}

\begin{defn}[\emph{The concentric Swiss cheese operad.}]
\label{d:concentric-swiss-cheese}
The space of operations $\cswc_{m,n}(\sa^k,\sm^l;\sm)$ is empty unless $l=1$, in which case it is the subspace of $\bar{F}_{k+1}(D^n)$ of configurations satisfying (i), (ii) and (v) above, as well as
\begin{itemizeb}
\item[(vi)] $x_{k+1} = 0$.
\end{itemizeb}
Its operadic composition is defined as before, interpreting these configuration spaces as spaces of little non-overlapping discs -- see Figure \ref{fig:swiss-cheese}(e).
\end{defn}

\begin{defn}[\emph{The linear Swiss cheese operad.}]
\label{d:linear-swiss-cheese}
The space of operations $\lswc_n(\sa^k,\sm^l;\sm)$ is empty unless $l=1$, in which case it is the subspace of $\bar{F}_k([0,\infty) \times [-1,1]^{n-1}) \times [0,\infty)$ of configurations $((x_1,r_1),\ldots,(x_k,r_k))$ and $t\geq 0$ satisfying (ii) above, and
\begin{itemizeb}
\item[(vii)] $x_i \in [r_i,t-r_i] \times [r_i-1,1-r_i]^{n-1}$.
\end{itemizeb}
Its operadic composition is given by interpreting these configurations as configurations of little non-overlapping discs and embedding $D^n$ by translations and dilations, as well as placing copies of the cuboid $[0,t] \times [-1,1]^{n-1}$ end-to-end in the first coordinate direction and adding the corresponding values of $t$. See Figure \ref{fig:swiss-cheese}(f), and also \cite[Definition 2.1]{Krannich2019Homologicalstabilitytopological} for precise formulas for the operadic composition.
\end{defn}

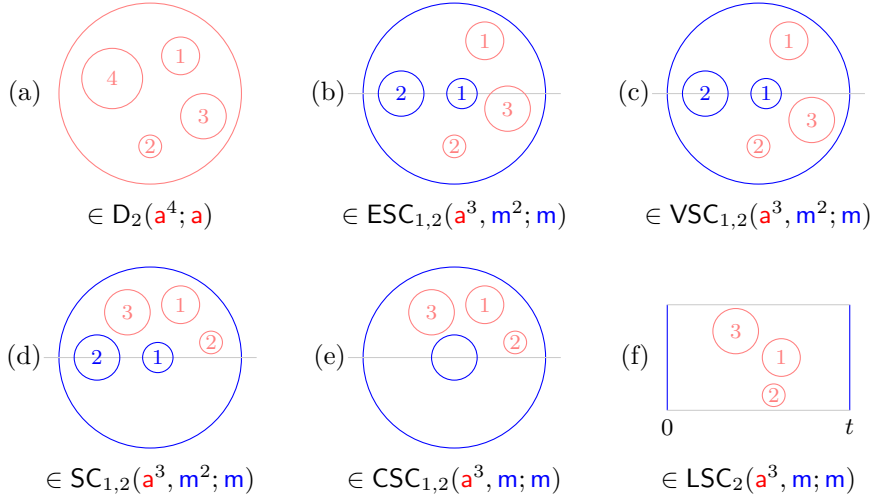
\begin{figure}[t]
\centering
\begin{tikzpicture}
[x=1mm,y=1mm]

\begin{scope}
\draw[red!50] (0,0) circle (12);
\draw[red!50] (4,5) node[font=\footnotesize]{$1$} circle (2.5);
\draw[red!50] (-5,2) node[font=\footnotesize]{$4$} circle (4);
\draw[red!50] (7,-3) node[font=\footnotesize]{$3$} circle (3);
\draw[red!50] (0,-7) node[font=\footnotesize]{$2$} circle (1.5);
\node at (0,-13) [anchor=north] {$\in\disc_2(\textcolor{red}{\sa}^4;\textcolor{red}{\sa})$};
\node at (-13,0) [anchor=east] {(a)};
\end{scope}

\begin{scope}[xshift=40mm]
\draw[blue] (0,0) circle (12);
\draw[black!20] (-14,0) -- (14,0);
\draw[red!50] (4,7) node[font=\footnotesize]{$1$} circle (2.5);
\draw[red!50] (7,-2) node[font=\footnotesize]{$3$} circle (3);
\draw[red!50] (0,-7) node[font=\footnotesize]{$2$} circle (1.5);
\draw[blue] (1,0)  node[font=\footnotesize]{$1$} circle (2);
\draw[blue] (-7,0) node[font=\footnotesize]{$2$} circle (3);
\node at (0,-13) [anchor=north] {$\in\eswc_{1,2}(\textcolor{red}{\sa}^3,\textcolor{blue}{\sm}^2;\textcolor{blue}{\sm})$};
\node at (-13,0) [anchor=east] {(b)};
\end{scope}

\begin{scope}[xshift=80mm]
\draw[blue] (0,0) circle (12);
\draw[black!20] (-14,0) -- (14,0);
\draw[red!50] (4,7) node[font=\footnotesize]{$1$} circle (2.5);
\draw[red!50] (7,-3.5) node[font=\footnotesize]{$3$} circle (3);
\draw[red!50] (0,-7) node[font=\footnotesize]{$2$} circle (1.5);
\draw[blue] (1,0)  node[font=\footnotesize]{$1$} circle (2);
\draw[blue] (-7,0) node[font=\footnotesize]{$2$} circle (3);
\node at (0,-13) [anchor=north] {$\in\vswc_{1,2}(\textcolor{red}{\sa}^3,\textcolor{blue}{\sm}^2;\textcolor{blue}{\sm})$};
\node at (-13,0) [anchor=east] {(c)};
\end{scope}

\begin{scope}[yshift=-35mm]
\draw[blue] (0,0) circle (12);
\draw[black!20] (-14,0) -- (14,0);
\draw[red!50] (4,7) node[font=\footnotesize]{$1$} circle (2.5);
\draw[red!50] (-3,6) node[font=\footnotesize]{$3$} circle (3);
\draw[red!50] (8,2) node[font=\footnotesize]{$2$} circle (1.5);
\draw[blue] (1,0) node[font=\footnotesize]{$1$} circle (2);
\draw[blue] (-7,0) node[font=\footnotesize]{$2$} circle (3);
\node at (0,-13) [anchor=north] {$\in\swc_{1,2}(\textcolor{red}{\sa}^3,\textcolor{blue}{\sm}^2;\textcolor{blue}{\sm})$};
\node at (-13,0) [anchor=east] {(d)};
\end{scope}

\begin{scope}[yshift=-35mm,xshift=40mm]
\draw[blue] (0,0) circle (12);
\draw[black!20] (-14,0) -- (14,0);
\draw[red!50] (4,7) node[font=\footnotesize]{$1$} circle (2.5);
\draw[red!50] (-3,6) node[font=\footnotesize]{$3$} circle (3);
\draw[red!50] (8,2) node[font=\footnotesize]{$2$} circle (1.5);
\draw[blue] (0,0) circle (3);
\node at (0,-13) [anchor=north] {$\in\cswc_{1,2}(\textcolor{red}{\sa}^3,\textcolor{blue}{\sm};\textcolor{blue}{\sm})$};
\node at (-13,0) [anchor=east] {(e)};
\end{scope}

\begin{scope}[yshift=-35mm,xshift=80mm]
\draw[blue] (-12,-7) -- (-12,7);
\draw[blue] (12,-7) -- (12,7);
\draw[black!20] (-12,-7) -- (12,-7);
\draw[black!20] (-12,7) -- (12,7);
\draw[red!50] (3,0) node[font=\footnotesize]{$1$} circle (2.5);
\draw[red!50] (-3,3.5) node[font=\footnotesize]{$3$} circle (3);
\draw[red!50] (2,-5) node[font=\footnotesize]{$2$} circle (1.5);
\node at (-12,-7) [anchor=north,font=\small] {$0$};
\node at (12,-7) [anchor=north,font=\small] {$t$};
\node at (0,-13) [anchor=north] {$\in\lswc_2(\textcolor{red}{\sa}^3,\textcolor{blue}{\sm};\textcolor{blue}{\sm})$};
\node at (-13,0) [anchor=east] {(f)};
\end{scope}

\end{tikzpicture}
\caption{Some operations of the little discs operad and of different flavours of Swiss cheese operads in dimensions $(1,2)$. Little blue (\textcolor{blue}{$\sm$} colour) discs are always centred on the $x$-axis. Little red (\textcolor{red}{$\sa$} colour) discs are unrestricted in $\eswc_{1,2}$ (except that they must not overlap each other or the little blue discs, of course). In $\vswc_{1,2}$, little red discs must be disjoint from the $x$-axis (little red discs are called \emph{aerial} and little blue discs \emph{terrestrial} in \cite{Idrissi2018}). In $\swc_{1,2}$, little red discs must lie in the upper half-disc (and we may choose to think of the little blue discs simply as little half-discs attached to the $x$-axis). In $\cswc_{1,2}$ the same conditions apply to little red discs, and there is now required to be exactly one little blue disc, centred at the origin. In $\lswc_2$, little red discs may lie anywhere in the rectangle. See Definitions \ref{d:little-discs}, \ref{d:swiss-cheese}, \ref{d:concentric-swiss-cheese} and \ref{d:linear-swiss-cheese} for the precise definitions.}
\label{fig:swiss-cheese}
\end{figure}

By definition, there are inclusions of operads
\begin{equation}
\label{eq:inclusions-operads}
\cswc_{m,n} \longhookrightarrow \swc_{m,n} \longhookrightarrow \vswc_{m,n} \longhookrightarrow \eswc_{m,n}
\end{equation}
that restrict to the identity map of $\disc_n$ on the $\sa$ colour. The natural inclusions of discs $D^n \hookrightarrow D^{n+1}$ and cubes $[-1,1]^{n-1} \hookrightarrow [-1,1]^n$ induce dimension-increasing inclusions (see Figure \ref{fig:dimension-increasing})
\begin{equation}
\label{eq:dimension-increasing}
\lswc_n \too \lswc_{n+1} \qquad\text{and}\qquad \xswc_{m,n} \too \xswc_{m,n+1}
\end{equation}
(for $\mathsf{X} \in \{ \mathsf{E},\mathsf{V},\varnothing,\mathsf{C} \}$) and similarly
\begin{equation}
\label{eq:dimension-increasing-2}
\xswc_{m-1,n} \too \xswc_{m,n}
\end{equation}
(for $\mathsf{X} \in \{ \mathsf{E},\mathsf{V},\varnothing,\mathsf{C} \}$), which commute with the ``flavour-changing'' inclusions \eqref{eq:inclusions-operads}. The connection between the four operads \eqref{eq:inclusions-operads} and the linear Swiss cheese operads is given by the following lemma. Recall that a \emph{weak equivalence of operads} is a map of operads $\mathsf{O} \to \mathsf{P}$ such that the maps of spaces $\mathsf{O}(\sa^k,\sm^l;\sa) \to \mathsf{P}(\sa^k,\sm^l;\sa)$ and $\mathsf{O}(\sa^k,\sm^l;\sm) \to \mathsf{P}(\sa^k,\sm^l;\sm)$ are all weak equivalences.

\begin{figure}[t]
\centering
\begin{tikzpicture}
[x=1mm,y=1mm]
\begin{scope}
\draw[blue] (0,0) circle (12);
\draw[black!20] (-14,0) -- (14,0);
\draw[red!50] (4,7) node[font=\footnotesize]{$1$} circle (2.5);
\draw[red!50] (7,-2) node[font=\footnotesize]{$3$} circle (3);
\draw[red!50] (0,-7) node[font=\footnotesize]{$2$} circle (1.5);
\draw[blue] (1,0)  node[font=\footnotesize]{$1$} circle (2);
\draw[blue] (-7,0) node[font=\footnotesize]{$2$} circle (3);
\end{scope}
\node at (25,0) {$\longmapsto$};
\begin{scope}[xshift=50mm]
\draw[blue] (0,0) circle (12);
\draw[blue] (-12,0) arc (180:360:12 and 4.8);
\draw[blue,densely dotted] (12,0) arc (0:180:12 and 4.8);
\draw[black!20] (-14,0) -- (14,0);
\draw[red!50] (4,7) circle (2.5);
\draw[red!50] (1.5,7) arc (180:360:2.5 and 1);
\draw[red!50,densely dotted] (6.5,7) arc (0:180:2.5 and 1);
\draw[red!50] (7,-2) circle (3);
\draw[red!50] (4,-2) arc (180:360:3 and 1.2);
\draw[red!50,densely dotted] (10,-2) arc (0:180:3 and 1.2);
\draw[red!50] (0,-7) circle (1.5);
\draw[red!50] (-1.5,-7) arc (180:360:1.5 and 0.6);
\draw[red!50,densely dotted] (1.5,-7) arc (0:180:1.5 and 0.6);
\draw[blue] (1,0) circle (2);
\draw[blue] (-1,0) arc (180:360:2 and 0.8);
\draw[blue,densely dotted] (3,0) arc (0:180:2 and 0.8);
\draw[blue] (-7,0) circle (3);
\draw[blue] (-10,0) arc (180:360:3 and 1.2);
\draw[blue,densely dotted] (-4,0) arc (0:180:3 and 1.2);
\end{scope}
\end{tikzpicture}
\caption{The dimension-increasing inclusion $\eswc_{1,2}(\textcolor{red}{\sa}^3,\textcolor{blue}{\sm}^2;\textcolor{blue}{\sm}) \to \eswc_{1,3}(\textcolor{red}{\sa}^3,\textcolor{blue}{\sm}^2;\textcolor{blue}{\sm})$. The numbering of the $3$-discs has been omitted on the right-hand side to avoid overloading the diagram.}
\label{fig:dimension-increasing}
\end{figure}

\begin{lem}
\label{l:linear-to-concentric}
For any $0\leq m\leq n$ there is a map of operads\footnote{Strictly speaking, the map $\iota_{m,n}$ that we construct is only a homotopy map of operads: it commutes only up to homotopy with operadic composition. After applying any homotopy-invariant functor of spaces, such as homology, this of course induces a strict map of operads.}
\begin{equation}
\iota_{m,n} \colon \label{eq:linear-to-concentric}
\lswc_n \too \cswc_{m,n}
\end{equation}
commuting with \eqref{eq:dimension-increasing} and \eqref{eq:dimension-increasing-2} in the sense that the following diagrams commute up to homotopy:
\begin{equation}
\label{eq:linear-to-concentric-diagrams}
\centering
\begin{split}
\begin{tikzpicture}
[x=1mm,y=1mm]
\node (tl) at (0,12) {$\lswc_n$};
\node (tr) at (30,12) {$\cswc_{m,n}$};
\node (bl) at (0,0) {$\lswc_{n+1}$};
\node (br) at (30,0) {$\cswc_{m,n+1}$};
\draw[->] (tl) to node[above,font=\small]{$\iota_{m,n}$} (tr);
\draw[->] (bl) to node[below,font=\small]{$\iota_{m,n+1}$} (br);
\draw[->] (tl) to node[left,font=\small]{\eqref{eq:dimension-increasing}} (bl);
\draw[->] (tr) to node[right,font=\small]{\eqref{eq:dimension-increasing}} (br);
\begin{scope}[xshift=60mm]
\node (l) at (0,6) {$\lswc_n$};
\node (tr) at (30,12) {$\cswc_{m-1,n}$};
\node (br) at (30,0) {$\cswc_{m,n}$};
\draw[->] (l) to node[above,font=\small]{$\iota_{m-1,n}$} (tr);
\draw[->] (l) to node[below,font=\small]{$\iota_{m,n}$} (br);
\draw[->] (tr) to node[right,font=\small]{\eqref{eq:dimension-increasing-2}} (br);
\end{scope}
\end{tikzpicture}
\end{split}
\end{equation}
When $0\leq m\leq n-1$, the map $\iota_{m,n}$ is a weak equivalence of operads, so $\eqref{eq:dimension-increasing-2} \colon \cswc_{m-1,n} \to \cswc_{m,n}$ is also a weak equivalence of operads in this range.
\end{lem}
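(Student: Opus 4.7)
The plan is to construct $\iota_{m,n}$ as an explicit ``log-polar'' embedding of the cuboid into the upper half of the disc $D^n$, verify strict compatibility with operadic composition (up to a continuous rescaling of radii), and then deduce the weak-equivalence claim by identifying both sides with ordinary configuration spaces in $\bR^n$.

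On the colour $\sa$ both operads restrict to $\disc_n$, so $\iota_{m,n}$ is the identity there. On module operations, fix a smooth embedding $\Psi\colon[-1,1]^{n-1}\hookrightarrow S^{n-1}$ whose image is the closed hemisphere $S^{n-1}\cap\bar H$, where $\bar H = \bR^m\times[0,\infty)^{n-m}$, and which restricts to a diffeomorphism from $\mathrm{int}([-1,1]^{n-1})$ onto the open hemisphere $S^{n-1}\cap(\bR^m\times(0,\infty)^{n-m})$. Define $\Phi_t\colon [0,t]\times[-1,1]^{n-1}\longrightarrow D^n$ by $(s,\vec y)\mapsto e^{s-t}\Psi(\vec y)$; its image is the annular region $\{x\in D^n\cap\bar H : e^{-t}\leq |x|\leq 1\}$. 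An operation $((x_i,r_i)_{i=1}^k;t)\in\lswc_n(\sa^k,\sm;\sm)$ is sent to the $\cswc_{m,n}$-operation with central $\sm$-disc of radius $e^{-t}$ at the origin and sa-discs centred at $\Phi_t(x_i)$, with radii scaled by a uniform continuous factor chosen so that conditions (i), (ii), (v) and (vi) of Definitions~\ref{d:little-discs} and \ref{d:concentric-swiss-cheese} are satisfied.

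The key observation is that concatenating two cuboids of lengths $t_1,t_2$ in $\lswc_n$ gives a cuboid of length $t_1+t_2$, whose image under $\Phi_{t_1+t_2}$ is the $\Phi_{t_1}$-image with the $\Phi_{t_2}$-image (uniformly dilated by $e^{-t_1}$) inserted into the central $\sm$-disc of the first. Hence $\iota_{m,n}$ respects $\cswc_{m,n}$-composition on the underlying centres, and the accompanying rescaling of sa-disc radii can be corrected by a straight-line homotopy, yielding the homotopy map of operads. Compatibility with the dimension-change maps \eqref{eq:dimension-increasing} and \eqref{eq:dimension-increasing-2} in the diagrams \eqref{eq:linear-to-concentric-diagrams} is obtained by choosing the $\Psi$'s consistently under the corresponding inclusions of hemispheres, with any remaining discrepancy absorbed by a straight-line homotopy.

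For the weak-equivalence claim when $m\leq n-1$: the open region $U = D^n\cap(\bR^m\times(0,\infty)^{n-m})$ is convex, does not contain the origin (since $n>m$), and is homotopy equivalent to $\bR^n$. Simultaneously shrinking all sa-radii (and the central sm-radius) to $0$ defines deformation retractions $\lswc_n(\sa^k,\sm;\sm)\simeq F_k([0,\infty)\times[-1,1]^{n-1})\simeq F_k(\bR^n)$ and $\cswc_{m,n}(\sa^k,\sm;\sm)\simeq F_k(U)\simeq F_k(\bR^n)$, on which $\iota_{m,n}$ is induced by the diffeomorphism $(0,\infty)\times\mathrm{int}([-1,1]^{n-1})\xrightarrow{\cong}\mathrm{int}(U)$ given by the same log-polar formula. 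This makes $\iota_{m,n}$ a weak equivalence on the module operations; on $\sa$-only operations it is the identity, and all other operation spaces are empty on both sides. The final statement, that $\eqref{eq:dimension-increasing-2}\colon\cswc_{m-1,n}\to\cswc_{m,n}$ is a weak equivalence for $1\leq m\leq n-1$, then follows by the $2$-out-of-$3$ property from the triangular diagram in \eqref{eq:linear-to-concentric-diagrams}. The main technical step will be the continuous, $\Sigma_k$-equivariant choice of radii rescaling; this reduces to bounds on the Jacobian of $\Phi_t$, which are uniform on compact subsets of the cuboid interior.
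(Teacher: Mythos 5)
Your construction is essentially the paper's: the same exponential/log-polar map of the cuboid $[0,t]\times[-1,1]^{n-1}$ onto the annular sector $\{x \in D^n : e^{-t}\leq\lvert x\rvert\leq 1\}\cap(\bR^m\times[0,\infty)^{n-m})$, with the radius labels handled up to homotopy, the weak equivalence for $m\leq n-1$ deduced from the fact that the underlying map is a homeomorphism onto a proper sector, and the claim about $\eqref{eq:dimension-increasing-2}$ obtained by two-out-of-three from the homotopy-commuting triangle. The one correction needed is the boundary case $m=n$ (which the lemma does include): there $S^{n-1}\cap(\bR^m\times[0,\infty)^{n-m})=S^{n-1}$, so $\Psi$ cannot be an embedding of $[-1,1]^{n-1}$ and must instead be taken to be a quotient map onto the whole sphere -- which is exactly why $\iota_{n,n}$ fails to be a weak equivalence, as the paper also notes.
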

\begin{proof}
The map $\iota_{m,n}$ is defined as pictured (for the cases $(m,n)=(1,2)$ and $(m,n)=(0,2)$) in Figure \ref{fig:iota}: a configuration in the rectangle is mapped via the indicated mapping of the rectangle onto a segment of the annulus, together with an appropriate rescaling of the labels in $(0,\infty)$ of the configuration points (interpreted as radii). If the rescaling is done carefully, this gives a well-defined map of the labelled configuration spaces underlying the operads, which commutes on the nose with the operadic composition of the form $(\sa^k,\sm;\sm) \times (\sa^l,\sm;\sm) \to (\sa^{k+l},\sm;\sm)$, and commutes up to homotopy with the operadic composition of the form $(\sa^k,\sm;\sm) \times (\sa^l;\sa) \to (\sa^{k+l-1},\sm;\sm)$. The fact that $\iota_{m,n}$ is a weak equivalence of operads for $m\leq n-1$ follows from the fact that this mapping is a homeomorphism from the rectangle onto a proper segment of the annulus (when $m=n$ it is a quotient map from the rectangle onto the whole annulus). The left-hand square of \eqref{eq:linear-to-concentric-diagrams} commutes on the nose, and the right-hand triangle of \eqref{eq:linear-to-concentric-diagrams} commutes up to a ``stretching'' homotopy that is also pictured in Figure \ref{fig:iota} (in the case $(m,n) = (1,2)$).
\end{proof}

\begin{figure}[t]
\centering
\begin{tikzpicture}
[x=1mm,y=1mm]

\fill[green!20] (-12,-7) rectangle (12,7);
\draw[blue] (-12,-7) -- (-12,7);
\draw[blue] (12,-7) -- (12,7);
\draw[black!20] (-12,-7) -- (12,-7);
\draw[black!20] (-12,7) -- (12,7);
\draw[green!50!black] (-0.5,6)--(0.5,7)--(-0.5,8);
\draw[green!50!black] (-0.1,-6)--(0.9,-7)--(-0.1,-8);
\draw[green!50!black] (-0.9,-6)--(0.1,-7)--(-0.9,-8);
\node at (-12,-7) [anchor=north,font=\small] {$0$};
\node at (12,-7) [anchor=north,font=\small] {$t$};

\begin{scope}[xshift=100mm]
\fill[green!20] (0,12) arc (90:0:12) -- (3,0) arc (0:90:3) -- cycle;
\draw[blue] (0,0) circle (12);
\draw[black!20] (-14,0) -- (14,0);
\draw[black!20] (0,-14) -- (0,14);
\draw[blue] (0,0) circle (3);
\draw[green!50!black] (-1,8)--(0,7)--(1,8);
\node at (4,0) [anchor=north,font=\footnotesize] {$e^{-t}$};
\draw[green!50!black] (8.4,1)--(7.4,0)--(8.4,-1);
\draw[green!50!black] (7.6,1)--(6.6,0)--(7.6,-1);
\end{scope}

\begin{scope}[xshift=100mm,yshift=-40mm]
\fill[green!20] (0,12) arc (90:0:12) -- (3,0) arc (0:90:3) -- cycle;
\draw[blue] (0,0) circle (12);
\draw[black!20] (-14,0) -- (14,0);
\draw[black!20] (0,3) -- (0,12);
\draw[blue] (0,0) circle (3);
\draw[green!50!black] (-1,8)--(0,7)--(1,8);
\node at (4,0) [anchor=north,font=\footnotesize] {$e^{-t}$};
\draw[green!50!black] (8.4,1)--(7.4,0)--(8.4,-1);
\draw[green!50!black] (7.6,1)--(6.6,0)--(7.6,-1);
\end{scope}

\begin{scope}[xshift=30mm,yshift=-40mm]
\fill[green!20] (-12,0) arc (180:0:12) -- (3,0) arc (0:180:3) -- cycle;
\draw[blue] (0,0) circle (12);
\draw[black!20] (-14,0) -- (14,0);
\draw[blue] (0,0) circle (3);
\draw[green!50!black] (-8,1)--(-7,0)--(-8,-1);
\node at (4,0) [anchor=north,font=\footnotesize] {$e^{-t}$};
\draw[green!50!black] (8.4,1)--(7.4,0)--(8.4,-1);
\draw[green!50!black] (7.6,1)--(6.6,0)--(7.6,-1);
\end{scope}

\begin{scope}[xshift=65mm,yshift=-40mm]
\fill[green!20] (3,0) arc (0:135:3) -- (135:12) arc (135:0:12) -- cycle;
\draw[blue] (0,0) circle (12);
\draw[black!20] (-14,0) -- (14,0);
\draw[blue] (0,0) circle (3);
\draw[black!50,<->] (175:5.5) arc (175:95:5.5);
\draw[black!50,<->] (175:7.5) arc (175:95:7.5);
\draw[black!50,<->] (175:9.5) arc (175:95:9.5);
\end{scope}

\draw[|->] (17,0) to node[above,font=\small]{$\iota_{0,2}$} (83,0);
\draw[|->] (5,-12) to node[below left=-1mm,font=\small]{$\iota_{1,2}$} (18,-28);
\draw[|->] (100,-17) to node[right,font=\small]{\eqref{eq:dimension-increasing-2}} (100,-23);
\node at (47.5,-40) {$\sim$};
\node at (82.5,-40) {$\sim$};

\end{tikzpicture}
\caption{The map $\iota_{m,n}$ and the homotopy-commutativity of the right-hand triangle of \eqref{eq:linear-to-concentric-diagrams}.}
\label{fig:iota}
\end{figure}
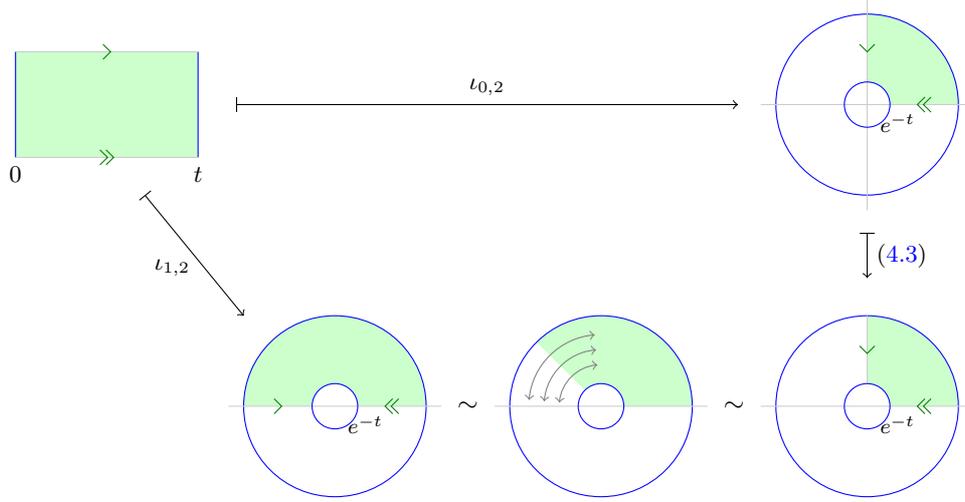

\paragraph{Algebras.}
Recall that, in general, an \emph{algebra} over a two-coloured operad $\mathsf{O}$ (with colours $\sa$ and $\sm$) consists of a pair of spaces $(X_\sa , X_\sm)$ together with maps
\[
\mathsf{O}(\sa^k , \sm^l ; \sa) \times (X_\sa)^k \times (X_\sm)^l \too X_\sa \qquad\text{and}\qquad \mathsf{O}(\sa^k , \sm^l ; \sm) \times (X_\sa)^k \times (X_\sm)^l \too X_\sm
\]
satisfying appropriate associativity axioms. In the following, we will write $Y = X_\sa$ (corresponding to the red colour in Figure \ref{fig:swiss-cheese}) and $X = X_\sm$ (corresponding to the blue colour in Figure \ref{fig:swiss-cheese}).

\paragraph{Algebras over Swiss-cheese operads.}
Algebras over $\disc_n$ are \emph{$E_n$-algebras}, by definition. Now, the restriction of $\swc_{m,n}$ to the $\sa$ colour is $\disc_n$ and its restriction to the $\sm$ colour is isomorphic to $\disc_m$, so an algebra over $\swc_{m,n}$ is a pair $(X,Y)$ consisting of an $E_n$-algebra $Y$, an $E_m$-algebra $X$, together with an additional structure intertwining them. The same remarks apply also to $\vswc_{m,n}$ and $\eswc_{m,n}$, so algebras over each of these three operads consist of an $E_n$-algebra acting on an $E_m$-algebra, where the precise meaning of ``acting'' depends on the flavour.

The restriction of the \emph{concentric} Swiss cheese operad $\cswc_{m,n}$ to the $\sa$ colour is again $\disc_n$, but now its restriction to the $\sm$ colour is trivial (it has no $l$-ary operations except when $l=1$, and its space of $1$-ary operations is contractible). Algebras over $\cswc_{m,n}$ are thus $E_m$-\emph{modules} over $E_n$-algebras, without any $E_m$-\emph{algebra} structure on the module. By Lemma \ref{l:linear-to-concentric}, we have $\cswc_{m,n} \simeq \cswc_{m',n}$ for any $m,m' \leq n-1$, at least after applying a homotopy-invariant functor of spaces, so the notions of ``\emph{$E_m$-module over an $E_n$-algebra}'', for fixed $n$, are equivalent for all $m \in \{0,1,\ldots,n-1\}$, and are equivalently encoded by the \emph{linear} Swiss cheese operad $\lswc_n$. On the other hand, the notion of ``\emph{$E_n$-module over an $E_n$-algebra}'' is stronger, and not encoded by the linear Swiss cheese operad.

\paragraph{Linear Swiss cheese structures on configuration-section spaces.}
Below, we define certain homotopy equivalent models $\dot{C}(M) \simeq C(\mathring{M})$ and $\cgammad{}{c,D}{M}{\xi} \simeq \cgamma{}{c,D}{M}{\xi}$ for configuration spaces and configuration-section spaces (see Definitions \ref{d:cdot} and \ref{d:cgammadot}, and Lemma \ref{l:equivalent-models}). Definition \ref{d:cgammadot} also explains how a singularity condition $c \subseteq \pi_0(\Sigma(\xi))$ determines a subset $c_D \subseteq [S^{d-1},X]$. We use a slight abuse of notation by writing $\cmap{}{c,\partial}{D^d}{X}$ instead of $\cmap{}{c_D,\partial}{D^d}{X}$. We also abbreviate $\partial D^d$ to $\partial$ and write $\frac12\partial$ for one hemisphere of $\partial D^d$.

\begin{prop}
\label{p:e0-modules}
We have the following linear Swiss cheese structures on configuration and configuration-section spaces:
\begin{compactenum}
\renewcommand{\theenumi}{\textup{(\roman{enumi})}}
\item\label{lsc1} $(\dot{C}(M),C(\mathring{D}^d))$ is an algebra over $\lswc_d$,
\item\label{lsc2} $(\cgammad{}{c,D}{M}{\xi},\cmap{}{c,\partial}{D^d}{X})$ is an algebra over $\lswc_d$,
\item\label{lsc3} $(\cgammad{}{c,D}{M}{\xi},\cmap{}{c,\frac12\partial}{D^d}{X})$ is an algebra over $\lswc_{d-1}$.
\end{compactenum}
Moreover, the maps of pairs
\begin{equation}
\label{lsc-maps}
(\cgammad{}{c,D}{M}{\xi},\cmap{}{c,\partial}{D^d}{X}) \hookrightarrow (\cgammad{}{c,D}{M}{\xi},\cmap{}{c,\frac12\partial}{D^d}{X}) \to (\dot{C}(M),C(\mathring{D}^d))
\end{equation}
are maps of $\lswc_{d-1}$-algebras, and their composition is a map of $\lswc_d$-algebras. Here, the first map is the inclusion and the second map sends a configuration-section to its underlying configuration, forgetting the section.
\end{prop}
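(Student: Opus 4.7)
The plan is to construct all three algebra structures by the same geometric recipe --- insertion of the little discs of an operadic operation into a cuboid glued to a distinguished collar of the manifold --- and then verify the compatibility with the maps \eqref{lsc-maps} by direct inspection. I work in the explicit models of Definitions \ref{d:cdot} and \ref{d:cgammadot}, in which $\dot{M}$ denotes $M$ equipped with a half-infinite collar $[0,\infty)\times[-1,1]^{d-1}$ attached along $D\cong D^{d-1}$, $\xi$ has been extended over this collar via a chosen local trivialisation compatible with $s_D$, and a point of $\dot{C}(M)$ (respectively $\cgammad{}{c,D}{M}{\xi}$) consists of a configuration (respectively a configuration plus a section of the extended $\xi$) in $\dot{M}$ occupying a bounded portion of the collar, with the section equal to a fixed reference section beyond that portion.

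For part \ref{lsc1}, given $(\phi,t)\in\lswc_d(\sa^k,\sm;\sm)$, a configuration $z\in\dot{C}(M)$ of collar-width $\leq s$, and $w_1,\ldots,w_k\in C(\mathring{D}^d)$, the action translates the cuboid $[0,t]\times[-1,1]^{d-1}$ of $\phi$ onto the collar as $[s,s+t]\times[-1,1]^{d-1}$ and inserts each $w_i$ into the $i$-th little disc of $\phi$ via the associated affine identification with $D^d$. Continuity and associativity are inherited from the operadic composition in $\lswc_d$. For part \ref{lsc2}, the same recipe is applied on configurations, while the global section is built from three pieces: (a) the original section on the unchanged portion of $\dot{M}$; (b) the reference (basepoint-constant) section on the new cuboid outside the inserted little discs; and (c) the pullback of each map $f_i\colon D^d\smallsetminus w_i\to X$ into the $i$-th little disc. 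These agree on overlaps because the chosen trivialisation identifies the reference section over the collar with the constant section at the basepoint of $X$, and the boundary condition $c,\partial$ on each $f_i$ guarantees matching on each little-disc boundary. The singularity condition $c$ is preserved at newly inserted points (germs inherited from the $f_i$) and at old configuration points (which are untouched).

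Part \ref{lsc3} applies the analogous construction with $\lswc_{d-1}$-operations, thickened to the ambient $d$-dimensional collar: the $(d-1)$-dimensional cuboid $[0,t]\times[-1,1]^{d-2}$ is embedded as the slab $[0,t]\times[-1,1]^{d-2}\times[-1,1]$, and each little $(d-1)$-disc is thickened to a $d$-disc of the same radius so as to accommodate a $\cmap{}{c,\frac12\partial}{D^d}{X}$-insertion. The weakened boundary condition $c,\frac12\partial$ is exactly what is needed for reference-vs-pullback agreement: once the thickening is arranged so that the trivialising hemisphere of $\partial D^d$ faces the portion of the thickened disc's boundary where the ambient reference section sits, the global section remains continuous; the complementary hemisphere is absorbed into the transverse direction of the slab, where there is no gluing constraint. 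Associativity again reduces to that of $\lswc_{d-1}$.

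The compatibility statements for \eqref{lsc-maps} follow directly from the recipe. The inclusion $\cmap{}{c,\partial}{D^d}{X}\hookrightarrow\cmap{}{c,\frac12\partial}{D^d}{X}$ intertwines the $\lswc_d$-action of \ref{lsc2}, restricted along the operadic inclusion $\lswc_{d-1}\hookrightarrow\lswc_d$ of \eqref{eq:dimension-increasing}, with the $\lswc_{d-1}$-action of \ref{lsc3}; and the subsequent forgetful map to underlying configurations is a map of $\lswc_d$-algebras because that action is defined by literally discarding the section data from \ref{lsc2}. The main technical obstacle is verifying continuity of the section-extension step in parts \ref{lsc2} and --- more delicately --- \ref{lsc3}, which rests on a careful choice of reference section over the collar compatible with the basepoint of $X$ and, in part \ref{lsc3}, on a suitable thickening of $(d-1)$-discs to $d$-discs that is canonical up to a contractible space of choices.
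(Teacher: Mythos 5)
Your construction is correct and is essentially the paper's own proof: the paper defines exactly these actions by picture (Figures \ref{fig:lscd-action} and \ref{fig:lscd-1-action}), gluing a cuboid onto the collar, inserting the little discs, extending the section by the constant reference section outside them, and in part \ref{lsc3} thickening the little $(d-1)$-discs to $d$-discs centred on the equatorial slice of the slab. The one step you leave slightly implicit --- how to define the section on the region of the slab between a thickened disc's northern hemisphere and the opposite face, which \emph{does} have to glue continuously to the reference section on either side --- is resolved in the paper by extending the map to be constant in the transverse direction there, which is compatible precisely because the $\tfrac12\partial$ boundary condition forces the equator of each inserted disc to the basepoint of $X$.
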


Point \ref{lsc1} is essentially \cite[Lemma 5.1]{Krannich2019Homologicalstabilitytopological}, and part of points \ref{lsc2} and \ref{lsc3} -- the $\disc_d$-algebra structure, but not the $\lswc_d$- and $\lswc_{d-1}$-algebra structures -- is \cite[Propositions 2.6.1 and 2.6.2]{EllenbergVenkateshWesterland2012HomologicalstabilityHurwitz}. Before proving Proposition \ref{p:e0-modules}, we first define the appropriate models for configuration and configuration-section spaces referred to above.

\begin{defn}
\label{d:Mhat}
Let $M$ be a manifold equipped with an embedded codimension-zero disc $D \subseteq \partial M$ in its boundary and a collar neighbourhood of $\partial M$, namely an embedding $b \colon (-\infty,0] \times \partial M \to M$ such that $b(0,x)=x$ for all $x \in \partial M$. Define:
\[
\hat{M} = M \cup_b (\bR \times D) \qquad\text{and}\qquad \hat{M}_r = M \cup_b ((-\infty,r] \times D)
\]
for $r \in [0,\infty)$. Diagrammatically, $\hat{M}$ may be seen as follows, where $M$ is green and $\bR \times D$ is blue (and hence $(-\infty,0] \times D$, which is identified with $b((-\infty,0] \times D) \subseteq M$, is turquoise).
\begin{center}
\begin{tikzpicture}
[x=1mm,y=1mm]
\path[use as bounding box] (0,-15) rectangle (80,15);

\fill[green!20] (0,-15) rectangle (40,15);
\fill[green!30] (30,-5) rectangle (40,5);
\fill[blue!40,opacity=0.5] (30,-5) rectangle (40,5);
\fill[blue!30] (40,-5) rectangle (80,5);
\draw[black!20] (30,-15) -- (30,15);
\draw[black!20] (30,-5) -- (50,-5);
\draw[black!20] (30,5) -- (50,5);
\draw (40,-15) -- (40,15);
\draw (40,5) -- (80,5);
\draw (40,-5) -- (80,-5);
\draw (55,-5) -- (55,5);
\node at (30,5) [fill,inner sep=1pt] {};
\node at (40,5) [fill,inner sep=1pt] {};
\node at (55,5) [fill,inner sep=1pt] {};
\node at (80,5) [fill,inner sep=1pt] {};

\node at (5,0) {$M$};
\node at (35,-10) {$\mathrm{im}(b)$};
\draw[decorate,decoration={brace,amplitude=3pt,mirror}] (41,-4.5) -- (41,4.5);
\node at (41.5,0) [anchor=west] {$D$};
\node at (39.5,-15) [anchor=south west] {$\partial M$};
\node at (30,5) [font=\small,anchor=south east] {$-\infty$};
\node at (40,5) [font=\small,anchor=south west] {$0$};
\node at (55,5) [font=\small,anchor=south] {$r$};
\node at (80,5) [font=\small,anchor=south] {$\infty$};
\draw[black!50,decorate,decoration={brace,amplitude=5pt,mirror}] (90,-15) -- (90,15);
\node at (91,0) [anchor=west] {$\hat{M}$};

\end{tikzpicture}
\end{center}
\end{defn}

\begin{defn}
\label{d:cdot}
Let $\dot{C}_k(M)$ be the subspace of $C_k(\hat{M}) \times (0,\infty)$ of pairs $(z,t)$ with $z \subseteq \mathrm{int}(\hat{M}_t)$, and define
\[
\dot{C}(M) = \bigsqcup_{k \in \bN} \dot{C}_k(M).
\]
\end{defn}

\begin{defn}
\label{d:cgammadot}
Let $M$, $D$ and $b$ be as in Definition \ref{d:Mhat}. Also choose a bundle $\xi \colon E \to M$, a subset $c \subseteq \pi_0(\Sigma(\xi))$ (\cf Definition \ref{s:singularity}) and a section $s_D$ of $\xi|_D$. Denote by $*$ the centre of the disc $D \subseteq \partial M$, let $X = \xi^{-1}(*)$ and choose a point $x_0 \in X$. Choose a trivialisation $\varphi \colon \xi|_D \cong D \times X$ such that $s_D$ corresponds to the constant section of $D \times X$ at $x_0$. Using $\varphi$, glue $\xi$ along $D = D \times \{0\}$ to the trivial $X$-bundle over $[0,\infty) \times D$ to obtain a bundle on $\hat{M}$, which we denote by $\hat{\xi}$. We then have inclusions of bundles $\xi|_D \hookrightarrow \xi = \hat{\xi}|_M \hookrightarrow \hat{\xi}$, which induce inclusions of covering spaces $\eta(\xi|_D) \hookrightarrow \eta(\xi) \hookrightarrow \eta(\hat{\xi})$. Looking at the components of their total spaces, these induce maps
\[
[S^{d-1},X] \cong \pi_0(\Sigma(\xi|_D)) \longtwoheadrightarrow \pi_0(\Sigma(\xi)) \xrightarrow{\;\cong\;} \pi_0(\Sigma(\hat{\xi})),
\]
where the left-hand bijection is induced by the trivialisation $\varphi$ of $\xi|_D$ and the fact that $\eta$ of a trivial bundle over an orientable $d$-manifold with fibre $X$ is the trivial covering space with fibre $[S^{d-1},X]$ (see Example \ref{eg:trivial-bundle}). The right-hand map is clearly a bijection, since we extended $\xi$ to $\hat{\xi}$ by gluing it to a trivial bundle. The middle map is always surjective, and it is injective if and only if $\eta(\xi) \colon \Sigma(\xi) \to \mathring{M}$ is a trivial covering. By taking images and pre-images, the subset $c \subseteq \pi_0(\Sigma(\xi))$ determines subsets
\[
c_D \subseteq [S^{d-1},X] \qquad\text{and}\qquad \hat{c} \subseteq \pi_0(\Sigma(\hat{\xi})).
\]
Define $\cgammad{k}{c,D}{M}{\xi}$ to be the subspace of $\cgamma{k}{\hat{c}}{\hat{M}}{\hat{\xi}} \times (0,\infty)$ of elements $(z,s,t)$ consisting of a real number $t > 0$ and a configuration-section $(z,s)$ such that
\begin{itemizeb}
\item $z \subseteq \mathrm{int}(\hat{M}_t)$,
\item $s$ is the constant section at $x_0$ on the subspace $[t,\infty) \times D \subseteq \hat{M}$,
\end{itemizeb}
and let
\[
\cgammad{}{c,D}{M}{\xi} = \bigsqcup_{k \in \bN} \cgammad{k}{c,D}{M}{\xi}.
\]
We will also use the following slight abuse of notation:
\[
\cmap{}{c,\partial}{D^d}{X} = \bigsqcup_{k \in \bN} \cmap{k}{c_D,\partial D^d}{D^d}{X} \quad\text{,}\quad \cmap{}{c,\frac12\partial}{D^d}{X} = \bigsqcup_{k \in \bN} \cmap{k}{c_D,\partial_0 D^d}{D^d}{X},
\]
where $\partial_0 D^d = \partial D^d \cap \{ x_d \leq 0 \}$ is the southern hemisphere of $\partial D^d$.
\end{defn}

\begin{lem}
\label{l:equivalent-models}
For each $k$, there are natural embeddings
\[
C_k(M) \longhookrightarrow \dot{C}_k(M) \qquad\text{and}\qquad \cgamma{k}{c,D}{M}{\xi} \longhookrightarrow \cgammad{k}{c,D}{M}{\xi}
\]
admitting deformation retractions. In particular, $C(M) \simeq \dot{C}(M)$ and $\cgamma{}{c,D}{M}{\xi} \simeq \cgammad{}{c,D}{M}{\xi}$.
\end{lem}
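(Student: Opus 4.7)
The plan is to construct a smooth family of self-diffeomorphisms $\Phi_t \colon \hat{M} \to \hat{M}$ (for $t \geq 0$), supported in the super-collar of $D$, satisfying $\Phi_t(\hat{M}_t) = M$, and then to use these together with canonical bundle lifts to define strict sections of strict retractions at both levels.

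Concretely, I would choose a smooth family of orientation-preserving diffeomorphisms $\phi_t \colon \bR \to \bR$ (continuous in $t \geq 0$), with $\phi_0 = \mathrm{id}$, $\phi_t(t) = 0$, and $\phi_t|_{(-\infty,-1]} = \mathrm{id}$. Let $\Phi_t$ be the resulting self-diffeomorphism of $\hat{M}$, acting as $\phi_t \times \mathrm{id}_D$ on the super-collar (identified via $b$ and the gluing from Definition \ref{d:Mhat}) and as the identity elsewhere; then $\Phi_t(\hat{M}_t) = M$ and $\Phi_t(\mathrm{int}(\hat{M}_t)) = \mathring{M}$. Because $\hat{\xi}$ admits a canonical trivialisation on the super-collar---on $[0,\infty) \times D$ by construction of $\hat{\xi}$, and on the collar $b((-\infty,0] \times D)$ because that collar deformation-retracts onto $D$ where $\xi|_D$ is trivialised by $\varphi$ (the space of such extensions being contractible)---each $\Phi_t$ lifts to a bundle automorphism $\hat{\Phi}_t$ of $\hat{\xi}$ acting as $\phi_t \times \mathrm{id}_D \times \mathrm{id}_X$ in the trivialising coordinates and as the identity elsewhere.

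With these in hand, I would define the embeddings by $\iota(z) = (\Phi_1^{-1}(z), 1)$ in the unlabelled case, and by $\iota(z,\sigma) = (\Phi_1^{-1}(z), \tilde{\sigma}, 1)$ in the labelled case, where $\tilde{\sigma}$ equals $\hat{\Phi}_1^{-1} \circ \sigma \circ \Phi_1$ on $\hat{M}_1 \smallsetminus \Phi_1^{-1}(z)$ and is the constant section at $x_0$ on $(1,\infty) \times D$; these glue continuously because $\sigma|_D = s_D$ corresponds to $x_0$ under $\varphi$. The strict retraction is $r(z,t) = \Phi_t(z)$, respectively $r(z,\sigma,t) = (\Phi_t(z), \hat{\Phi}_t \circ \sigma \circ \Phi_t^{-1})$, and both satisfy $r \circ \iota = \mathrm{id}$. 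The strong deformation-retract homotopy is
\[
H\bigl((z,\sigma,t), s\bigr) = \bigl(\Phi_{t_s}^{-1} \Phi_t(z),\; \hat{\Phi}_{t_s}^{-1} \circ \hat{\Phi}_t \circ \sigma \circ \Phi_t^{-1} \circ \Phi_{t_s},\; t_s\bigr), \qquad t_s = (1-s)t + s,
\]
which equals the identity at $s = 0$, equals $\iota \circ r$ at $s = 1$, and fixes $\iota(\cgamma{k}{c,D}{M}{\xi})$ pointwise because $t = 1$ forces $t_s = 1$ identically.

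The main technical point is the coherent construction of the bundle lifts $\hat{\Phi}_t$; once these are in place, the three defining properties of $\cgammad{k}{c,D}{M}{\xi}$ are preserved by $H$ essentially by inspection, using that $\Phi_{t_s}^{-1}\Phi_t$ carries $\hat{M}_t$ diffeomorphically onto $\hat{M}_{t_s}$, that $\hat{\Phi}_{\bullet}$ acts trivially on the $X$-factor of the trivialisation (so both the Dirichlet condition $s|_D = s_D$ and the tail condition ``constant at $x_0$ on $[t,\infty) \times D$'' transport correctly), and that singularity classes in $\pi_0(\Sigma(\hat{\xi}))$ are invariant under diffeomorphism-plus-bundle-isomorphism pushforward. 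Specialising to the trivial bundle recovers the unlabelled statement for $C_k(\mathring{M}) \hookrightarrow \dot{C}_k(M)$.
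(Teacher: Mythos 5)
Your strategy is the same as the paper's: include at parameter $t=1$ and deformation-retract by an isotopy of $\hat{M}$ that compresses $\hat{M}_t$ into $M$, lifted to $\hat{\xi}$ via a trivialisation over the (super-)collar and applied to sections by conjugation. The difference lies only in how the compressing isotopy is built, and that is where your argument breaks: the map $\Phi_t$ that acts as $\phi_t \times \mathrm{id}_D$ on the cylinder $\bR \times D \subseteq \hat{M}$ (the collar over $D$ glued to $[0,\infty)\times D$) and as the identity elsewhere is not continuous, hence not a self-diffeomorphism. The frontier of that cylinder in $\hat{M}$ contains $b((-1,0)\times \partial D)$, which lies in $\mathring{M}$; for $t>0$ one has $\phi_t(0)<0$ (since $\phi_t$ is increasing and $\phi_t(t)=0$), so $\phi_t(v)\neq v$ for $v<0$ near $0$, and the corresponding points $b(v,x)$ with $x\in\partial D$ are moved by $\Phi_t$, while arbitrarily nearby points $b(v,x')$ with $x'\in\partial M\smallsetminus D$ are fixed. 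Consequently $\hat{\Phi}_t\circ\sigma\circ\Phi_t^{-1}$ is not a continuous section either. Tapering $\phi_t$ off near $\partial D$ does not repair this, since then $\Phi_t(\hat{M}_t)\subseteq M$ fails for points of $(0,t)\times D$ near $\partial D$, and $r(z,t)=\Phi_t(z)$ no longer lands in $C_k(\mathring{M})$.

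The paper's proof addresses exactly this point by supporting the isotopy on the collar of the \emph{whole} boundary, $N=(\partial M\times(-\infty,0])\cup(D\times[0,\infty))$, translating by the uniform amount $-ut$ in the collar coordinate so that no mismatch occurs along $\partial D$. The cost is that $\varphi(u,t)$ is then only a self-embedding rather than a diffeomorphism: its image omits the sliver $(\partial M\smallsetminus D)\times(-ut,0]$, so the conjugated section is a priori undefined there and must be extended by hand (the paper takes it constant in the collar direction, using the identification $\hat{\xi}|_N\cong p^*(\hat{\xi}|_{\partial M})$). If you enlarge the support of your isotopy in this way and add the corresponding extension step in place of the literal $\Phi_t^{-1}$, the rest of your argument (the canonical trivialisation of $\hat{\xi}$ over the super-collar, preservation of the tail condition and of the singularity classes, and the observation that $t=1$ forces $t_s\equiv 1$) goes through.
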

\begin{proof}
We will prove this just for the second embedding (for configuration-section spaces), since the proof for the first embedding (for configuration spaces) is identical, forgetting the sections and considering just configurations of points. The embedding is defined by sending a configuration-section $(z,s)$ of $\xi$ to the element $(z,\hat{s},1)$ of $\cgammad{k}{c,D}{M}{\xi}$, where $\hat{s}$ is the section of $\hat{\xi}|_{\hat{M} \smallsetminus z}$ given by extending $s$ by the constant section at $x_0$ on $D \times [0,\infty)$.

We now construct a deformation retraction of $\cgammad{k}{c,D}{M}{\xi}$ onto the image of this embedding. First, we choose a family of self-embeddings $\varphi \colon [0,1] \times [0,\infty) \to \mathrm{Emb}(\hat{M},\hat{M})$ with the properties that (i) $\varphi(0,t) = \mathrm{id}$ and (ii) $\varphi(1,t)(\hat{M}_t) \subseteq M$. Namely, we define $\varphi(u,t)$ to be the identity outside of $N = (\partial M \times (-\infty,0]) \cup (D \times [0,\infty))$, and for a point $(x,v)$ of $N$, define $\varphi(u,t)(x,v) = (x,v-ut)$. This may be lifted to a family of self-embeddings $\widetilde{\varphi} \colon [0,1] \times [0,\infty) \to \mathrm{Emb}(\hat{\xi},\hat{\xi})$ such that $\widetilde{\varphi}(u,t)$ covers $\varphi(u,t)$ and $\widetilde{\varphi}(0,t) = \mathrm{id}$, using the observation that the obvious projection $p \colon N \to \partial M$ is a homotopy equivalence, and so we may identify $\hat{\xi}|_N$ with $p^*(\hat{\xi}|_{\partial M})$. The deformation retraction is defined by sending $(z,s,t)$, at time $u \in [0,1]$, to:
\[
\bigl( \varphi(u,t)(z) \, , \, \widetilde{\varphi}(u,t) \circ s \circ \varphi(u,t)^{-1} \, , \, (1-u)t+u \bigr) .
\]
The section $\widetilde{\varphi}(u,t) \circ s \circ \varphi(u,t)^{-1}$ is a priori defined only outside of $(\partial M \smallsetminus D) \times (-ut,0]$; we extend it to this region by defining it to be constant in the collar direction. This uses the identification $\hat{\xi}|_N = p^*(\hat{\xi}|_{\partial M})$ and the fact that the configuration $\varphi(u,t)(z)$ is disjoint from $(\partial M \smallsetminus D) \times (-ut,0]$.
\end{proof}

\begin{proof}[Proof of Proposition \ref{p:e0-modules}]
The $\lswc_d$ action on $(\cgammad{}{c,D}{M}{\xi},\cmap{}{c,\partial}{D^d}{X})$ is determined by maps
\begin{align}
\label{eq:lscd-action}
\begin{split}
\lswc_d(\sa^k;\sa) \times (\cmap{}{c,\partial}{D^d}{X})^k &\too \cmap{}{c,\partial}{D^d}{X} \\
\lswc_d(\sa^k,\sm;\sm) \times (\cmap{}{c,\partial}{D^d}{X})^k \times \cgammad{}{c,D}{M}{\xi} &\too \cgammad{}{c,D}{M}{\xi},
\end{split}
\end{align}
which are defined by picture in Figure \ref{fig:lscd-action}. The $\lswc_{d-1}$ action on $(\cgammad{}{c,D}{M}{\xi},\cmap{}{c,\frac12\partial}{D^d}{X})$ is determined by maps
\begin{align}
\label{eq:lscd-1-action}
\begin{split}
\lswc_{d-1}(\sa^k;\sa) \times (\cmap{}{c,\frac12\partial}{D^d}{X})^k &\too \cmap{}{c,\frac12\partial}{D^d}{X} \\
\lswc_{d-1}(\sa^k,\sm;\sm) \times (\cmap{}{c,\frac12\partial}{D^d}{X})^k \times \cgammad{}{c,D}{M}{\xi} &\too \cgammad{}{c,D}{M}{\xi},
\end{split}
\end{align}
which are defined by picture in Figure \ref{fig:lscd-1-action}. The fact that these are well-defined actions of linear Swiss cheese operads may be verified easily from the construction. This gives the action of $\lswc_d$ for case \ref{lsc2} and of $\lswc_{d-1}$ for case \ref{lsc3}. The action of $\lswc_d$ for case \ref{lsc1} is identical to that for case \ref{lsc2}, forgetting all sections and remembering just the configurations (compare the bottom line of Figure \ref{fig:lscd-action} with \cite[Figure 4]{Krannich2019Homologicalstabilitytopological}). The statements that the maps \eqref{lsc-maps} respect the $\lswc_{d-1}$ structures (and that their composition respects the $\lswc_d$ structures) is also clear from the construction.
\end{proof}

\begin{figure}[t]
\centering
\begin{tikzpicture}
[x=1mm,y=1mm]

\begin{scope}[xshift=6mm]
\draw[red!50] (0,0) circle (12);
\draw[red!50] (4,5) node[font=\footnotesize]{$1$} circle (2.5);
\draw[red!50] (-5,2) node[font=\footnotesize]{$3$} circle (4);
\draw[red!50] (7,-3) node[font=\footnotesize]{$2$} circle (3);
\end{scope}

\begin{scope}[xshift=30mm]
\fill[green!15] (0,0) circle (8);
\draw[green!50!black] (0,0) circle (8);
\node[green!50!black,fill,circle,inner sep=1pt] at (2,0) {};
\node[green!50!black,fill,circle,inner sep=1pt] at (4,-5) {};
\node[green!50!black,fill,circle,inner sep=1pt] at (-6,-1) {};
\node[green!50!black,fill,circle,inner sep=1pt] at (-1,4) {};
\end{scope}

\begin{scope}[xshift=50mm]
\fill[yellow!10] (0,0) circle (8);
\draw[green!50!black] (0,0) circle (8);
\node[green!50!black,fill,circle,inner sep=1pt] at (2,1) {};
\node[green!50!black,fill,circle,inner sep=1pt] at (-4,-4) {};
\end{scope}

\begin{scope}[xshift=70mm]
\fill[blue!20] (0,0) circle (8);
\draw[green!50!black] (0,0) circle (8);
\node[green!50!black,fill,circle,inner sep=1pt] at (-4,5) {};
\node[green!50!black,fill,circle,inner sep=1pt] at (1,1) {};
\node[green!50!black,fill,circle,inner sep=1pt] at (3,-5) {};
\end{scope}

\node at (93,0) {$\longmapsto$};

\begin{scope}[xshift=120mm]
\fill[black!3] (0,0) circle (12);
\draw[green!50!black] (0,0) circle (12);
  \begin{scope}[xshift=4mm,yshift=5mm,scale=0.3125]
  \fill[green!15] (0,0) circle (8);
  \draw[black!20] (0,0) circle (8);
  \node[green!50!black,fill,circle,inner sep=1pt] at (2,0) {};
  \node[green!50!black,fill,circle,inner sep=1pt] at (4,-5) {};
  \node[green!50!black,fill,circle,inner sep=1pt] at (-6,-1) {};
  \node[green!50!black,fill,circle,inner sep=1pt] at (-1,4) {};
  \end{scope}
  \begin{scope}[xshift=7mm,yshift=-3mm,scale=0.375]
  \fill[yellow!10] (0,0) circle (8);
  \draw[black!20] (0,0) circle (8);
  \node[green!50!black,fill,circle,inner sep=1pt] at (2,1) {};
  \node[green!50!black,fill,circle,inner sep=1pt] at (-4,-4) {};
  \end{scope}
  \begin{scope}[xshift=-5mm,yshift=2mm,scale=0.5]
  \fill[blue!20] (0,0) circle (8);
  \draw[black!20] (0,0) circle (8);
  \node[green!50!black,fill,circle,inner sep=1pt] at (-4,5) {};
  \node[green!50!black,fill,circle,inner sep=1pt] at (1,1) {};
  \node[green!50!black,fill,circle,inner sep=1pt] at (3,-5) {};
  \end{scope}
\end{scope}

\begin{scope}[yshift=-30mm,xshift=6mm,scale=0.75]
\draw[blue] (-12,-10) -- (-12,10);
\draw[blue] (12,-10) -- (12,10);
\draw[black!20] (-12,-10) -- (12,-10);
\draw[black!20] (-12,10) -- (12,10);
\draw[red!50] (8,2.5) node[font=\footnotesize]{$1$} circle (2.5);
\draw[red!50] (-5,4.5) node[font=\footnotesize]{$3$} circle (3);
\draw[red!50] (1,-4) node[font=\footnotesize]{$2$} circle (4);
\node at (-12,-10) [anchor=north,font=\small] {$0$};
\node at (12,-10) [anchor=north,font=\small] {$t$};
\end{scope}

\begin{scope}[yshift=-30mm,xshift=23mm,scale=0.5]
\fill[green!15] (0,0) circle (8);
\draw[green!50!black] (0,0) circle (8);
\node[green!50!black,fill,circle,inner sep=1pt] at (2,0) {};
\node[green!50!black,fill,circle,inner sep=1pt] at (4,-5) {};
\node[green!50!black,fill,circle,inner sep=1pt] at (-6,-1) {};
\node[green!50!black,fill,circle,inner sep=1pt] at (-1,4) {};
\end{scope}

\begin{scope}[yshift=-30mm,xshift=35mm,scale=0.5]
\fill[yellow!10] (0,0) circle (8);
\draw[green!50!black] (0,0) circle (8);
\node[green!50!black,fill,circle,inner sep=1pt] at (2,1) {};
\node[green!50!black,fill,circle,inner sep=1pt] at (-4,-4) {};
\end{scope}

\begin{scope}[yshift=-30mm,xshift=47mm,scale=0.5]
\fill[blue!20] (0,0) circle (8);
\draw[green!50!black] (0,0) circle (8);
\node[green!50!black,fill,circle,inner sep=1pt] at (-4,5) {};
\node[green!50!black,fill,circle,inner sep=1pt] at (1,1) {};
\node[green!50!black,fill,circle,inner sep=1pt] at (3,-5) {};
\end{scope}

\begin{scope}[yshift=-30mm,xshift=55mm,scale=0.75]
\fill[YellowOrange!10] (0,-15) rectangle (10,15);
\fill[YellowOrange!10] (10,-10) rectangle (20,10);
\fill[black!3] (20,-10) rectangle (30,10);
\draw[green!50!black] (10,-15) -- (10,-10);
\draw[black!20] (10,-10) -- (10,10);
\draw[green!50!black] (10,10) -- (10,15);
\draw[green!50!black] (10,10) -- (30,10);
\draw[green!50!black] (10,-10) -- (30,-10);
\draw[black!20] (20,-10) -- (20,10);
\node[green!50!black,fill,circle,inner sep=1pt] at (16,-7) {};
\node[green!50!black,fill,circle,inner sep=1pt] at (12,5) {};
\node[green!50!black,fill,circle,inner sep=1pt] at (2.5,-11) {};
\node[green!50!black,fill,circle,inner sep=1pt] at (7,-2) {};
\node[green!50!black,fill,circle,inner sep=1pt] at (5,8) {};
\node at (20,-10) [anchor=north,font=\small] {$t'$};
\end{scope}

\node at (84.25,-30) {$\longmapsto$};

\begin{scope}[yshift=-30mm,xshift=91mm,scale=0.75]
\fill[YellowOrange!10] (0,-15) rectangle (10,15);
\fill[YellowOrange!10] (10,-10) rectangle (20,10);
\fill[black!3] (20,-10) rectangle (54,10);
\draw[green!50!black] (10,-15) -- (10,-10);
\draw[black!20] (10,-10) -- (10,10);
\draw[green!50!black] (10,10) -- (10,15);
\draw[green!50!black] (10,10) -- (54,10);
\draw[green!50!black] (10,-10) -- (54,-10);
\draw[black!20] (20,-10) -- (20,10);
\draw[black!20] (44,-10) -- (44,10);
\node[green!50!black,fill,circle,inner sep=1pt] at (16,-7) {};
\node[green!50!black,fill,circle,inner sep=1pt] at (12,5) {};
\node[green!50!black,fill,circle,inner sep=1pt] at (2.5,-11) {};
\node[green!50!black,fill,circle,inner sep=1pt] at (7,-2) {};
\node[green!50!black,fill,circle,inner sep=1pt] at (5,8) {};
\node at (20,-10) [anchor=north,font=\small] {$t'$};
\node at (44,-10) [anchor=north,font=\small] {$t'+t$};
  \begin{scope}[xshift=40mm,yshift=2.5mm,scale=0.3125]
  \fill[green!15] (0,0) circle (8);
  \draw[green!50!black] (0,0) circle (8);
  \node[green!50!black,fill,circle,inner sep=1pt] at (2,0) {};
  \node[green!50!black,fill,circle,inner sep=1pt] at (4,-5) {};
  \node[green!50!black,fill,circle,inner sep=1pt] at (-6,-1) {};
  \node[green!50!black,fill,circle,inner sep=1pt] at (-1,4) {};
  \end{scope}
  \begin{scope}[xshift=33mm,yshift=-4mm,scale=0.5]
  \fill[yellow!10] (0,0) circle (8);
  \draw[green!50!black] (0,0) circle (8);
  \node[green!50!black,fill,circle,inner sep=1pt] at (2,1) {};
  \node[green!50!black,fill,circle,inner sep=1pt] at (-4,-4) {};
  \end{scope}
  \begin{scope}[xshift=27mm,yshift=4.5mm,scale=0.375]
  \fill[blue!20] (0,0) circle (8);
  \draw[green!50!black] (0,0) circle (8);
  \node[green!50!black,fill,circle,inner sep=1pt] at (-4,5) {};
  \node[green!50!black,fill,circle,inner sep=1pt] at (1,1) {};
  \node[green!50!black,fill,circle,inner sep=1pt] at (3,-5) {};
  \end{scope}
\end{scope}

\end{tikzpicture}
\caption{The maps \eqref{eq:lscd-action} defining the $\lswc_d$ action on $(\cgammad{}{c,D}{M}{\xi},\cmap{}{c,\partial}{D^d}{X})$, in dimension $d=2$ and for $k=3$. The light green, yellow, blue and orange colours represent sections defined on the complement of each configuration. Light grey indicates regions where the section is constant at the basepoint of $X$ (note that $\xi$ is trivial with fibre $X$ over the grey regions, so this makes sense).}
\label{fig:lscd-action}
\end{figure}
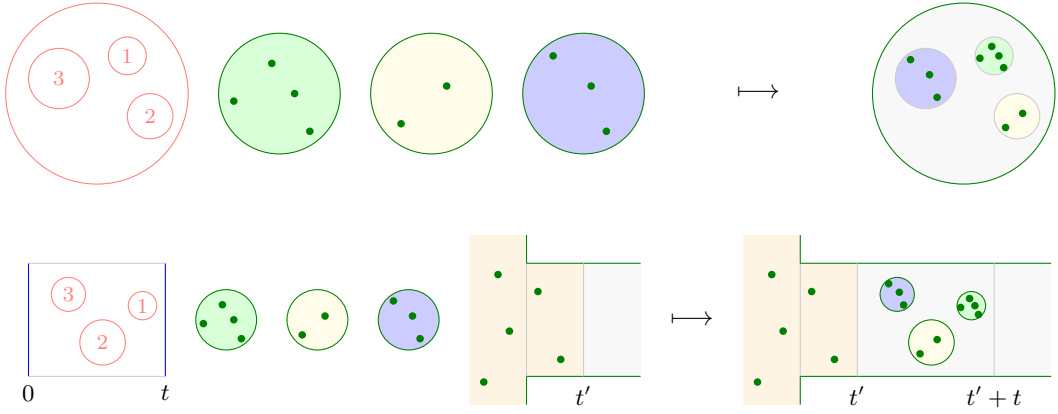

\begin{figure}[t]
\centering
\begin{tikzpicture}
[x=1mm,y=1mm]

\begin{scope}[xshift=6mm]
\draw[black!20] (-12,0) -- (12,0);
\node[red,fill,circle,inner sep=0.7pt] at (-12,0) {};
\node[red,fill,circle,inner sep=0.7pt] at (12,0) {};
\node[red,fill,circle,inner sep=0.7pt] at (-10,0) {};
\node[red,fill,circle,inner sep=0.7pt] at (-2,0) {};
\draw[red!50,decorate,decoration={brace,amplitude=2pt}] (-10,1) -- (-2,1);
\node[red!50,anchor=south,font=\footnotesize] at (-6,1) {$3$};
\node[red,fill,circle,inner sep=0.7pt] at (1,0) {};
\node[red,fill,circle,inner sep=0.7pt] at (6,0) {};
\draw[red!50,decorate,decoration={brace,amplitude=2pt}] (1,1) -- (6,1);
\node[red!50,anchor=south,font=\footnotesize] at (3.5,1) {$1$};
\node[red,fill,circle,inner sep=0.7pt] at (7,0) {};
\node[red,fill,circle,inner sep=0.7pt] at (11,0) {};
\draw[red!50,decorate,decoration={brace,amplitude=2pt}] (7,1) -- (11,1);
\node[red!50,anchor=south,font=\footnotesize] at (9,1) {$2$};
\end{scope}

\begin{scope}[xshift=30mm]
\fill[green!15] (0,0) circle (8);
\draw[green!50!black] (0,0) circle (8);
\node[green!50!black,fill,circle,inner sep=1pt] at (2,0) {};
\node[green!50!black,fill,circle,inner sep=1pt] at (4,-5) {};
\node[green!50!black,fill,circle,inner sep=1pt] at (-6,-1) {};
\node[green!50!black,fill,circle,inner sep=1pt] at (-1,4) {};
\end{scope}

\begin{scope}[xshift=50mm]
\fill[yellow!10] (0,0) circle (8);
\draw[green!50!black] (0,0) circle (8);
\node[green!50!black,fill,circle,inner sep=1pt] at (2,1) {};
\node[green!50!black,fill,circle,inner sep=1pt] at (-4,-4) {};
\end{scope}

\begin{scope}[xshift=70mm]
\fill[blue!20] (0,0) circle (8);
\draw[green!50!black] (0,0) circle (8);
\node[green!50!black,fill,circle,inner sep=1pt] at (-4,5) {};
\node[green!50!black,fill,circle,inner sep=1pt] at (1,1) {};
\node[green!50!black,fill,circle,inner sep=1pt] at (3,-5) {};
\end{scope}

\node at (93,0) {$\longmapsto$};

\begin{scope}[xshift=120mm]
\fill[black!3] (0,0) circle (12);
\fill[blue!5] (-10,0) arc (180:0:4) -- (-2,12) -- (-10,12) -- cycle;
\fill[green!5] (1,0) arc (180:0:2.5) -- (6,12) -- (1,12) -- cycle;
\fill[yellow!5] (7,0) arc (180:0:2) -- (11,12) -- (7,12) -- cycle;
\fill[pattern=crosshatch dots,pattern color=blue!50] (-10,0) arc (180:0:4) -- (-2,12) -- (-10,12) -- cycle;
\fill[pattern=crosshatch dots,pattern color=green!60] (1,0) arc (180:0:2.5) -- (6,12) -- (1,12) -- cycle;
\fill[pattern=crosshatch dots,pattern color=yellow!50] (7,0) arc (180:0:2) -- (11,12) -- (7,12) -- cycle;
\fill[white] (-12,0) arc (180:0:12) -- (12,12) -- (-12,12) -- cycle;
\draw[green!50!black] (0,0) circle (12);
  \begin{scope}[xshift=3.5mm,scale=0.3125]
  \fill[green!15] (0,0) circle (8);
  \draw[black!20] (0,0) circle (8);
  \node[green!50!black,fill,circle,inner sep=1pt] at (2,0) {};
  \node[green!50!black,fill,circle,inner sep=1pt] at (4,-5) {};
  \node[green!50!black,fill,circle,inner sep=1pt] at (-6,-1) {};
  \node[green!50!black,fill,circle,inner sep=1pt] at (-1,4) {};
  \end{scope}
  \begin{scope}[xshift=9mm,scale=0.25]
  \fill[yellow!10] (0,0) circle (8);
  \draw[black!20] (0,0) circle (8);
  \node[green!50!black,fill,circle,inner sep=1pt] at (2,1) {};
  \node[green!50!black,fill,circle,inner sep=1pt] at (-4,-4) {};
  \end{scope}
  \begin{scope}[xshift=-6mm,scale=0.5]
  \fill[blue!20] (0,0) circle (8);
  \draw[black!20] (0,0) circle (8);
  \node[green!50!black,fill,circle,inner sep=1pt] at (-4,5) {};
  \node[green!50!black,fill,circle,inner sep=1pt] at (1,1) {};
  \node[green!50!black,fill,circle,inner sep=1pt] at (3,-5) {};
  \end{scope}
\end{scope}

\begin{scope}[yshift=-30mm,xshift=6mm,scale=0.75]
\draw[black!20] (-12,0) -- (12,0);
\node[blue,fill,circle,inner sep=0.7pt] at (-12,0) {};
\node[blue,fill,circle,inner sep=0.7pt] at (12,0) {};
\node[red,fill,circle,inner sep=0.7pt] at (-11,0) {};
\node[red,fill,circle,inner sep=0.7pt] at (-6,0) {};
\draw[red!50,decorate,decoration={brace,amplitude=2pt}] (-11,1) -- (-6,1);
\node[red!50,anchor=south,font=\footnotesize] at (-8.5,1) {$3$};
\node[red,fill,circle,inner sep=0.7pt] at (-4.5,0) {};
\node[red,fill,circle,inner sep=0.7pt] at (3.5,0) {};
\draw[red!50,decorate,decoration={brace,amplitude=2pt}] (-4.5,1) -- (3.5,1);
\node[red!50,anchor=south,font=\footnotesize] at (-0.5,1) {$2$};
\node[red,fill,circle,inner sep=0.7pt] at (5,0) {};
\node[red,fill,circle,inner sep=0.7pt] at (11,0) {};
\draw[red!50,decorate,decoration={brace,amplitude=2pt}] (5,1) -- (11,1);
\node[red!50,anchor=south,font=\footnotesize] at (8,1) {$1$};
\node at (-12,0) [anchor=north,font=\small] {$0$};
\node at (12,0) [anchor=north,font=\small] {$t$};
\end{scope}

\begin{scope}[yshift=-30mm,xshift=23mm,scale=0.5]
\fill[green!15] (0,0) circle (8);
\draw[green!50!black] (0,0) circle (8);
\node[green!50!black,fill,circle,inner sep=1pt] at (2,0) {};
\node[green!50!black,fill,circle,inner sep=1pt] at (4,-5) {};
\node[green!50!black,fill,circle,inner sep=1pt] at (-6,-1) {};
\node[green!50!black,fill,circle,inner sep=1pt] at (-1,4) {};
\end{scope}

\begin{scope}[yshift=-30mm,xshift=35mm,scale=0.5]
\fill[yellow!10] (0,0) circle (8);
\draw[green!50!black] (0,0) circle (8);
\node[green!50!black,fill,circle,inner sep=1pt] at (2,1) {};
\node[green!50!black,fill,circle,inner sep=1pt] at (-4,-4) {};
\end{scope}

\begin{scope}[yshift=-30mm,xshift=47mm,scale=0.5]
\fill[blue!20] (0,0) circle (8);
\draw[green!50!black] (0,0) circle (8);
\node[green!50!black,fill,circle,inner sep=1pt] at (-4,5) {};
\node[green!50!black,fill,circle,inner sep=1pt] at (1,1) {};
\node[green!50!black,fill,circle,inner sep=1pt] at (3,-5) {};
\end{scope}

\begin{scope}[yshift=-30mm,xshift=55mm,scale=0.75]
\fill[YellowOrange!10] (0,-15) rectangle (10,15);
\fill[YellowOrange!10] (10,-10) rectangle (20,10);
\fill[black!3] (20,-10) rectangle (30,10);
\draw[green!50!black] (10,-15) -- (10,-10);
\draw[black!20] (10,-10) -- (10,10);
\draw[green!50!black] (10,10) -- (10,15);
\draw[green!50!black] (10,10) -- (30,10);
\draw[green!50!black] (10,-10) -- (30,-10);
\draw[black!20] (20,-10) -- (20,10);
\node[green!50!black,fill,circle,inner sep=1pt] at (16,-7) {};
\node[green!50!black,fill,circle,inner sep=1pt] at (12,5) {};
\node[green!50!black,fill,circle,inner sep=1pt] at (2.5,-11) {};
\node[green!50!black,fill,circle,inner sep=1pt] at (7,-2) {};
\node[green!50!black,fill,circle,inner sep=1pt] at (5,8) {};
\node at (20,-10) [anchor=north,font=\small] {$t'$};
\end{scope}

\node at (84.25,-30) {$\longmapsto$};

\begin{scope}[yshift=-30mm,xshift=91mm,scale=0.75]
\fill[YellowOrange!10] (0,-15) rectangle (10,15);
\fill[YellowOrange!10] (10,-10) rectangle (20,10);
\fill[black!3] (20,-10) rectangle (54,10);
\fill[blue!5] (21,0) arc (180:0:2.5) -- (26,10) -- (21,10) -- cycle;
\fill[yellow!5] (27.5,0) arc (180:0:4) -- (35.5,10) -- (27.5,10) -- cycle;
\fill[green!5] (37,0) arc (180:0:3) -- (43,10) -- (37,10) -- cycle;
\fill[pattern=crosshatch dots,pattern color=blue!50] (21,0) arc (180:0:2.5) -- (26,10) -- (21,10) -- cycle;
\fill[pattern=crosshatch dots,pattern color=yellow!50] (27.5,0) arc (180:0:4) -- (35.5,10) -- (27.5,10) -- cycle;
\fill[pattern=crosshatch dots,pattern color=green!60] (37,0) arc (180:0:3) -- (43,10) -- (37,10) -- cycle;
\draw[green!50!black] (10,-15) -- (10,-10);
\draw[black!20] (10,-10) -- (10,10);
\draw[green!50!black] (10,10) -- (10,15);
\draw[green!50!black] (10,10) -- (54,10);
\draw[green!50!black] (10,-10) -- (54,-10);
\draw[black!20] (20,-10) -- (20,10);
\draw[black!20] (44,-10) -- (44,10);
\node[green!50!black,fill,circle,inner sep=1pt] at (16,-7) {};
\node[green!50!black,fill,circle,inner sep=1pt] at (12,5) {};
\node[green!50!black,fill,circle,inner sep=1pt] at (2.5,-11) {};
\node[green!50!black,fill,circle,inner sep=1pt] at (7,-2) {};
\node[green!50!black,fill,circle,inner sep=1pt] at (5,8) {};
\node at (20,-10) [anchor=north,font=\small] {$t'$};
\node at (44,-10) [anchor=north,font=\small] {$t'+t$};
  \begin{scope}[xshift=40mm,scale=0.375]
  \fill[green!15] (0,0) circle (8);
  \draw[green!50!black] (0,0) circle (8);
  \node[green!50!black,fill,circle,inner sep=1pt] at (2,0) {};
  \node[green!50!black,fill,circle,inner sep=1pt] at (4,-5) {};
  \node[green!50!black,fill,circle,inner sep=1pt] at (-6,-1) {};
  \node[green!50!black,fill,circle,inner sep=1pt] at (-1,4) {};
  \end{scope}
  \begin{scope}[xshift=31.5mm,scale=0.5]
  \fill[yellow!10] (0,0) circle (8);
  \draw[green!50!black] (0,0) circle (8);
  \node[green!50!black,fill,circle,inner sep=1pt] at (2,1) {};
  \node[green!50!black,fill,circle,inner sep=1pt] at (-4,-4) {};
  \end{scope}
  \begin{scope}[xshift=23.5mm,scale=0.3125]
  \fill[blue!20] (0,0) circle (8);
  \draw[green!50!black] (0,0) circle (8);
  \node[green!50!black,fill,circle,inner sep=1pt] at (-4,5) {};
  \node[green!50!black,fill,circle,inner sep=1pt] at (1,1) {};
  \node[green!50!black,fill,circle,inner sep=1pt] at (3,-5) {};
  \end{scope}
\end{scope}

\end{tikzpicture}
\caption{The maps \eqref{eq:lscd-1-action} defining the $\lswc_{d-1}$ action on $(\cgammad{}{c,D}{M}{\xi},\cmap{}{c,\frac12\partial}{D^d}{X})$, also in dimension $d=2$ and for $k=3$. The light green, yellow, blue and orange colours represent sections defined on the complement of each configuration. Light grey indicates regions where the section is constant at the basepoint of $X$ (note that $\xi$ is trivial with fibre $X$ over the grey regions, so this makes sense). Dotted regions indicate that the map to $X$ is extended into this region by defining it to be independent of the vertical direction in this region. (\Cf Figure \ref{fig:stabilisation-maps} for a $3$-dimensional picture.)}
\label{fig:lscd-1-action}
\end{figure}

\section{Monodromy actions}\label{s:monodromy}

\begin{defn}
\label{d-monodromy-action}
Let $f \colon E \to B$ be a Serre fibration and $F = f^{-1}(b)$ for a point $b \in B$. Assume either that $F$ is a CW-complex or that $f$ is a Hurewicz fibration. Then the \emph{monodromy action} of $f$ is the action-up-to-homotopy
\begin{equation}
\label{eq-monodromy-action}
\mathrm{mon}_f \colon \pi_1(B,b) \longrightarrow \pi_0(\mathrm{hAut}(F))
\end{equation}
of $\pi_1(B,b)$ on the fibre $F$ defined as follows. Given an element $[\gamma] \in \pi_1(B,b)$ and a representative loop $\gamma \colon [0,1] \to B$, let $g \colon F \times [0,1] \to E$ be a choice of lift in the diagram:
\begin{equation}
\label{eq-monodromy-lifting}
\centering
\begin{split}
\begin{tikzpicture}
[x=1mm,y=1mm]
\node (tl) at (0,15) {$F$};
\node (tr) at (40,15) {$E$};
\node (bl) at (0,0) {$F \times [0,1]$};
\node (bm) at (25,0) {$[0,1]$};
\node (br) at (40,0) {$B$};
\draw[->] (tl) to node[above,font=\small]{$\mathrm{incl}$} (tr);
\draw[->>] (bl) to (bm);
\draw[->] (bm) to node[below,font=\small]{$\gamma$} (br);
\draw[->] (tl) to node[left,font=\footnotesize]{$(-,0)$} (bl);
\draw[->] (tr) to node[right,font=\small]{$f$} (br);
\draw[->,densely dashed] (bl) to (tr);
\end{tikzpicture}
\end{split}
\end{equation}
and define
\[
\mathrm{mon}_f([\gamma]) = [g(-,1)].
\]
\end{defn}

\begin{rmk}
In fact, all one needs for Definition \ref{d-monodromy-action} is a continuous map $f \colon E \to B$ and a point $b \in B$ such that $f$ satisfies the homotopy lifting property with respect to $F$ and $F \times [0,1]$ (\cf the proof of Lemma \ref{l-monodromy-action} below). 
\end{rmk}

\begin{lem}
\label{l-monodromy-action}
The construction of Definition \ref{d-monodromy-action} using the lifting diagram \eqref{eq-monodromy-lifting} gives a well-defined group homomorphism \eqref{eq-monodromy-action}.
\end{lem}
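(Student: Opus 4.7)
The proof has four steps: existence of the lift $g$, the homotopy-equivalence property of $g(-,1)$, well-definedness on the homotopy class $[\gamma]$, and the homomorphism property. The first step is immediate from the homotopy lifting property: under either hypothesis (Hurewicz fibration, or Serre fibration with CW fibre), $f$ satisfies the HLP with respect to $F$ and its products with cubes. Since $f(g(x,1)) = \gamma(1) = b$, the map $g(-,1)$ indeed factors through the inclusion of $F$ into $E$.

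For the homotopy-equivalence property, apply the same construction to the reversed loop $\bar\gamma(t) = \gamma(1-t)$ to obtain a lift $\bar g$. Each of the two compositions $\bar g(-,1) \circ g(-,1)$ and $g(-,1) \circ \bar g(-,1)$ covers one of the null-homotopic loops $\gamma \cdot \bar\gamma$, $\bar\gamma \cdot \gamma$, and lifting a null-homotopy rel endpoints via the HLP produces the required homotopies to $\mathrm{id}_F$ inside $F$.

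For well-definedness, separate the question into independence of the chosen lift and independence of the representative loop. Given two lifts $g_0, g_1$ of the same $\gamma$, define a path $K \colon F \times [0,1] \to E$ by reversing $g_0$ on $[0,\tfrac12]$ and running $g_1$ forward on $[\tfrac12,1]$; this covers the null-homotopic loop $\bar\gamma \cdot \gamma$ in $B$ and runs from $g_0(-,1)$ to $g_1(-,1)$ in $E$. A null-homotopy of this loop rel endpoints lifts to $\tilde H \colon F \times [0,1]^2 \to E$ extending $K$ on one edge of the square and the two constant paths at $g_0(-,1)$, $g_1(-,1)$ on the other two edges; the restriction $\tilde H(-,-,1)$ is then a homotopy inside $F$ between $g_0(-,1)$ and $g_1(-,1)$. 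This lifting problem is solvable because the inclusion of those three edges into the square is homeomorphic, as a pair of spaces, to the standard face inclusion $F \times [0,1] \times \{0\} \hookrightarrow F \times [0,1]^2$. Independence of the representative $\gamma$ within its homotopy class is then similar: any rel-endpoints homotopy $\Gamma \colon [0,1]^2 \to B$ lifts to $F \times [0,1]^2 \to E$ using the face inclusion $F \times \{0\} \times [0,1] \hookrightarrow F \times [0,1]^2$, and the desired homotopy between lifted endpoints is read off along $F \times \{1\} \times [0,1]$, using that $\Gamma(1,s) = b$.

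Finally, for the homomorphism property, pick loops $\gamma, \delta$, a lift $g_\gamma$ of $\gamma$ starting at the inclusion, and a lift $g_\delta'$ of $\delta$ starting at $g_\gamma(-,1)$ in place of the inclusion in the top row of \eqref{eq-monodromy-lifting}. The reparameterised concatenation of $g_\gamma$ and $g_\delta'$ is then a lift of $\gamma \cdot \delta$ starting at the inclusion, whose endpoint equals $g_\delta'(-,1)$ and represents the appropriate composition of monodromies. The main technical obstacle is the U-shape lifting argument for independence of the chosen lift -- reducing the cofibration inclusion of three sides of a square into the square to a standard face inclusion -- with the remaining steps being routine applications of the HLP.
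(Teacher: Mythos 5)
Your proposal is correct and follows essentially the same route as the paper: the heart of both arguments is lifting over the inclusion of three sides of a square into the square (your ``U-shape'' reduction to a standard face inclusion is exactly the paper's diagram for well-definedness), and the homomorphism property is obtained in both by concatenating a lift of the second loop started at the endpoint of the first. The only cosmetic differences are that the paper handles lift-independence and representative-independence in a single U-shape lifting rather than two, and deduces that the monodromies are homotopy equivalences from the monoid-homomorphism property rather than via your direct reversed-loop argument.
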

\begin{proof}
Suppose that $\gamma'$ is another representative of $[\gamma]$ and that $g'$ is a lift of $\gamma'$ in the diagram \eqref{eq-monodromy-lifting} (with $\gamma$ replaced by $\gamma'$). Let $k \colon F \times [0,1]^2 \to E$ be a choice of lift in the diagram:
\begin{equation}
\label{eq-monodromy-lifting-well-defined}
\centering
\begin{split}
\begin{tikzpicture}
[x=1mm,y=1mm]
\node (tl) at (0,15) {$F \times \bigl(\phantom{--}\bigr)$};
\begin{scope}[xshift=1.6mm,yshift=13.2mm]
\fill[black!10] (0,0) rectangle (4,4);
\draw[very thick] (4,0) -- (0,0) -- (0,4) -- (4,4);
\end{scope}
\node (tr) at (50,15) {$E$};
\node (bl) at (0,0) {$F \times [0,1]^2$};
\node (bm) at (25,0) {$[0,1]^2$};
\node (br) at (50,0) {$B$};
\draw[->] (tl) to node[above,font=\small]{$g_1 \cup \mathrm{const}_\mathrm{incl} \cup g_2$} (tr);
\draw[->>] (bl) to (bm);
\draw[->] (bm) to node[below,font=\small]{$h$} (br);
\draw[->] (tl) to node[left,font=\small]{$\mathrm{incl}$} (bl);
\draw[->] (tr) to node[right,font=\small]{$f$} (br);
\draw[->,densely dashed] (bl) to (tr);
\end{tikzpicture}
\end{split}
\end{equation}
where $h$ is a homotopy $\gamma \simeq \gamma'$ relative to endpoints. Then $k|_{F \times \{1\} \times [0,1]}$ is a homotopy $g(-,1) \simeq g'(-,1)$ of self-maps of $F$. This implies that the construction of Definition \ref{d-monodromy-action} gives a well-defined function $\mathrm{mon}_f \colon \pi_1(B,b) \to \pi_0(\mathrm{Map}(F,F))$. It remains to prove that $\mathrm{mon}_f$ is a homomorphism of monoids, since it will then follow that it has image contained in the underlying group $\pi_0(\mathrm{hAut}(F))$ of $\pi_0(\mathrm{Map}(F,F))$. It is clear that $\mathrm{mon}_f$ takes the constant loop to the identity map of $F$, since in this case we may take the lift in \eqref{eq-monodromy-lifting} to be the projection $F \times [0,1] \twoheadrightarrow F$ followed by the inclusion $F \hookrightarrow E$. We therefore just have to prove that
\[
\mathrm{mon}_f([\gamma_2 . \gamma_1]) = \mathrm{mon}_f([\gamma_2]) \circ \mathrm{mon}_f([\gamma_1])
\]
for elements $[\gamma_1],[\gamma_2] \in \pi_1(B,b)$. Choose lifts $g_1$ and $g_2$ in the diagrams:
\begin{equation}
\label{eq-monodromy-lifting-composition1}
\centering
\begin{split}
\begin{tikzpicture}
[x=1mm,y=1mm]
\node (tl) at (0,15) {$F$};
\node (tr) at (40,15) {$E$};
\node (bl) at (0,0) {$F \times [0,1]$};
\node (bm) at (25,0) {$[0,1]$};
\node (br) at (40,0) {$B$};
\draw[->] (tl) to node[above,font=\small]{$\mathrm{incl}$} (tr);
\draw[->>] (bl) to (bm);
\draw[->] (bm) to node[below,font=\small]{$\gamma_1$} (br);
\draw[->] (tl) to node[left,font=\footnotesize]{$(-,0)$} (bl);
\draw[->] (tr) to node[right,font=\small]{$f$} (br);
\draw[->,densely dashed] (bl) to node[above left=-1mm,font=\small]{$g_1$} (tr);
\begin{scope}[xshift=70mm]
\node (tl) at (0,15) {$F$};
\node (tr) at (40,15) {$E$};
\node (bl) at (0,0) {$F \times [1,2]$};
\node (bm) at (25,0) {$[1,2]$};
\node (br) at (40,0) {$B$};
\draw[->] (tl) to node[above,font=\small]{$\mathrm{incl}$} (tr);
\draw[->>] (bl) to (bm);
\draw[->] (bm) to node[below,font=\small]{$\gamma_2$} (br);
\draw[->] (tl) to node[left,font=\footnotesize]{$(-,1)$} (bl);
\draw[->] (tr) to node[right,font=\small]{$f$} (br);
\draw[->,densely dashed] (bl) to node[above left=-1mm,font=\small]{$g_2$} (tr);
\end{scope}
\end{tikzpicture}
\end{split}
\end{equation}
so we have $\mathrm{mon}_f([\gamma_2]) \circ \mathrm{mon}_f([\gamma_1]) = [g_2(-,2) \circ g_1(-,1)]$. We may now define a lift $F \times [0,2] \to E$ of the diagram:
\begin{equation}
\label{eq-monodromy-lifting-composition2}
\centering
\begin{split}
\begin{tikzpicture}
[x=1mm,y=1mm]
\node (tl) at (0,15) {$F$};
\node (tr) at (40,15) {$E$};
\node (bl) at (0,0) {$F \times [0,2]$};
\node (bm) at (22,0) {$[0,2]$};
\node (br) at (40,0) {$B$};
\draw[->] (tl) to node[above,font=\small]{$\mathrm{incl}$} (tr);
\draw[->>] (bl) to (bm);
\draw[->] (bm) to node[below,font=\small]{$\gamma_2 . \gamma_1$} (br);
\draw[->] (tl) to node[left,font=\footnotesize]{$(-,0)$} (bl);
\draw[->] (tr) to node[right,font=\small]{$f$} (br);
\draw[->,densely dashed] (bl) to (tr);
\end{tikzpicture}
\end{split}
\end{equation}
by:
\[
(x,t) \longmapsto \begin{cases}
g_1(x,t) & t \in [0,1] \\
g_2(g_1(x,1),t) & t \in [1,2].
\end{cases}
\]
By definition, it follows that $\mathrm{mon}_f([\gamma_2 . \gamma_1]) = [g_2(g_1(-,1),2)] = [g_2(-,2) \circ g_1(-,1)]$.
\end{proof}

\begin{notation}
\label{notation-inputs}
From now on, we fix, once and for all, choices of the objects of Definitions \ref{d:Mhat} and \ref{d:cgammadot}, namely:
\begin{itemizeb}
\item a manifold $M$ equipped with an embedded codimension-zero disc $D \subseteq \partial M$ with centre $*$,
\item a collar neighbourhood of $\partial M$, namely an embedding $b \colon (-\infty,0] \times \partial M \hookrightarrow M$ so that $b(0,-)$ is the inclusion $\partial M \subset M$,
\item[$\triangleright$] This determines the manifold $\hat{M}$ and its submanifolds $\hat{M}_r$ ($r \in [0,\infty)$) as in Definition \ref{d:Mhat}.
\item a fibre bundle $\xi \colon E \to M$, with basepoint $x_0 \in X \coloneqq \xi^{-1}(*)$,
\item a subset $c \subseteq \pi_0(\Sigma(\xi))$ (\cf Definition \ref{s:singularity}),
\item a trivialisation $\theta \colon \xi|_D \cong D \times X$. 
\item[$\triangleright$] We write $s_D$ for the section of $\xi|_D$ corresponding to the constant section of $D \times X$ at $x_0$.
\item[$\triangleright$] Using the trivialisation $\theta$, we extend $\xi$ by a trivial $X$-bundle to obtain a bundle $\hat{\xi}$ over $\hat{M}$.
\end{itemizeb}
\end{notation}

Recall the homotopy-equivalent models $\dot{C}_k(M) \simeq C_k(\mathring{M})$ and $\cgammad{k}{c,D}{M}{\xi} \simeq \cgamma{k}{c,D}{M}{\xi}$ for configuration spaces and configuration-section spaces from \S\ref{s:em-modules-over-en-algebras} (Definitions \ref{d:cdot} and \ref{d:cgammadot}).

\begin{lem}
\label{l:forgetful-Hurewicz}
There is a Hurewicz fibration
\begin{equation}
\label{eq:cdot-fibration}
\cgammad{k}{c,D}{M}{\xi} \too \dot{C}_k(M)
\end{equation}
given by forgetting the section data of a configuration-section.
\end{lem}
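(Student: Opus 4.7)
The plan is to show that $p$ is locally trivial; since the base $\dot{C}_k(M)$ is open in the manifold $C_k(\hat{M}) \times (0,\infty)$ (hence paracompact), local triviality will imply that $p$ is a Hurewicz fibration. Fix $(z_0, t_0) \in \dot{C}_k(M)$ and choose $\epsilon > 0$ so small that $z_0 \subseteq \mathrm{int}(\hat{M}_{t_0 - 3\epsilon})$, and set $U_2 = (t_0 - \epsilon, t_0 + \epsilon)$. The argument of Lemma \ref{l:local-sections2}, applied with a codimension-zero ball $B \subseteq \mathrm{int}(\hat{M}_{t_0 - 3\epsilon})$ containing $z_0$, will supply an open neighbourhood $U_1 \subseteq C_k(\mathrm{int}(\hat{M}_{t_0 - 3\epsilon}))$ of $z_0$ together with a continuous map $\sigma \colon U_1 \to \mathrm{Aut}_c(\hat{\xi})$ such that $\sigma(z_0) = \mathrm{id}$, $\sigma(z) \cdot z_0 = z$, and every $\sigma(z)$ is supported inside $\hat{M}_{t_0 - 3\epsilon}$. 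Independently, I would pick a continuous family of diffeomorphisms $\mu_t$ of $[t_0 - 2\epsilon, \infty)$ fixing the left endpoint with $\mu_{t_0} = \mathrm{id}$ and $\mu_t(t_0) = t$ for $t \in U_2$; taking product with $\mathrm{id}_D$ and extending by the identity outside $[t_0 - 2\epsilon, \infty) \times D$ gives a self-diffeomorphism of $\hat{M}$, which lifts via the trivialisation $\theta$ to an automorphism of $\hat{\xi}$ acting as the identity on fibres. This yields a continuous family $\tau \colon U_2 \to \mathrm{Aut}_c(\hat{\xi})$ with $\tau(t_0) = \mathrm{id}$ whose base diffeomorphism carries $[t_0, \infty) \times D$ onto $[t, \infty) \times D$.

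Since the supports of $\sigma(z)$ and $\tau(t)$ are disjoint, the two automorphisms commute, and I would define a local trivialisation $U_1 \times U_2 \times F \to p^{-1}(U_1 \times U_2)$, where $F$ is the fibre of $p$ over $(z_0, t_0)$, by
\[
((z,t), s) \longmapsto \bigl( z,\, (\sigma(z)\tau(t)) \cdot s,\, t \bigr),
\]
where $\mathrm{Aut}_c(\hat{\xi})$ acts on sections by $(\varphi, G) \cdot s = G \circ s \circ \varphi^{-1}$. The conditions defining $\cgammad{k}{c,D}{M}{\xi}$ are preserved: the base diffeomorphism of $\tau(t)$ maps $[t_0, \infty) \times D$ onto $[t, \infty) \times D$ and its lift is the identity on fibres there, so constancy at the basepoint $x_0$ is transported correctly; and the charge condition is invariant under $\mathrm{Aut}_c(\hat{\xi})$, since it is given by path-components of $\Sigma(\hat{\xi})$. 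Composition with $p$ then recovers the projection onto $U_1 \times U_2$, and the inverse is $(z, s, t) \mapsto ((z,t), (\sigma(z)\tau(t))^{-1} \cdot s)$.

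The main technical point I expect to need to verify is the continuity of $\tau$, and hence of the trivialisation, with respect to the compact-open topology on section spaces. Because $\tau(t)$ is supported entirely on the cylinder end, which is disjoint from every configuration $z \in U_1$, and acts via a fibre-preserving lift of a rescaling of $[t_0 - 2\epsilon, \infty)$, this reduces to continuity of the elementary family $\mu_t$. An alternative would be to assemble $\sigma$ and $\tau$ into a single local section for an appropriate group action on $\dot{C}_k(M)$ and apply Proposition \ref{p:fibre-bundle} directly, but identifying such a group is more cumbersome than the direct local-triviality construction above.
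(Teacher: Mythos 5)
Your argument is correct, but it takes a genuinely different route from the paper's. The paper's proof constructs the global homeomorphisms \eqref{eq:identifications}, namely $\dot{C}_k(M) \cong C_k(\mathring{M}) \times (0,\infty)$ and $\cgammad{k}{c,D}{M}{\xi} \cong \cgamma{k}{c,D}{M}{\xi} \times (0,\infty)$, by translating the whole cylinder end $D \times \bR$ by $-t$ (a global version of your family $\tau$), under which the forgetful map becomes $p \times \mathrm{id}_{(0,\infty)}$ for the Hurewicz fibration $p$ of \eqref{eq:fibre-bundle8}; the conclusion is then immediate. You instead establish local triviality of \eqref{eq:cdot-fibration} from scratch, combining local sections in the configuration coordinate (in the style of Lemma \ref{l:local-sections2}) with a local stretching family in the $t$-coordinate, and then invoke paracompactness of the base. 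This is valid: it essentially inlines the machinery (Proposition \ref{p:fibre-bundle}, Lemma \ref{l:local-sections2}, restriction to unions of path-components) that the paper reuses wholesale through \eqref{eq:fibre-bundle8}, at the cost of more bookkeeping but with the small bonus of exhibiting \eqref{eq:cdot-fibration} directly as a fibre bundle. One step you should phrase more carefully: the charge condition is \emph{not} invariant under arbitrary elements of $\mathrm{Aut}(\hat{\xi})$ --- an automorphism induces a permutation of $\pi_0(\Sigma(\hat{\xi}))$ that need not fix $c$ setwise (consider an orientation-reversing base diffeomorphism). What saves you is that $\sigma(z)\tau(t)$ lies in a continuous family through the identity parametrised by the connected set $U_1 \times U_2$, so the locally constant map $\mathrm{loc}_{\hat{\xi}}$ of \eqref{eq:local-section} is constant along the image of your trivialisation; this, rather than invariance under the full automorphism group, is why the trivialisation preserves the subspace cut out by $c$ (and you should likewise normalise $\sigma(z_0)=\mathrm{id}$, as you do, so that the constant value is the correct one).
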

\begin{proof}
The forgetful map \eqref{eq:cdot-fibration} is defined by $(z,s,t) \mapsto (z,t)$, where $t>0$ is a real number, $z$ is a configuration in the interior of $\hat{M}_t$ and $s$ is a section of $\hat{\xi}$ over $\hat{M}_t \smallsetminus z$. There are homeomorphisms
\begin{equation}
\label{eq:identifications}
\dot{C}_k(M) \cong C_k(\mathring{M}) \times (0,\infty) \qquad\text{and}\qquad \cgammad{k}{c,D}{M}{\xi} \cong \cgamma{k}{c,D}{M}{\xi} \times (0,\infty)
\end{equation}
under which \eqref{eq:cdot-fibration} corresponds to the map $p \times \mathrm{id}_{(0,\infty)}$, where $p$ is the Hurewicz fibration \eqref{eq:fibre-bundle8}. Hence \eqref{eq:cdot-fibration} is also a Hurewicz fibration. The homeomorphisms above may be defined as follows. Let $D'$ be an open codimension-zero disc in $\partial M$ containing $D$ in its interior. Choose an identification of $b((-\infty,0] \times D') \subseteq M$ with $(-\infty,0] \times D$ so that $b(\{0\} \times D)$ corresponds to $\{0\} \times D$ (Figure \ref{fig:embedded-DR}). This induces an embedding $D \times \bR \hookrightarrow \hat{M}$, and we obtain a homeomorphism $\psi_r \colon \hat{M} \to \hat{M}$ for each $r \in \bR$ by defining $\psi_r(x,t) = (x,t+r)$ for $(x,t) \in D \times \bR$ and $\psi_r(y)=y$ for $y \in \hat{M} \smallsetminus (D \times \bR)$. The left-hand homeomorphism of \eqref{eq:identifications} may then be defined by
\[
(z,t) \longmapsto (\psi_{-t}(z),t).
\]
Choosing a trivialisation of $\hat{\xi}|_{D \times \bR}$ (extending the identity trivialisation of $\hat{\xi}|_{D \times [0,\infty)}$, which is trivial by construction), we may lift $\psi_r \colon \hat{M} \to \hat{M}$ to a bundle-homeomorphism $\widetilde{\psi}_r \colon \hat{\xi} \to \hat{\xi}$. The right-hand homeomorphism of \eqref{eq:identifications} may then be defined by
\[
(z,s,t) \longmapsto (\psi_{-t}(z),\widetilde{\psi}_t \circ s \circ \psi_{-t},t). \qedhere
\]
\end{proof}

\begin{figure}[t]
\centering
\begin{tikzpicture}
[x=1mm,y=1mm]
\fill[green!20] (20,-20) rectangle (40,20);
\fill[green!30] (30,-10) rectangle (40,10);
\fill[blue!40,opacity=0.5] (30,-10) rectangle (40,10);
\fill[blue!30] (40,-5) rectangle (80,5);
\draw[black!20] (30,-20) -- (30,20);
\draw[black!20] (30,-10) -- (40,-10);
\draw[black!20] (30,10) -- (40,10);
\draw (40,-20) -- (40,20);
\draw (40,5) -- (80,5);
\draw (40,-5) -- (80,-5);
\node at (40,-20) [fill,inner sep=1pt] {};
\node at (30,-20) [fill,inner sep=1pt] {};
\node at (80,-5) [fill,inner sep=1pt] {};
\foreach \x/\y/\z in {38/6/4,36/7/3,34/8/2,32/9/1}
  \draw (40,\y) .. controls (\x,\y) and (\x,\y) .. (\x,\z) -- (\x,-\z) .. controls (\x,-\y) and (\x,-\y) .. (40,-\y);
\node at (25,0) {$M$};
\node at (35,-15) {$\mathrm{im}(b)$};
\draw[decorate,decoration={brace,amplitude=3pt,mirror}] (41,-4.5) -- (41,4.5);
\node at (41.5,0) [anchor=west] {$D$};
\node at (40,-20) [font=\small,anchor=north] {$0$};
\node at (30,-20) [font=\small,anchor=north] {$-\infty$};
\node at (80,-5) [font=\small,anchor=north] {$\infty$};
\end{tikzpicture}
\caption{The turquoise region is $b((-\infty,0] \times D')$ and is identified with $(-\infty,0] \times D$ so that $b(\{0\} \times D)$ corresponds to $\{0\} \times D$. For illustration, four other slices $\{t\} \times D$ under this identification are drawn.}
\label{fig:embedded-DR}
\end{figure}
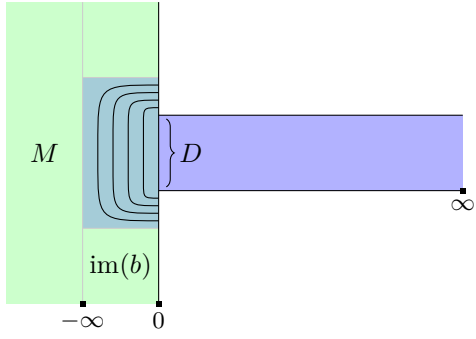

\begin{rmk}
The homeomorphisms \eqref{eq:identifications} constructed in the proof of Lemma \ref{l:forgetful-Hurewicz} give an alternative proof of Lemma \ref{l:equivalent-models}, although the construction of \eqref{eq:identifications} is a little more ad hoc.
\end{rmk}

\begin{defn}
For a real number $r\geq 1$, let $p_r = (*,r-\tfrac12) \in D \times [0,\infty) \subseteq \hat{M}$, where $*$ denotes the centre of the disc $D$. For an integer $k\geq 1$, the $k$-th ``standard configuration'' in $\hat{M}$ is defined to be $z_k = \{p_1,p_2,\ldots,p_k\}$, and the basepoint of $\dot{C}_k(M)$ is defined to be $(z_k,k)$.
\end{defn}

\begin{rmk}
\label{rmk:stabilisation}
Stabilisation maps for configuration spaces and configuration-section spaces will be defined in \S\ref{s:extension}, using the $E_0$-module structure of \S\ref{s:em-modules-over-en-algebras}. However, at the level of configuration spaces, it is already clear that the stabilisation map $\dot{C}_k(M) \to \dot{C}_{k+1}(M)$ should be defined by
\[
(z,t) \longmapsto (z \sqcup \{p_{t+1}\} , t+1).
\]
\end{rmk}

\begin{lem}
\label{l:formula-for-fibre}
The fibre of \eqref{eq:cdot-fibration} over $(z_k,k) \in \dot{C}_k(M)$ is the space
\begin{equation}
\label{eq:formula-for-fibre}
\Gamma_k^{c,D}(M;\xi) \coloneqq \Gamma^{c_D,D\times\{k\}} \bigl( \hat{M}_k \smallsetminus z_k ; \hat{\xi} \bigr)
\end{equation}
of sections $s$ of $\hat{\xi}$ defined over $\hat{M}_k \smallsetminus z_k \subseteq \hat{M}$ such that
\begin{itemizeb}
\item the restriction of $s$ to a small punctured neighbourhood of $p_i$ lies in $c_D \subseteq [S^{d-1},X]$ for each $i \in \{1,\ldots,k\}$, where $c_D$ is as in Definition \ref{d:cgammadot},
\item the restriction of $s$ to $D \times \{k\} \subseteq \partial \hat{M}_k$ is constant at the basepoint $x_0$ of $X$.
\end{itemizeb}
\end{lem}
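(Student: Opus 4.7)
The plan is to unpack the definition of $\cgammad{k}{c,D}{M}{\xi}$ from Definition~\ref{d:cgammadot} and take the preimage of $(z_k,k)$ under the forgetful map. By construction, the fibre consists of triples $(z_k, s, k)$ where $s$ is a section of $\hat{\xi}$ defined over $\hat{M} \smallsetminus z_k$ such that (i) $s$ is the constant section at $x_0$ on $[k,\infty) \times D \subseteq \hat{M}$, and (ii) the local germ of $s$ at each configuration point determines an element of $\hat{c} \subseteq \pi_0(\Sigma(\hat{\xi}))$. Note that the condition $z_k \subseteq \mathrm{int}(\hat{M}_k)$ is automatic, since each $p_i = (*, i - \tfrac12)$ satisfies $i - \tfrac12 < k$.

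Next, I would show that restriction to $\hat{M}_k \smallsetminus z_k$ gives a bijection between the set of such sections $s$ and the space $\Gamma^{c_D, D\times\{k\}}(\hat{M}_k \smallsetminus z_k; \hat{\xi})$ described in the statement. Indeed, condition (i) says that $s|_{[k,\infty) \times D}$ is determined (and in particular $s|_{D \times \{k\}}$ is constant at $x_0$), so restriction is injective; conversely, any section on $\hat{M}_k \smallsetminus z_k$ that restricts to the constant section at $x_0$ on $D \times \{k\}$ extends uniquely by the constant section at $x_0$ over $[k,\infty) \times D$, using that $\hat{\xi}$ is trivial (with fibre $X$) over this region by construction. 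This identification is clearly a homeomorphism for the compact-open topologies.

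Finally, I would verify that the singularity condition $\hat{c}$ on the sections $s$ (near the points of $z_k$, which lie in the interior of $M \subseteq \hat{M}$, hence away from the extended region) matches the condition $c_D$ on the corresponding restrictions. This is exactly the content of the bijection $[S^{d-1}, X] \cong \pi_0(\Sigma(\xi|_D)) \cong \pi_0(\Sigma(\hat{\xi}))$ constructed in Definition~\ref{d:cgammadot}: $c_D$ is defined precisely as the preimage of $c$ under $[S^{d-1},X] \twoheadrightarrow \pi_0(\Sigma(\xi))$, which under the bijection $\pi_0(\Sigma(\xi)) \cong \pi_0(\Sigma(\hat{\xi}))$ corresponds to $\hat{c}$. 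So the conditions agree.

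The only minor obstacle is the bookkeeping in the last step, namely checking carefully that the local germ at $p_i$ lies in $\hat{c}$ if and only if the corresponding local germ (viewed in the covering $\Sigma(\hat{\xi})$, or equivalently in $\Sigma(\xi|_D)$ via the trivialisation $\theta$) lies in $c_D \subseteq [S^{d-1},X]$. This is immediate from the commutativity of the triangle of coverings $\Sigma(\xi|_D) \hookrightarrow \Sigma(\xi) \hookrightarrow \Sigma(\hat{\xi})$ induced by the inclusions of bundles; no further work is required.
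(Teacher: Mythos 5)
Your proposal follows essentially the same route as the paper's (very short) proof: unpack Definition \ref{d:cgammadot}, identify the fibre over $(z_k,k)$ with sections on $\hat{M}_k \smallsetminus z_k$ via restriction/extension across $D \times \{k\}$, and translate the singularity condition $\hat{c} \subseteq \pi_0(\Sigma(\hat{\xi}))$ into $c_D \subseteq [S^{d-1},X]$. The restriction/extension step and the topological identification are fine.

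However, there is one factual error in your last step that touches the heart of the matter: the points of $z_k$ do \emph{not} lie ``in the interior of $M$, hence away from the extended region''. By definition $p_r = (*, r-\tfrac12) \in D \times [0,\infty)$, so the standard configuration lies entirely \emph{inside} the glued-on cylinder. This is not incidental bookkeeping: it is precisely because each $p_i$ sits in the region over which $\hat{\xi}$ is canonically trivialised with fibre $X$ (and which carries a preferred local orientation) that the germ of $s$ near $p_i$ determines a well-defined class in $[S^{d-1},X]$ in the first place, so that the condition ``lies in $c_D$'' is even meaningful; the equivalence with the condition ``component lies in $\hat{c}$'' then follows because $c_D$ is the preimage of $\hat{c}$ under $[S^{d-1},X] \to \pi_0(\Sigma(\hat{\xi}))$. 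For a point genuinely in $\mathring{M}$ the covering $\Sigma(\xi)$ need not be trivial and the assignment of a class in $[S^{d-1},X]$ to a germ would require a choice. (Relatedly, your displayed chain $[S^{d-1},X] \cong \pi_0(\Sigma(\xi|_D)) \cong \pi_0(\Sigma(\hat{\xi}))$ overstates things: the map $\pi_0(\Sigma(\xi|_D)) \to \pi_0(\Sigma(\xi))$ is in general only a surjection, as you correctly acknowledge a sentence later.) With the location of $z_k$ corrected, your argument coincides with the paper's.
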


\begin{coro}
\label{c:Hurewicz-fibration-sequence}
We have a Hurewicz fibration sequence of the form
\begin{equation}
\label{eq:Hurewicz-fibration-sequence}
\Gamma_k^{c,D}(M;\xi) \too \cgammad{k}{c,D}{M}{\xi} \too \dot{C}_k(M).
\end{equation}
\end{coro}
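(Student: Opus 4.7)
The proof is essentially an immediate assembly of the two preceding lemmas, so the ``plan'' is very short. The map
\[
\cgammad{k}{c,D}{M}{\xi} \too \dot{C}_k(M)
\]
defined by $(z,s,t) \mapsto (z,t)$ is a Hurewicz fibration by Lemma~\ref{l:forgetful-Hurewicz}. To produce the fibration sequence \eqref{eq:Hurewicz-fibration-sequence}, I just need to identify its fibre over the chosen basepoint $(z_k,k)$ of $\dot{C}_k(M)$ with the space $\Gamma_k^{c,D}(M;\xi)$ defined in \eqref{eq:formula-for-fibre}. That identification is precisely the content of Lemma~\ref{l:formula-for-fibre}.

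Thus the proof is a single sentence: combine Lemma~\ref{l:forgetful-Hurewicz} (the map is a Hurewicz fibration) with Lemma~\ref{l:formula-for-fibre} (the fibre over $(z_k,k)$ is $\Gamma_k^{c,D}(M;\xi)$). There is no real obstacle: both inputs are already established, and the corollary is nothing more than the assertion that a Hurewicz fibration together with a named fibre constitutes a Hurewicz fibration sequence. If anything warrants a comment, it is only to note that the basepoint $(z_k,k) \in \dot{C}_k(M)$ lies in the relevant path-component (so that $\Gamma_k^{c,D}(M;\xi)$ really is the fibre, and not a disjoint union of fibres over different components), which is immediate from the choice of $z_k$ as a subset of $\mathring{\hat{M}_k}$ and from the boundary condition imposed at $D \times \{k\}$ matching the prescribed section $s_D$ under the construction of $\hat{\xi}$ in Definition~\ref{d:cgammadot}.
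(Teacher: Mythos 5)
Your proof is correct and is exactly the paper's argument: the corollary is deduced immediately by combining Lemma \ref{l:forgetful-Hurewicz} (the forgetful map is a Hurewicz fibration) with Lemma \ref{l:formula-for-fibre} (identifying its fibre over the basepoint $(z_k,k)$ with $\Gamma_k^{c,D}(M;\xi)$). Your extra remark about the basepoint is a harmless addition but not needed.
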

\begin{proof}[Proof of Corollary \ref{c:Hurewicz-fibration-sequence}.]
This follows immediately from Lemmas \ref{l:forgetful-Hurewicz} and \ref{l:formula-for-fibre}.
\end{proof}

\begin{proof}[Proof of Lemma \ref{l:formula-for-fibre}.]
Directly from the definitions, the fibre of \eqref{eq:cdot-fibration} over $(z_k,k)$ may be described as written, except that the first condition says that $s$ must satisfy the singularity condition $\hat{c} \subseteq \pi_0(\Sigma(\hat{\xi}))$, where $\hat{c}$ is determined by $c \subseteq \pi_0(\Sigma(\xi))$ as explained in Definition \ref{d:cgammadot}. But all of the points of $z_k$ lie in $D \times [0,\infty) \subseteq \hat{M}$, over which the bundle $\hat{\xi}$ is trivial with fibre $X$, so the singularity conditions around these points are equivalent to the conditions written in the lemma.
\end{proof}

\begin{defn}
\label{d:braid-groupoid}
Let
\[
\mathrm{Br}(M) = \bigl[\pi_1(\dot{C}_k(M))\bigr]_{k \in \bN}
\]
denote the groupoid whose objects are $\bN$, whose automorphism group of $k \in \bN$ is $\pi_1(\dot{C}_k(M))$ and which has no morphisms between distinct objects.
\end{defn}

\begin{defn}
\label{d:associated-monodromy-functor}
Fix $(M,D,*,b,\xi,x_0,c,\theta)$ as in Notation \ref{notation-inputs}.
The \emph{associated monodromy functor}
\begin{equation}
\label{eq:associated-monodromy-functor}
\mathrm{Mon}^{c,D}(M,\xi) \colon \mathrm{Br}(M) \longrightarrow \hotop
\end{equation}
takes the object $k \in \bN$ to the fibre $\Gamma_k^{c,D}(M;\xi)$ of \eqref{eq:cdot-fibration}. On automorphisms of $k$, it is defined by the monodromy action \eqref{eq-monodromy-action}, with $f = \eqref{eq:cdot-fibration}$.
\end{defn}

\section{Braid categories}\label{s:braid-categories}

\begin{defn}[{\cite[p.\ 219]{Grayson1976}}]
\label{d:quillen}
The \emph{Quillen bracket construction} $\langle \cD , \cC \rangle$ of a category $\cC$ equipped with a left-action of a monoidal category $\cD$ is the category with the same objects as $\cC$, and with morphisms given by $\langle \cD , \cC \rangle(c,c') = \mathrm{colim}_{\cD}(\cC(- \oplus c,c'))$, where $\oplus$ is the action of $\cD$ on $\cC$. In other words, a morphism $c \to c'$ in $\langle \cD , \cC \rangle$ is an equivalence class of morphisms $\varphi \colon d \oplus c \to c'$ in $\cC$ with $d$ in $\cD$, where the equivalence relation is the symmetric, transitive closure of the relation given by $(d_1,\varphi_1) \sim (d_2,\varphi_2)$ if there exists $\theta \colon d_1 \to d_2$ in $\cD$ such that $\varphi_1 = \varphi_2 \circ (\theta \oplus \mathrm{id}_c)$. This comes equipped with a canonical functor
\[
\cC \longrightarrow \langle \cD , \cC \rangle
\]
given by the identity on objects and by $\varphi \mapsto 0 \oplus \varphi$ on morphisms, where $0$ is the unit object of the monoidal structure on $\cD$.
\end{defn}

\begin{lem}
\label{l:boundary-connected-sum-action}
The groupoid $\mathrm{Br}(D^d)$ has a monoidal structure given by taking the boundary connected sum of two discs. If $d \geq 2$ it is braided and if $d \geq 3$ it is symmetric. Now let $M$ be a connected $d$-manifold with non-empty boundary, and let $D \subseteq \partial M$ be an embedded $(d-1)$-dimensional disc. There is a well-defined action of $\mathrm{Br}(D^d)$ on $\mathrm{Br}(M)$ given by boundary connected sum along $D$.
\end{lem}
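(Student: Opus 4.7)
The plan is to deduce the entire statement from the $\lswc_d$-algebra structures constructed in Proposition \ref{p:e0-modules}, applied both to the disc $D^d$ and to $M$ itself.

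First, specialising Proposition \ref{p:e0-modules}(i) to the case $M = D^d$ endows $\dot{C}(D^d)$ with an $\lswc_d$-algebra structure, and restricting to the $\sa$-colour in particular produces an $E_d$-algebra structure. The binary $E_1$-operation is given explicitly by the concatenation map
\[
\mu \colon \dot{C}_k(D^d) \times \dot{C}_l(D^d) \too \dot{C}_{k+l}(D^d), \quad ((z_1,t_1),(z_2,t_2)) \longmapsto (z_1 \cup \tau_{t_1}(z_2),\, t_1+t_2),
\]
where $\tau_{t_1}$ denotes translation by $t_1$ in the collar direction of $D \subseteq \partial D^d$. The explicit formula, combined with the way the time parameters $t$ add in the linear Swiss cheese operad, makes $\mu$ strictly associative and unital. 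Taking $\pi_0$ recovers the monoid $(\bN, +)$; taking $\pi_1$ based at the standard configurations $z_k$ (which, under $\mu$, map to $z_{k+l}$ up to the canonical path built from the monoid structure) gives the group homomorphisms required to define a strict monoidal structure $\oplus$ on $\mathrm{Br}(D^d)$ with $k \oplus l = k+l$.

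For the braided structure when $d \geq 2$, I use the full $E_d$-algebra structure: the second coordinate direction of the little discs provides a continuous family of operations realising a half-rotation that swaps two sub-configurations past one another. Applied to $\mu(z_k, z_l)$ and $\mu(z_l, z_k)$ this produces a braiding isomorphism $\beta_{k,l} \in \pi_1(\dot{C}_{k+l}(D^d))$, and the hexagon axioms follow from the standard fact that the fundamental groupoid of an $E_2$-algebra is braided monoidal. When $d \geq 3$, the additional perpendicular direction allows $\beta_{l,k} \circ \beta_{k,l}$ to be contracted to the identity, upgrading the braided structure to a symmetric one.

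For the action of $\mathrm{Br}(D^d)$ on $\mathrm{Br}(M)$, I apply Proposition \ref{p:e0-modules}(i) to $M$ itself: after passing to the equivalent $\dot{C}(D^d)$ model for $C(\mathring{D}^d)$ (via Lemma \ref{l:equivalent-models}), the binary $(\sa, \sm; \sm)$-operation of $\lswc_d$ gives a map
\[
\alpha \colon \dot{C}_k(D^d) \times \dot{C}_l(M) \too \dot{C}_{k+l}(M), \quad ((z_1,t_1),(z_2,t_2)) \longmapsto (z_2 \cup \tilde{\tau}_{t_2}(z_1),\, t_1+t_2),
\]
where $\tilde{\tau}_{t_2}$ inserts $z_1 \subseteq \hat{D^d}$ into the collar region $D \times [t_2, \infty) \subseteq \hat{M}$. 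This map is strictly associative and unital with respect to $\mu$, so applying $\pi_1$ yields the desired monoidal action of $\mathrm{Br}(D^d)$ on $\mathrm{Br}(M)$ in the sense needed for Definition \ref{d:quillen}. The main subtlety throughout is ensuring \emph{strictness} (rather than coherence only up to homotopy) of the monoidal structure and the action; this is precisely why the linear model $\lswc_d$ is preferable to $\disc_d$ here, since placing cuboids end-to-end along the first coordinate is strictly (not merely up-to-homotopy) associative, and so no rectification step is required.
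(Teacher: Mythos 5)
Your overall strategy coincides with the paper's: both proofs derive everything from Proposition \ref{p:e0-modules}\ref{lsc1} and then pass to fundamental groupoids, using the $E_d$-algebra structure for the (braided/symmetric) monoidal structure and the module structure for the action. However, two details in your write-up do not parse against the definitions. First, in the $\lswc_d$-algebra $(\dot{C}(D^d),C(\mathring{D}^d))$ the $\sa$-colour is $C(\mathring{D}^d)$, not $\dot{C}(D^d)$; and by Definition \ref{d:linear-swiss-cheese} the operad $\lswc_d$ has \emph{no} operations with two $\sm$-inputs ($\lswc_d(\sa^k,\sm^l;\sm)=\varnothing$ unless $l=1$), so your map $\mu\colon \dot{C}_k(D^d)\times\dot{C}_l(D^d)\to\dot{C}_{k+l}(D^d)$ is not an operation of the structure you are citing. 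The monoidal structure must instead come from the honest $\disc_d$-algebra $C(\mathring{D}^d)$ (the $\sa$-colour), transported across the homotopy equivalence $\dot{C}(D^d)\simeq C(\mathring{D}^d)$ of Lemma \ref{l:equivalent-models} -- which is exactly what the paper does. This also undercuts your ``main subtlety'': the strictness you attribute to the linear model is resting on a binary $\sm$-operation that the linear model does not have; fortunately strictness is not needed, since the fundamental groupoid of an $E_1$- (resp.\ $E_2$-, $E_3$-) space is canonically a monoidal (resp.\ braided, symmetric monoidal) groupoid, coherent up to natural isomorphism, and that suffices for Definition \ref{d:quillen}.

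Second, your explicit formulas are not well defined as stated: for $(z_2,t_2)\in\dot{C}_l(D^d)$ the configuration $z_2$ lies in $\mathrm{int}(\hat{M}_{t_2})$, which contains the interior of the original disc $D^d$ and not just the cuboid $D\times[0,t_2]$, so ``translation by $t_1$ in the collar direction'' does not act on it; one needs a chosen (non-canonical, but contractible choice of) embedding $\hat{D^d}_{t_2}\hookrightarrow D\times[t_1,t_1+t_2]$, and likewise for $\tilde{\tau}_{t_2}$ in your action map. Both slips are repaired by reading Proposition \ref{p:e0-modules}\ref{lsc1} the way the paper does, so the proof goes through, but as written the mechanism is misidentified.
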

\begin{proof}
Let us write $\mathrm{Br}(M) = \pi_1(\dot{C}(M),\{z_k\}_{k \in \bN})$, the fundamental groupoid of $\dot{C}(M)$ with respect to the set of basepoints $\{z_k \in \dot{C}_k(M) \mid k \in \bN\}$. This is a skeleton for, hence equivalent to, the full fundamental groupoid $\pi_1(\dot{C}(M))$, with objects \emph{all} points of $\dot{C}(M)$. When $M=D^d$ we have $\pi_1(\dot{C}(D^d)) \simeq \pi_1(C(\mathring{D}^d))$, since $\dot{C}(D^d)$ and $C(\mathring{D}^d)$ are homotopy equivalent. By Proposition \ref{p:e0-modules}(i), $C(\mathring{D}^d)$ is an $E_d$-algebra and $\dot{C}(M)$ is an $E_0$-module over it. Passing to fundamental groupoids and pulling back along the equivalences, this gives rise to the structure on $\mathrm{Br}(D^d)$ and $\mathrm{Br}(M)$ claimed in the lemma.
\end{proof}

\begin{rmknotation}
Note that $\mathrm{Br}(D^1)$ is the free monoidal category on one object, $\mathrm{Br}(D^2)$ is the free \emph{braided} monoidal category on one object and $\mathrm{Br}(D^3)$ is the free \emph{symmetric} monoidal category on one object. We will therefore abbreviate these groupoids by $\cM$, $\cB$ and $\cS$ respectively. The standard inclusions $D^1 \hookrightarrow D^2 \hookrightarrow D^3$ induce monoidal functors $\cM \hookrightarrow \cB \hookrightarrow \cS$ (the second one is also braided monoidal), and for $d \geq 3$ the standard inclusion $D^3 \hookrightarrow D^d$ induces an isomorphism $\cS \cong \mathrm{Br}(D^d)$.
\end{rmknotation}

\begin{defn}[{\cite[\S 5.2]{Krannich2019Homologicalstabilitytopological}}]
\label{d:coeff-system-category}
Let $M$ be a connected $d$-manifold with non-empty boundary, with $d \geq 2$, and let $D \subseteq \partial M$ be an embedded $(d-1)$-dimensional disc. By Lemma \ref{l:boundary-connected-sum-action}, there is a well-defined action of the braided monoidal category $\mathrm{Br}(D^d)$ on $\mathrm{Br}(M)$. The standard inclusion $D^2 \hookrightarrow D^d$ induces a braided monoidal functor $\cB \to \mathrm{Br}(D^d)$, and hence an action of $\cB$ on $\mathrm{Br}(M)$. Using Definition \ref{d:quillen}, we may therefore define the category $\cC(M) = \langle \cB , \mathrm{Br}(M) \rangle$, which is equipped with a canonical functor
\begin{equation}
\label{eq:underlying-groupoid}
\mathrm{Br}(M) \longrightarrow \cC(M).
\end{equation}
\end{defn}

\begin{lem}
The functor \eqref{eq:underlying-groupoid} is the inclusion of the underlying groupoid of $\cC(M)$.
\end{lem}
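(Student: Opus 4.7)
The plan is to verify the two conditions characterising the inclusion of the underlying groupoid (or core) of $\cC(M)$: namely (i) every morphism in the image of $\mathrm{Br}(M) \to \cC(M)$ is an isomorphism, and (ii) the functor restricts to a bijection between $\mathrm{Aut}_{\mathrm{Br}(M)}(k)$ and $\mathrm{Aut}_{\cC(M)}(k)$ for each $k$, with no further isomorphisms in $\cC(M)$ between distinct objects. The key structural fact that I will exploit throughout is that $\mathrm{Br}(M)$ is a groupoid whose objects form the monoid $\bN$ and which has no morphisms between distinct objects, so that the existence of a morphism $b \oplus k \to k'$ in $\mathrm{Br}(M)$ already forces the equality $b + k = k'$ in $\bN$.

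For (i), given $\varphi \in \mathrm{Aut}_{\mathrm{Br}(M)}(k)$, its image is the class $[(0,\varphi)]$ where $0$ is the unit object of $\cB$. Using the composition rule of the Quillen bracket construction (which sends $[(b,\varphi)]$ and $[(b',\psi)]$ to the class of $(b'+b,\psi \circ (\mathrm{id}_{b'} \oplus \varphi))$), one checks that $[(0,\varphi^{-1})]$ provides a two-sided inverse.

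For (ii), suppose $[(b,\varphi \colon b+k \to k')] \colon k \to k'$ is an isomorphism in $\cC(M)$ with inverse represented by $(b',\psi \colon b'+k' \to k)$. Their composite is represented by $(b'+b,\psi \circ (\mathrm{id}_{b'} \oplus \varphi))$, a morphism in $\mathrm{Br}(M)$ from $b'+b+k$ to $k$; hence $b'+b+k = k$, forcing $b = b' = 0$ and in particular $k = k'$. So every isomorphism in $\cC(M)$ is of the form $[(0,\varphi)]$ for some $\varphi \in \mathrm{Aut}_{\mathrm{Br}(M)}(k)$. It remains to show that $\varphi \mapsto [(0,\varphi)]$ is injective. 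If $[(0,\varphi_1)] = [(0,\varphi_2)]$, then by the description of the generating equivalence relation there is some $\theta \in \cB(0,0)$ with $\varphi_1 = \varphi_2 \circ (\theta \oplus \mathrm{id}_k)$. Since $\cB$ is the free braided monoidal groupoid on one object, $\cB(n,n)$ is the braid group $B_n$, and in particular $\cB(0,0) = B_0$ is trivial. Hence $\theta = \mathrm{id}_0$ and $\varphi_1 = \varphi_2$.

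The only genuine subtlety is verifying triviality of $\cB(0,0)$ so as to control the equivalence relation on representatives $(0,\varphi)$; everything else is a direct bookkeeping check using the explicit description of morphisms in the Quillen bracket and the fact that $\mathrm{Br}(M)$ is $\bN$-graded as a groupoid. I do not expect any serious obstacle.
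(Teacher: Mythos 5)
Your proposal is correct and is essentially the paper's argument written out in full: the paper simply cites (and adapts) Randal-Williams--Wahl's Proposition 1.7, whose proof reduces to exactly the two facts you isolate, namely that $\cB$ is $\bN$-graded with no zero-divisors (so a composable pair of mutually inverse morphisms forces $b=b'=0$ and $k=k'$) and that the monoidal unit of $\cB$ has trivial automorphism group (so $\cB(0,0)=B_0$ is trivial and the equivalence relation on representatives $(0,\varphi)$ is discrete). The only cosmetic difference is that you verify these conditions directly rather than by reference.
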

\begin{proof}
This is an immediate adaptation of the proof of \cite[Proposition 1.7]{Randal-WilliamsWahl2017}. They consider the setting of a monoidal groupoid $\cD$ acting on itself, and prove that the canonical functor $\cD \to \langle \cD , \cD \rangle$ is the inclusion of the underlying groupoid of $\langle \cD , \cD \rangle$, as long as $\cD$ has no zero-divisors and its monoidal unit has no non-trivial automorphisms. The same proof goes through more generally for a groupoid $\cC$ equipped with an action of a monoidal groupoid $\cD$ satisfying the same two conditions. In our setting we have $\cC = \mathrm{Br}(M)$ and $\cD = \cB$, which clearly satisfies these conditions.
\end{proof}

\begin{rmk}
Our notation is the reverse of that of \cite{Krannich2019Homologicalstabilitytopological}, since we are using the opposite convention of considering \emph{left}-actions of monoidal categories.
\end{rmk}

\begin{defn}[{\cite[\S 2.3 and \S 3.1]{Palmer2018Twistedhomologicalstability}}]
The categories $\cB(M)$ and $\cB_\sharp(M)$ both have $\bN$ as their set of objects. A morphism $k \to \ell$ in $\cB_\sharp(M)$ is a path $\gamma$, up to endpoint-preserving homotopy, in the space $C_r(\mathring{M})$ for some $r$, satisfying $\gamma(0) \subseteq \{ p_1,\ldots,p_k \}$ and $\gamma(1) \subseteq \{ p_1,\ldots,p_\ell \}$. Composition is defined by analogy with composition of partially-defined functions: given morphisms $\gamma \colon k \to \ell$ and $\delta \colon \ell \to m$, let $r = \lvert \gamma(1) \cap \delta(0) \rvert$ and define $\delta \circ \gamma$ to be the path in $C_r(\mathring{M})$ obtained by concatenating the corresponding restrictions of $\gamma$ and $\delta$. A morphism $k \to \ell$ in the subcategory $\cB(M) \subseteq \cB_\sharp(M)$ is a path $\gamma$ as above, with $r=k$.
\end{defn}

Heuristically, morphisms in $\cB(M)(k,\ell)$ may be thought of as ``injective braids'' with $k$ strands going from $k$ points to a subset of $\ell$ points, whereas morphisms in $\cC(M)(k,\ell)$ may be thought of as braids on $\ell$ strands \emph{modulo} braids on $\ell - k$ strands. Forgetting the last $\ell - k$ strands therefore defines a functor $\cC(M) \to \cB(M)$, as we explain next.

\begin{lem}
Under the conditions of Definition \ref{d:coeff-system-category}, there is a canonical functor
\[
\cC(M) \longrightarrow \cB(M),
\]
which is the identity on objects and is also:
\begin{itemizeb}
\item full, for any $M$,
\item faithful (and hence an isomorphism) if and only if $\mathrm{dim}(M) \geq 3$ and $M$ is simply-connected.
\end{itemizeb}
\end{lem}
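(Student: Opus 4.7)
My plan is to construct the functor $F \colon \cC(M) \to \cB(M)$ explicitly as a lift-and-project procedure on representatives and then verify its properties in turn. A morphism $[\varphi] \in \cC(M)(k, \ell)$ is represented, via Definitions~\ref{d:quillen} and~\ref{d:coeff-system-category}, by an element $\varphi \in \pi_1(C_\ell(\mathring{M}), z_\ell)$ (with $\ell = k + r$), modulo right-multiplication by the image of the homomorphism $B_r \to \pi_1(C_\ell(\mathring{M}))$ coming from the $\cB$-action. Geometrically, this image is represented by loops in which the last $r$ strands braid inside a small disc of the extended collar containing $\{p_{k+1}, \dots, p_\ell\}$, while the first $k$ strands remain constant at $\{p_1, \dots, p_k\}$. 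To define $F([\varphi])$, I lift $\varphi$ to a path $\tilde{\varphi} \colon [0, 1] \to F_\ell(\mathring{M})$ starting at the ordered configuration $(p_1, \dots, p_\ell)$, project onto the first $k$ coordinates to obtain a path in $F_k(\mathring{M})$, and descend to $C_k(\mathring{M})$, getting a path from $z_k$ to some $k$-subset of $z_\ell$. Well-definedness is immediate, since a braid on the last $r$ strands (supported in a disc disjoint from the first $k$ points) leaves the first $k$ strands unchanged. Functoriality is a routine compatibility check between Quillen-bracket composition (which stabilises via $\mathrm{id}_{r_2} \oplus \varphi_1$ by adjoining constant strands at the large-$t$ end of the collar) and concatenation of the extracted paths.

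For fullness, given $\gamma \in \cB(M)(k, \ell)$ (a path in $C_k(\mathring{M})$ from $z_k$ to some $k$-subset $I \subseteq z_\ell$), I extend $\gamma$ to a loop in $C_\ell(\mathring{M})$ based at $z_\ell$ by adjoining $r$ further strands that avoid $\gamma$. At each time $t$, the added strands must form a configuration $\delta(t) \in C_r(\mathring{M} \smallsetminus \gamma(t))$ with $\delta(0) = \{p_{k+1}, \dots, p_\ell\}$ and $\delta(1) = z_\ell \smallsetminus I$. Since $d \geq 2$ and $\mathring{M}$ is connected, removing the finitely many points of $\gamma(t)$ preserves connectedness, so $C_r(\mathring{M} \smallsetminus \gamma(t))$ is path-connected; the parametrised configuration spaces assemble into a fibration over $[0, 1]$ which admits a section with the prescribed endpoints. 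The resulting loop $\varphi(t) = \gamma(t) \sqcup \delta(t)$ defines a class in $\cC(M)(k, \ell)$ that projects to $\gamma$ by construction.

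For the faithfulness statement, when $d \geq 3$ and $M$ is simply connected, the Fadell--Neuwirth fibration $F_n(\mathring{M}) \to F_{n-1}(\mathring{M})$ has as fibre $\mathring{M}$ with $n - 1$ points removed, which remains simply connected in dimension $\geq 3$; induction therefore gives $\pi_1(F_n(\mathring{M})) = 1$, and hence $\pi_1(C_n(\mathring{M})) = S_n$. The $\cB$-action map $B_r \to \pi_1(C_\ell(\mathring{M})) = S_\ell$ factors through the natural surjection $B_r \twoheadrightarrow S_r$ and has image the subgroup $\{1\} \times S_r$ of permutations fixing $\{1, \dots, k\}$ pointwise, so $|\cC(M)(k, \ell)| = \ell!/r!$. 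On the other hand $|\cB(M)(k, \ell)| = \binom{\ell}{k} \cdot k! = \ell!/r!$, since each $k$-subset of $z_\ell$ admits a unique homotopy class of path from $z_k$ for each ordering, again by $\pi_1(F_k(\mathring{M})) = 1$. Fullness then forces $F$ to be a bijection between finite sets of equal cardinality, hence an isomorphism of categories. Conversely, if $\pi_1(\mathring{M}) \neq 1$, the pair $(k, \ell) = (0, 1)$ exhibits non-faithfulness: $\cB(M)(0, 1)$ is a singleton while $\cC(M)(0, 1) \cong \pi_1(\mathring{M})$ is non-trivial. If $d = 2$, the pair $(k, \ell) = (1, 2)$ on $M = D^2$ gives $|\cB(D^2)(1, 2)| = 2$ whereas $\cC(D^2)(1, 2) \cong B_2 \cong \bZ$, with the full twist $\beta^2$ and the stabilisation morphism having the same image under $F$. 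The principal obstacle throughout will be the precise identification of the image of $B_r$ in $\pi_1(C_\ell(\mathring{M}))$ with the kernel of the forgetful map; this uses crucially that in dimension $\geq 3$ the punctured manifold $\mathring{M} \smallsetminus z_k$ remains simply connected, so the image of $B_r$ collapses to the symmetric group and absorbs all possible monodromy.
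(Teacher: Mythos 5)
Your proposal is correct and follows essentially the same route as the paper: the functor forgets the last $\ell-k$ strands, fullness is proved by adjoining strands to a given path (the paper asserts this; your parametrised-fibration argument fills in the detail), faithfulness for $\dim M\geq 3$ and $\pi_1(M)=1$ follows from the same $\Sigma_\ell/\Sigma_{\ell-k}\to\mathrm{Inj}(k,\ell)$ counting argument, and the converse uses explicit low-complexity counterexamples just as in the paper. Note only that your $d=2$ counterexample is stated for $M=D^2$; this does suffice because the case $\pi_1(M)\neq 1$ is already covered by your $(0,1)$ example and any simply-connected surface-with-boundary has interior homeomorphic to $\bR^2$, but it is worth saying so explicitly.
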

\begin{proof}
The set of morphisms $m \to n$ of $\cC(M)$ is empty if $m>n$, and if $m \leq n$ it is naturally identified with the orbit set $B_n(M) / B_m$, where $B_n(M) = \pi_1(C_n(M),\{p_1,\ldots,p_n\})$ and $B_m$ acts on $B_n(M)$ via the homomorphism $v_{n-m}^m \colon B_m \to B_n(M)$ of Definition \ref{def:sigmak-and-vkl} below followed by right-multiplication of $B_n(M)$ on itself. On the other hand, the set of morphisms $m \to n$ of $\cB(M)$ is also empty if $m>n$, and if $m \leq n$ it is a homotopy class of paths in $C_m(M)$ from the basepoint configuration $\{p_1,\ldots,p_m\}$ to a subconfiguration of $\{p_1,\ldots,p_n\}$.

The functor $\cC(M) \to \cB(M)$ is defined on morphisms as follows. Given a morphism $m \to n$ of $\cC(M)$, represented by a loop of configurations $\gamma$ in $C_n(M)$ based at $\{p_1,\ldots,p_n\}$, forget all strands of $\gamma$ that start at $p_i$ for $m+1 \leq i \leq n$. The result is a path of configurations in $C_m(M)$ representing a morphism $m \to n$ of $\cB(M)$.

Given any path of configurations $\delta$ in $C_m(M)$ from $\{p_1,\ldots,p_m\}$ to a subconfiguration of $\{p_1,\ldots,p_n\}$, it is always possible to ``adjoin strands'' to $\delta$ to extend it to a loop of configurations $\gamma$ in $C_n(M)$. This implies that the functor $\cC(M) \to \cB(M)$ is full.

The fact that the functor $\cC(M) \to \cB(M)$ is faithful if $\mathrm{dim}(M) \geq 3$ and $M$ is simply-connected is stated in Remark 5.10 of \cite{Krannich2019Homologicalstabilitytopological}. It also follows from Theorem 9 of \cite{FadellNeuwirth1962Configurationspaces}, which implies that the functor is given by $\Sigma_n / \Sigma_{n-m} \to \mathrm{Inj}(m,n)$ on morphism-sets (from $m$ to $n$); we already know that this is surjective, so injectivity follows from a simple counting argument.

If $M$ is $2$-dimensional or $\pi_1(M)$ is non-trivial, it is easy to construct pairs of distinct morphisms $1 \to 2$ in $\cC(M)$ that become equal in $\cB(M)$, i.e., after forgetting the second strand of a $2$-strand braid on $M$. Thus the conditions that $\mathrm{dim}(M) \geq 3$ and $M$ is simply-connected are necessary for $\cC(M) \to \cB(M)$ to be a faithful functor.
\end{proof}

\begin{summary}
\label{summary:braid-categories}
Let $M$ be a connected $d$-manifold with non-empty boundary, with $d \geq 2$, and let $D \subseteq \partial M$ be an embedded $(d-1)$-dimensional disc. There are then canonical functors
\begin{equation}
\label{eq:braid-category-functors}
\mathrm{Br}(M) \longhookrightarrow \cC(M) \longtwoheadrightarrow \cB(M) \longhookrightarrow \cB_\sharp(M)
\end{equation}
that all act by the identity on their common set of objects (which is $\bN$). The first and third functors are faithful, and the second functor is full. The second functor is also faithful (and therefore an isomorphism) if and only if $M$ is simply-connected and has dimension at least $3$.
\end{summary}

\section{Stabilisation maps and extensions to \texorpdfstring{$\cC(M)$}{C(M)} and \texorpdfstring{$\cB_\sharp(M)$}{B-sharp(M)}}\label{s:extension}

As in \S\ref{s:monodromy}, we fix the data $(M,D,*,b,\xi,x_0,c,\theta)$ of Notation \ref{notation-inputs}, namely a bundle $\xi \colon E \to M$ over a $d$-manifold, a disc $D \subseteq \partial M$, etc.

\paragraph{Stabilisation maps.}
Let us choose, once and for all, an element of $\lswc_{d-1}(\sa,\sm;\sm)$ consisting of one $(d-1)$-disc in a $(d-1)$-rectangle of width $1$, as well as an element of $\cmap{}{c,\frac12\partial}{D^d}{X}$ where the configuration has exactly one point. See Figure \ref{fig:stabilisation-maps}(a).

By Proposition \ref{p:e0-modules}, the pair $(\cgammad{}{c,D}{M}{\xi} , \cmap{}{c,\frac12\partial}{D^d}{X})$ is an algebra over the linear Swiss cheese operad $\lswc_{d-1}$ (\emph{i.e.~$\cgammad{}{c,D}{M}{\xi}$ is an $E_0$-module over the $E_{d-1}$-algebra $\cmap{}{c,\frac12\partial}{D^d}{X}$}). Moreover, the pair $(\dot{C}(M) , C(\mathring{D}^d))$ is an algebra over $\lswc_d$, and hence also over $\lswc_{d-1}$ by restriction, and the maps
\[
(\cgammad{}{c,D}{M}{\xi} , \cmap{}{c,\frac12\partial}{D^d}{X}) \too (\dot{C}(M) , C(\mathring{D}^d))
\]
that send a configuration-section to its underlying configuration (forgetting the section) are maps of $\lswc_{d-1}$-algebras. This structure induces (horizontal) \emph{stabilisation maps}
\begin{equation}
\label{eq:cgamma-stabilisation}
\centering
\begin{split}
\begin{tikzpicture}
[x=1mm,y=1mm]
\node (tl) at (0,15) {$\cgammad{k}{c,D}{M}{\xi}$};
\node (tr) at (30,15) {$\cgammad{k+1}{c,D}{M}{\xi}$};
\node (bl) at (0,0) {$\dot{C}_k(M)$};
\node (br) at (30,0) {$\dot{C}_{k+1}(M)$};
\draw[->] (tl) to (tr);
\draw[->] (bl) to (br);
\draw[->] (tl) to (bl);
\draw[->] (tr) to (br);
\end{tikzpicture}
\end{split}
\end{equation}
commuting with the (vertical) forgetful maps, for all $k \in \bN$. Concretely, the top horizontal map is given by the second line of \eqref{eq:lscd-1-action}, plugging in our choices of elements above. The bottom horizontal map is defined similarly, ignoring sections and considering just configurations. See Figure \ref{fig:stabilisation-maps}(b). Note that the bottom horizontal map of \eqref{eq:cgamma-stabilisation} is exactly as already described in Remark \ref{rmk:stabilisation}.

\begin{figure}[t]
\centering
\begin{tikzpicture}
[x=1mm,y=1mm]

\begin{scope}
\draw[blue] (-12,-7) -- (-12,7);
\draw[blue] (12,-7) -- (12,7);
\draw[black!20] (-12,-7) -- (12,-7);
\draw[black!20] (-12,7) -- (12,7);
\draw[red!50] (0,0) node[font=\footnotesize]{$1$} circle (5);
\node at (-12,-7) [anchor=north,font=\small] {$0$};
\node at (12,-7) [anchor=north,font=\small] {$1$};
\end{scope}

\begin{scope}[xshift=30mm]
\fill[green!15] (0,0) circle (8);
\draw[red!50] (0,0) circle (8);
\draw[red!50] (-8,0) arc (180:360:8 and 3.2);
\draw[red!50,densely dotted] (8,0) arc (0:180:8 and 3.2);
\node at (0,0) [green!50!black,fill,circle,inner sep=1pt] {};
\end{scope}

\node at (15,-10) [anchor=north,font=\small,align=left,text width=112mm] {\textbf{(a)} Choices of elements of $\lswc_{d-1}(\textcolor{red}{\sa},\textcolor{blue}{\sm};\textcolor{blue}{\sm})$ and of $\cmap{}{c,\frac12\partial}{D^d}{X}$. \\ The light green region represents a map $D^d \smallsetminus \{0\} \to X$ in one of the homotopy classes $c_D \subseteq [S^{d-1},X]$, where $c_D$ is determined by $c$ as in Definition \ref{d:cgammadot}, sending the southern hemisphere of $\partial D^d$ to $\{x_0\} \subseteq X.$};

\begin{scope}[yshift=-60mm,xshift=-30mm]
\fill[blue!10] (-10,0)--(-10,20)--(0,25)--(50,25)--(40,20)--(40,0)--cycle;
\fill[black!3] (40,0)--(40,20)--(50,25)--(90,25)--(90,5)--(80,0)--cycle;
\fill[green!5] (59,12.5) arc (180:0:6) -- (71,22.5) arc (0:180:6 and 1.5) -- cycle;
\fill[pattern=crosshatch dots,pattern color=green!80] (59,12.5) arc (180:0:6) -- (71,22.5) arc (0:180:6 and 1.5) -- cycle;
\draw[black!20] (59,12.5)--(59,22.5);
\draw[black!20] (71,12.5)--(71,22.5);

\draw[green!50!black] (0,0) rectangle (80,20);
\draw[green!50!black] (10,5) rectangle (90,25);
\draw[green!50!black] (0,0)--(10,5);
\draw[green!50!black] (0,20)--(10,25);
\draw[green!50!black] (80,0)--(90,5);
\draw[green!50!black] (80,20)--(90,25);
\draw[green!50!black] (40,0)--(40,20)--(50,25)--(50,5)--cycle;
\draw[green!50!black,densely dashed] (0,0)--(-10,0);
\draw[green!50!black,densely dashed] (0,20)--(-10,20);
\draw[green!50!black,densely dashed] (10,5)--(0,5);
\draw[green!50!black,densely dashed] (10,25)--(0,25);

\draw[decorate,decoration={brace,amplitude=3pt,mirror}] (-10,-1) -- (40,-1);
\node at (15,-2) [anchor=north,font=\small] {$\hat{M}_t$};
\node at (40,0) [fill,circle,inner sep=1pt] {};
\node at (40,-0.5) [anchor=north,font=\small] {$t$};
\node at (80,0) [fill,circle,inner sep=1pt] {};
\node at (80,-0.5) [anchor=north,font=\small] {$t+1$};

\fill[green!15] (65,12.5) circle (6);
\draw[red!50] (65,12.5) circle (6);
\draw[red!50] (59,12.5) arc (180:360:6 and 2.4);
\draw[red!50,densely dotted] (71,12.5) arc (0:180:6 and 2.4);
\node at (65,12.5) [green!50!black,fill,circle,inner sep=1pt] {};
\draw[black!20] (65,22.5) circle (6 and 1.5);

\node at (35,12.5) [green!50!black,fill,circle,inner sep=1pt] {};
\node at (21,6) [green!50!black,fill,circle,inner sep=1pt] {};
\node at (12,18) [green!50!black,fill,circle,inner sep=1pt] {};
\end{scope}

\node at (15,-68) [anchor=north,font=\small,align=left,text width=112mm] {\textbf{(b)} The stabilisation maps of \eqref{eq:cgamma-stabilisation}. Given a (blue) configuration-section $(z,s)$ lying in $\hat{M}_t$ with $s|_{D \times \{t\}} = \mathrm{const}_{x_0}$, add a new point at $p_{t+1} = (*,t+\tfrac12)$ and extend the section as illustrated: grey indicates the constant map to $x_0$, green indicates the configuration-section in $D^d$ chosen above and dotted-green indicates that the green region is extended, as in Figure \ref{fig:lscd-1-action}, by defining it to be independent of the vertical direction in this region.};

\end{tikzpicture}
\caption{Stabilisation maps for configuration-section spaces.}
\label{fig:stabilisation-maps}
\end{figure}

\begin{defn}
\label{def:sigmak-and-vkl}
The map
\[
\pi_1(\dot{C}_k(M)) \too \pi_1(\dot{C}_{k+1}(M))
\]
of fundamental groups induced by the stabilisation map \eqref{eq:cgamma-stabilisation} will be denoted by $\sigma_k$. Identifying the interior of the $(d-1)$-disc $D \subseteq \partial M$ with $(-1,1)^{d-1}$, we also have inclusions
\[
(-1,1) \times \{0\}^{d-2} \times (k,k+\ell) \longhookrightarrow \mathrm{int}(\hat{M}_{k+\ell})
\]
for integers $k\geq 0$ and $\ell \geq 1$, which induce maps
\begin{equation}
\label{eq:vkl-space}
C_\ell((-1,1) \times \{0\}^{d-2} \times (k,k+\ell)) \too \dot{C}_{k+\ell}(M)
\end{equation}
given by $S \mapsto (S \sqcup \{p_1,\ldots,p_k\} , k+\ell)$ as illustrated in Figure \ref{fig:vkl}. The induced map
\begin{equation}
\label{eq:vkl}
B_\ell \cong \pi_1(C_\ell((-1,1) \times \{0\}^{d-2} \times (k,k+\ell))) \too \pi_1(\dot{C}_{k+\ell}(M))
\end{equation}
of fundamental groups is denoted by $v_k^\ell$.
\end{defn}

\begin{figure}[t]
\centering
\begin{tikzpicture}
[x=1mm,y=1mm,scale=0.8]

\begin{scope}[xshift=-74mm,yshift=12.5mm]
\fill[red!10] (0,-5) rectangle (40,5);
\draw[red] (0,-5) rectangle (40,5);
\node at (10,-2) [green!50!black,fill,circle,inner sep=1pt] {};
\node at (4,3) [green!50!black,fill,circle,inner sep=1pt] {};
\node at (16,1) [green!50!black,fill,circle,inner sep=1pt] {};
\node at (32,-3) [green!50!black,fill,circle,inner sep=1pt] {};
\end{scope}

\node at (-22,12.5) {$\longmapsto$};

\draw[green!50!black] (60,25)--(60,5);
\fill[red!10] (50,10)--(60,15)--(110,15)--(100,10)--cycle;
\draw[red] (50,10)--(60,15)--(110,15)--(100,10)--cycle;

\draw[green!50!black] (0,0) rectangle (100,20);
\draw[green!50!black] (10,5) rectangle (110,25);
\draw[green!50!black] (0,0)--(10,5);
\draw[green!50!black] (0,20)--(10,25);
\draw[green!50!black] (100,0)--(110,5);
\draw[green!50!black] (100,20)--(110,25);
\draw[green!50!black] (25,0)--(25,20)--(35,25)--(35,5)--cycle;
\draw[green!50!black] (60,5)--(50,0)--(50,20)--(60,25);

\draw[green!50!black,densely dashed] (0,0)--(-10,0);
\draw[green!50!black,densely dashed] (0,20)--(-10,20);
\draw[green!50!black,densely dashed] (10,5)--(0,5);
\draw[green!50!black,densely dashed] (10,25)--(0,25);

\node at (0,0) [fill,inner sep=1pt] {};
\node at (0,-0.5) [anchor=north,font=\small] {$k-2$};
\node at (25,0) [fill,inner sep=1pt] {};
\node at (25,-0.5) [anchor=north,font=\small] {$k-1$};
\node at (50,0) [fill,inner sep=1pt] {};
\node at (50,-0.5) [anchor=north,font=\small] {$k$};
\node at (100,0) [fill,inner sep=1pt] {};
\node at (100,-0.5) [anchor=north,font=\small] {$k+\ell$};

\node at (-7.5,12.5) [green!50!black,fill,circle,inner sep=1pt] {};
\node at (-7.5,12.5) [anchor=north,font=\small] {$p_{k-2}$};
\node at (17.5,12.5) [green!50!black,fill,circle,inner sep=1pt] {};
\node at (17.5,12.5) [anchor=north,font=\small] {$p_{k-1}$};
\node at (42.5,12.5) [green!50!black,fill,circle,inner sep=1pt] {};
\node at (42.5,12.5) [anchor=north,font=\small] {$p_k$};

\node at (65.5,11.5) [green!50!black,fill,circle,inner sep=1pt] {};
\node at (63,14) [green!50!black,fill,circle,inner sep=1pt] {};
\node at (76,13) [green!50!black,fill,circle,inner sep=1pt] {};
\node at (92,11) [green!50!black,fill,circle,inner sep=1pt] {};

\end{tikzpicture}
\caption{The map \eqref{eq:vkl-space} inducing $v_k^\ell \colon B_\ell \to \pi_1(\dot{C}_{k+\ell}(M))$.}
\label{fig:vkl}
\end{figure}

The maps $\sigma_k \colon \pi_1(\dot{C}_k(M)) \to \pi_1(\dot{C}_{k+1}(M))$ and $v_k^\ell \colon B_\ell \to \pi_1(\dot{C}_{k+\ell}(M))$ of Definition \ref{def:sigmak-and-vkl} may be used to characterise extensions of functors along the inclusion $\mathrm{Br}(M) \subset \cC(M)$:

\begin{prop}[{\cite[\S 5.2]{Krannich2019Homologicalstabilitytopological}}]
\label{p:extension-criterion}
Let $M$ be a connected $d$-manifold, for $d \geq 2$, and let $D \subseteq \partial M$ be an embedded $(d-1)$-dimensional disc. Choose a collar neighbourhood $b$ as in Notation \ref{notation-inputs} and take the basepoint $* \in \partial M$ to be the centre of $D$. Let $F \colon \mathrm{Br}(M) \to \cD$ be a functor. An extension of $F$ to the larger category $\cC(M) \supseteq \mathrm{Br}(M)$ is equivalent to a choice of morphism $s_k \colon F(k) \to F(k+1)$ of $\cD$ for each $k \in \bN$ such that, for any $k \geq 0$, $\ell \geq 1$, $\alpha \in \pi_1(\dot{C}_k(M))$ and $\beta \in B_\ell$, the following two diagrams commute, where $s_k^\ell = s_{k+\ell -1} \circ \cdots \circ s_{k+1} \circ s_k$.
\begin{equation}
\label{eq:extension-criterion}
\centering
\begin{split}
\begin{tikzpicture}
[x=1mm,y=1mm]
\node (tl) at (0,12) {$F(k)$};
\node (tr) at (30,12) {$F(k+1)$};
\node (bl) at (0,0) {$F(k)$};
\node (br) at (30,0) {$F(k+1)$};
\draw[->] (tl) to node[above,font=\small]{$s_k$} (tr);
\draw[->] (bl) to node[above,font=\small]{$s_k$} (br);
\draw[->] (tl) to node[left,font=\small]{$F(\alpha)$} (bl);
\draw[->] (tr) to node[right,font=\small]{$F(\sigma_k(\alpha))$} (br);
\begin{scope}[xshift=60mm]
\node (l) at (0,6) {$F(k)$};
\node (tr) at (30,12) {$F(k+\ell)$};
\node (br) at (30,0) {$F(k+\ell)$};
\draw[->] (l) to node[above,font=\small]{$s_k^\ell$} (tr);
\draw[->] (l) to node[below,font=\small]{$s_k^\ell$} (br);
\draw[->] (tr) to node[right,font=\small]{$F(v_k^\ell(\beta))$} (br);
\end{scope}
\end{tikzpicture}
\end{split}
\end{equation}
\end{prop}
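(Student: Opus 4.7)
The plan is to present $\cC(M)$ explicitly via generators and relations that exactly match the data described in \eqref{eq:extension-criterion}. First I would verify that every morphism of $\cC(M)(k,k+\ell)$ can be written in the form $\alpha \circ s_k^\ell$ for some $\alpha \in \pi_1(\dot{C}_{k+\ell}(M))$, where $s_k \colon k \to k+1$ is the distinguished stabilisation morphism of $\cC(M)$ corresponding to the pair $(1,\mathrm{id}_{k+1})$ in the Quillen bracket $\langle\cB,\mathrm{Br}(M)\rangle$. This is immediate from Definition \ref{d:quillen}: any morphism is represented by a pair $(\ell,\varphi)$ with $\varphi \in \mathrm{Br}(M)(\ell \oplus k, k+\ell) = \pi_1(\dot{C}_{k+\ell}(M))$, and $(\ell,\varphi) = \varphi \circ s_k^\ell$ using the composition formula of the bracket construction.

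Next I would identify the two relevant families of relations in $\cC(M)$. The first comes from the equivalence relation defining the colimit in Definition \ref{d:quillen}: for any $\beta \in B_\ell = \mathrm{Aut}_\cB(\ell)$ one has $(\ell,\varphi) \sim (\ell,\varphi \circ (\beta \oplus \mathrm{id}_k))$. Taking $\varphi = \mathrm{id}_{k+\ell}$ and identifying $\beta \oplus \mathrm{id}_k \in \pi_1(\dot{C}_{k+\ell}(M))$ with $v_k^\ell(\beta)$ -- via the description of the $\cB$-action on $\mathrm{Br}(M)$ by boundary connected sum and the explicit picture of $v_k^\ell$ in Figure \ref{fig:vkl} -- this yields the relation $s_k^\ell = v_k^\ell(\beta) \circ s_k^\ell$, which is exactly the right triangle of \eqref{eq:extension-criterion}. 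The second family comes from precomposing $s_k$ by $\alpha \in \pi_1(\dot{C}_k(M))$: the composition rule of the bracket gives $s_k \circ \alpha = (1,\mathrm{id}_{k+1}) \circ \alpha = (1,\mathrm{id}_1 \oplus \alpha) = \sigma_k(\alpha) \circ s_k$, using the identification of $\mathrm{id}_1 \oplus \alpha$ with $\sigma_k(\alpha)$ under the canonical agreement of the monoidal action with the stabilisation map. This is the left square of \eqref{eq:extension-criterion}.

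Given any data $(F,\{s_k\})$ satisfying both relations, I would define the extension $\widetilde{F}\colon\cC(M)\to\cD$ by sending the morphism $\alpha \circ s_k^\ell$ (in our standard form) to $F(\alpha) \circ s_{k+\ell-1} \circ \cdots \circ s_k$ in $\cD$. Well-definedness -- independence of the choice of representative within the $B_\ell$-orbit -- follows from the right triangle, while functoriality on composable morphisms follows by a direct calculation using the composition formula of the bracket together with repeated application of the left square to commute automorphisms of $\mathrm{Br}(M)$ past stabilisations.

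The main obstacle, and the step requiring the most care, is verifying that the two families of relations above are in fact sufficient: that any two representations of the same morphism of $\cC(M)$ in the form $\alpha \circ s_k^\ell$ can be connected by a finite sequence of moves of these two kinds. This reduces to tracing through the explicit description of the colimit defining the bracket, using that $\mathrm{Br}(M)$ is a groupoid so that any equality $\varphi_1 = \varphi_2 \circ (\theta \oplus \mathrm{id}_k)$ of representatives is controlled by a single element $\theta \in B_\ell$, and then showing that the corresponding identification of $\alpha \circ s_k^\ell$ expressions is precisely an instance of the right-triangle relation (possibly combined with the left-square relation to absorb automorphisms into $\alpha$).
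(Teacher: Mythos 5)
Your argument is correct. Note that the paper does not prove this proposition at all: it is imported verbatim from Krannich (\S 5.2 of the cited paper), so there is no in-paper proof to compare against; your direct unwinding of the Quillen bracket construction is the standard argument and it goes through. Two small points of hygiene: the groupoid property you need for sufficiency of the relations is that of the \emph{acting} category $\cB$ (so that the generating relation $(\ell,\varphi_1)\sim(\ell,\varphi_2\circ(\theta\oplus\mathrm{id}_k))$ is already symmetric and transitive, whence two representatives of the same morphism differ by a single $\theta\in B_\ell$), not of $\mathrm{Br}(M)$ itself, although both are groupoids; and the identifications $\theta\oplus\mathrm{id}_k=v_k^\ell(\theta)$ and $\mathrm{id}_1\oplus\alpha=\sigma_k(\alpha)$ that you invoke are exactly the conventions the paper uses later (e.g.\ the identity $s(\iota_k)=v_k^2(\tau_1)\circ\iota_{k+1}$ in \S\ref{s:polynomiality}), so they are consistent with the geometric definitions of $\sigma_k$ and $v_k^\ell$ in Definition \ref{def:sigmak-and-vkl}.
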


\begin{prop}
\label{p:extension-to-C}
The stabilisation maps \eqref{eq:cgamma-stabilisation} determine an extension of the monodromy functor $\eqref{eq:associated-monodromy-functor} = \mathrm{Mon}^{c,D}(M,\xi) \colon \mathrm{Br}(M) \to \hotop$ to a functor
\begin{equation}
\label{eq:extension-to-C}
\smash{\widetilde{\mathrm{Mon}}}^{c,D}(M,\xi) \colon \cC(M) \too \hotop .
\end{equation}
\end{prop}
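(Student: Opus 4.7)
The plan is to invoke Proposition \ref{p:extension-criterion}: it suffices to choose stabilisation morphisms $s_k \colon F(k) \to F(k+1)$ in $\hotop$, where $F = \mathrm{Mon}^{c,D}(M,\xi)$ and $F(k) = \Gamma_k^{c,D}(M;\xi)$, and to verify the two commuting diagrams of \eqref{eq:extension-criterion}. The natural choice for $s_k$ is the homotopy class of the restriction of the top horizontal map $\Phi_k$ of \eqref{eq:cgamma-stabilisation} to the fibre of the Hurewicz fibration \eqref{eq:cdot-fibration} over the basepoint; this restriction is well defined because the lower horizontal map of \eqref{eq:cgamma-stabilisation} sends $(z_k, k)$ to $(z_{k+1}, k+1)$, and the fibre is identified with $F(k)$ by Corollary \ref{c:Hurewicz-fibration-sequence}.

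For the first diagram, I would exploit naturality of the monodromy action with respect to the map of Hurewicz fibrations \eqref{eq:cgamma-stabilisation}. Given $\alpha \in \pi_1(\dot{C}_k(M))$ represented by a loop $\gamma$, choose a lift $g \colon F(k) \times I \to \cgammad{k}{c,D}{M}{\xi}$ of $\gamma$ starting at the inclusion. Then $\Phi_k \circ g$ is a lift of a representative of $\sigma_k(\alpha)$ starting at $s_k$, with endpoint $s_k \circ F(\alpha)$. On the other hand, if $h$ is a lift of the same loop starting at the inclusion of $F(k+1)$, then $h \circ (s_k \times \mathrm{id})$ is another lift starting at $s_k$, with endpoint $F(\sigma_k(\alpha)) \circ s_k$. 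Uniqueness of lifts up to homotopy, argued via the homotopy lifting property as in the proof of Lemma \ref{l-monodromy-action}, equates these two endpoints and gives the first commutativity.

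The second diagram, which demands $F(v_k^\ell(\beta)) \circ s_k^\ell \simeq s_k^\ell$ for every $\beta \in B_\ell$, will be the main obstacle. I plan to prove it by constructing, for each $x \in F(k)$, an explicit lift of $v_k^\ell(\beta)$ based at $s_k^\ell(x)$ that is itself a loop in $\cgammad{k+\ell}{c,D}{M}{\xi}$; this forces the monodromy of $v_k^\ell(\beta)$ on $s_k^\ell(x)$ to be trivial up to homotopy. The construction exploits the $\lswc_{d-1}$-algebra structure of Proposition \ref{p:e0-modules}\ref{lsc3}: the iterated stabilisation $s_k^\ell(x)$ is the value of the action map $\rho$ at $(\omega, \mu, \ldots, \mu, x)$, where $\omega \in \lswc_{d-1}(\sa^\ell, \sm; \sm)$ is a fixed iterate of the chosen operation and $\mu \in \cmap{1}{c, \frac12\partial}{D^d}{X}$ is the chosen one-point element. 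The decisive observation is that, because all $\ell$ aerial inputs are identical copies of $\mu$, any path of operadic data whose endpoints differ only by a permutation of the $\ell$ inputs becomes, upon applying $\rho$, an honest loop based at $s_k^\ell(x)$. The remaining geometric task is to produce such a path whose projection to $\dot{C}_{k+\ell}(M)$ represents $v_k^\ell(\beta)$.

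When $d \geq 3$, the cuboid underlying $\lswc_{d-1}(\sa^\ell, \sm; \sm)$ has dimension $d - 1 \geq 2$ and contains the slab of Definition \ref{def:sigmak-and-vkl} as a $2$-dimensional coordinate slice, so the braid $\beta$ can be realised directly by a path in the ordered configuration space of little discs. The case $d = 2$ is the most delicate, since $\lswc_1$ is itself $1$-dimensional and does not host non-trivial braids through variations of the operation alone; there the braid must be realised through the internal topology of the algebra, specifically via loops in $\cmap{\ell}{c, \frac12\partial}{D^2}{X}$ (which fibres over $C_\ell(\mathring{D}^2)$ with fundamental group $B_\ell$) that project to $\beta$ and are based at the $\ell$-fold $\lswc_1$-combination of copies of $\mu$, plugged back in through the single aerial slot of an operation in $\lswc_1(\sa, \sm; \sm)$. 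In either case, the image under $\rho$ of the resulting path is the desired loop in $\cgammad{k+\ell}{c,D}{M}{\xi}$ based at $s_k^\ell(x)$ and projecting to $v_k^\ell(\beta)$, which completes the verification.
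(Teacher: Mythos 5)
Your overall architecture coincides with the paper's: invoke Proposition \ref{p:extension-criterion}, take $s_k$ to be the fibre restriction of the top map of \eqref{eq:cgamma-stabilisation}, and verify the two diagrams of \eqref{eq:extension-criterion}. Your treatment of the first diagram via uniqueness of lifts for the map of Hurewicz fibrations is valid (the paper instead argues with explicit point-pushing diffeomorphisms, extending $\theta_\alpha$ by the identity over $D \times [k,k+1]$, but the two arguments are interchangeable). For $d \geq 3$ your operadic realisation of $\beta$ is essentially the paper's ``bird's eye view'' argument: the $\ell$ aerial inputs are identical copies of $\mu$, so permuting the little discs inside the $(d-1)$-dimensional cuboid produces a genuine loop over a representative of $v_k^\ell(\beta)$; and since $\pi_1$ of the configuration space of an open $d$-ball is $\Sigma_\ell$ for $d \geq 3$, only the underlying permutation matters, so the projected loop does represent $v_k^\ell(\beta)$.

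The gap is in the case $d=2$. There you assert the existence of loops in $\cmap{\ell}{c,\frac12\partial}{D^2}{X}$ that project to $\beta \in B_\ell = \pi_1(C_\ell(\mathring{D}^2))$ \emph{and are based at} the $\ell$-fold combination $\mu^{\cdot\ell}$ of your chosen one-point element. The fact that $\pi_1$ of the base is $B_\ell$ does not give this: a loop in the base of a fibration lifts to a loop based at a given point of the total space only if its monodromy preserves the path-component of that point in the fibre, and for a generic point of the fibre $\mathrm{Map}^{c,\partial_0}(D^2 \smallsetminus z_\ell, X)$ this fails. Identifying $\pi_0$ of this fibre with ordered $\ell$-tuples of elements of $\pi_1(X,x_0)$ lying over $c_D$, the monodromy of the standard generator $\tau_i$ is the Hurwitz move $(\ldots,\lambda_i,\lambda_{i+1},\ldots) \mapsto (\ldots,\lambda_i\lambda_{i+1}\lambda_i^{-1},\lambda_i,\ldots)$, which does not fix a tuple with distinct entries. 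The based lifts you need exist precisely because $\mu^{\cdot\ell}$ corresponds to the constant tuple $(\kappa,\ldots,\kappa)$, which is a fixed point of this action; this computation is the actual content of the $d=2$ case (it is how the paper concludes, via Figure \ref{fig:2dim-case}) and must be supplied rather than assumed.
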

\begin{proof}
We will apply Proposition \ref{p:extension-criterion} with $\cD = \hotop$ and $F = \eqref{eq:associated-monodromy-functor} = \mathrm{Mon}^{c,D}(M,\xi)$. Recall that $\mathrm{Mon}^{c,D}(M,\xi)$ sends $k$ to the fibre $\Gamma_k^{c,D}(M,\xi)$ of the Hurewicz fibration \eqref{eq:cdot-fibration} over the basepoint $(z_k,k) \in \dot{C}_k(M)$. The bottom horizontal map of the map of Hurewicz fibrations \eqref{eq:cgamma-stabilisation} preserves basepoints, so its top horizontal map restricts to a map of fibres
\[
F(k) = \Gamma_k^{c,D}(M,\xi) \too \Gamma_{k+1}^{c,D}(M,\xi) = F(k+1),
\]
which we define to be $s_k$. It remains to check the two conditions \eqref{eq:extension-criterion} of Proposition \ref{p:extension-criterion}.

The element $\alpha \in \pi_1(\dot{C}_k(M))$ acts on $F(k) = \Gamma_k^{c,D}(M,\xi)$ through a ``point-pushing'' diffeomorphism $\theta_\alpha$ of $\hat{M}_k \smallsetminus z_k$. (Only the isotopy class of $\theta_\alpha$ is important, since the diagrams \eqref{eq:extension-criterion} live in the homotopy category.) The element $\sigma_k(\alpha) \in \pi_1(\dot{C}_{k+1}(M))$ is the extension of $\alpha$ to a loop of $k+1$ points in $\hat{M}_{k+1}$ given by leaving the point $p_{k+1}$ fixed. Hence we may choose the point-pushing diffeomorphism $\theta_{\sigma_k(\alpha)}$ of $\hat{M}_{k+1} \smallsetminus z_{k+1}$ (through which $\sigma_k(\alpha)$ acts on $F(k+1) = \Gamma_{k+1}^{c,D}(M,\xi)$) to be the extension of $\theta_\alpha$ by the identity on $(D \times [k,k+1]) \smallsetminus \{p_{k+1}\}$. This implies that the left-hand square of \eqref{eq:extension-criterion} commutes up to homotopy.

Now consider any element $s \in F(k)$, a section of $\hat{\xi}$ defined on $\hat{M}_k \smallsetminus z_k$. Extend it in a standard way $\ell$ times, as shown in Figure \ref{fig:stabilisation-maps}(b), to obtain a section $\bar{s}$ of $\hat{\xi}$ defined on $\hat{M}_{k+\ell} \smallsetminus z_{k+\ell}$. The restriction of $\bar{s}$ to $P = (D \times [k,k+\ell]) \smallsetminus \{p_{k+1},\ldots,p_{k+\ell}\}$ may be thought of as a map $P \to X$, since $\hat{\xi}$ is trivial over $P$. Note that this map $P \to X$ does not in fact depend on $s$: it just consists of $\ell$ concatenated copies of the standard map to $X$ illustrated on the right-hand side of Figure \ref{fig:stabilisation-maps}(b). We denote it by $f_\ell \colon P \to X$. Any element $\beta \in B_\ell$ determines a self-diffeomorphism (up to isotopy) $\beta \colon P \to P$ by point-pushing.

\textbf{Claim:} $f_\ell \circ \beta$ is homotopic to $f_\ell$ through maps sending the two ends $D \times \{k\}$ and $D \times \{k+\ell\}$ to the basepoint $x_0 \in X$.

This claim implies that $\bar{s} \cdot \beta$ is homotopic to $\bar{s}$ through maps sending $D \times \{k+\ell\}$ to $x_0$, in other words, there is a path $\bar{s} \cdot \beta \sim \bar{s}$ in $\Gamma_{k+\ell}^{c,D}(M,\xi) = F(k+\ell)$, and moreover these paths may be chosen continuously in $s$. In other words, the right-hand triangle of \eqref{eq:extension-criterion} commutes up to homotopy.

It therefore remains to prove the claim above, and it will suffice to prove it when $\beta$ is the standard generator of $B_\ell$ that interchanges the two punctures $p_i$ and $p_{i+1}$ for $k+1 \leq i \leq \ell-1$. We will do this diagrammatically for dimensions $d\geq 3$ and by considering fundamental groups for dimension $d=2$.

First assume that $d\geq 3$. In this case, Figure \ref{fig:birds-eye-view} illustrates a ``bird's eye'' view of the map $f_\ell \colon P \to X$, where $P = (D \times [k,k+\ell]) \smallsetminus \{p_{k+1},\ldots,p_{k+\ell}\}$, by collapsing the ``vertical'' direction of Figures \ref{fig:stabilisation-maps}(b) and \ref{fig:vkl} (corresponding to the last copy of $[-1,1]$ in our identification of $D \subseteq \partial M$ with $[-1,1]^{d-1}$). Since $d\geq 3$ the resulting bird's-eye-view picture still has $d-1 \geq 2$ dimensions (only two are pictured in Figure \ref{fig:birds-eye-view}, of course). Now consider the point-pushing diffeomorphism $\beta \colon P \to P$ corresponding to the generator of the braid group that interchanges the punctures $p_i$ and $p_{i+1}$. From Figure \ref{fig:birds-eye-view}, and due to the fact that (the bird's eye view of) $f_\ell$ is \emph{the same} in a small neighbourhood of each puncture $p_{k+1},\ldots,p_{k+\ell}$, it is clear that the homotopy class of $f_\ell \circ \beta$ is the same as that of $f_\ell$. Moreover, a homotopy connecting them may be chosen to be supported in a small neighbourhood of an arc connecting $p_i$ and $p_{i+1}$, so it does not affect the ends $D \times \{k\}$ and $D \times \{k+\ell\}$ of $P$, which are therefore still mapped to the basepoint $x_0 \in X$ during the homotopy. This proves the claim when $d\geq 3$.

When $d=2$ we do not collapse the vertical direction, and in this case Figure \ref{fig:2dim-case}(a) illustrates the map $f_\ell \colon P \to X$ without any dimension reduction, and Figure \ref{fig:2dim-case}(b) illustrates the effect of the point-pushing diffeomorphism $\beta \colon P \to P$ that interchanges the punctures $p_i$ and $p_{i+1}$. We will argue that $\ref{fig:2dim-case}\text{(a)} \simeq \ref{fig:2dim-case}\text{(b)}$ relative to the left, bottom and right edges of the rectangle (and hence in particular relative to the left and right edges of the rectangle, which is the statement of the claim above).

A map $P \to X$ sending the left, bottom and right edges of the rectangle to the basepoint $x_0 \in X$ corresponds, up to relative homotopy, to an ordered $\ell$-tuple of elements of $\pi_1(X,x_0)$. Recall that, when defining the stabilisation maps, we chose an element of $\cmap{1}{c,\frac12\partial}{D^2}{X}$ (see Figure \ref{fig:stabilisation-maps}(a)), which is, up to homotopy, a choice of element $\kappa \in \pi_1(X,x_0)$ lying in one of the conjugacy classes $c_D \subseteq [S^1,X] = \mathrm{Conj}(\pi_1(X,x_0))$, where $c_D$ is determined by $c$ as explained in Definition \ref{d:cgammadot}. The map $\ref{fig:2dim-case}\text{(a)} \colon P \to X$ therefore corresponds to the ordered $\ell$-tuple $(\kappa,\kappa,\ldots,\kappa)$. Now the effect of the point-pushing diffeomorphism $\beta \colon P \to P$, under this correspondence, is to interchange the $i$-th and $(i+1)$-st elements of an ordered tuple while conjugating one by the other; in symbols: $(\ldots,\lambda_i,\lambda_{i+1},\ldots) \mapsto (\ldots,\lambda_i \lambda_{i+1} \lambda_i^{-1} , \lambda_i , \ldots)$. But the ordered $\ell$-tuple $(\kappa,\kappa,\ldots,\kappa)$ corresponding to \ref{fig:2dim-case}(a) is clearly fixed under this action, so $\ref{fig:2dim-case}\text{(b)} = \ref{fig:2dim-case}\text{(a)} \circ \beta$ also corresponds to $(\kappa,\kappa,\ldots,\kappa)$, and therefore $\ref{fig:2dim-case}\text{(a)} \simeq \ref{fig:2dim-case}\text{(b)}$ relative to the left, bottom and right edges of the rectangle.
\end{proof}

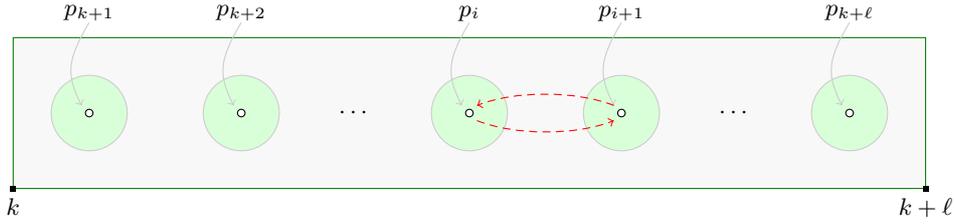
\begin{figure}[t]
\centering
\begin{tikzpicture}
[x=1mm,y=1mm,font=\small]

\fill[black!3] (0,0) rectangle (120,20);
\fill[green!15] (10,10) circle (5);
\fill[green!15] (30,10) circle (5);
\fill[green!15] (60,10) circle (5);
\fill[green!15] (80,10) circle (5);
\fill[green!15] (110,10) circle (5);

\draw[green!50!black] (0,0) rectangle (120,20);
\node at (0,0) [fill,inner sep=1pt] {};
\node at (0,0) [anchor=north,font=\small] {$k$};
\node at (120,0) [fill,inner sep=1pt] {};
\node at (120,0) [anchor=north] {$k+\ell$};
\node at (10,10) [fill,white,circle,inner sep=1pt] {};
\node at (30,10) [fill,white,circle,inner sep=1pt] {};
\node at (60,10) [fill,white,circle,inner sep=1pt] {};
\node at (80,10) [fill,white,circle,inner sep=1pt] {};
\node at (110,10) [fill,white,circle,inner sep=1pt] {};
\node at (10,10) [draw,circle,inner sep=1pt] {};
\node at (30,10) [draw,circle,inner sep=1pt] {};
\node at (60,10) [draw,circle,inner sep=1pt] {};
\node at (80,10) [draw,circle,inner sep=1pt] {};
\node at (110,10) [draw,circle,inner sep=1pt] {};
\node at (10,21) [anchor=south] {$p_{k+1}$};
\draw[black!20,->] (10,22) .. controls (7,17) and (7,15) .. (9,11);
\node at (30,21) [anchor=south] {$p_{k+2}$};
\draw[black!20,->] (30,22) .. controls (27,17) and (27,15) .. (29,11);
\node at (60,21) [anchor=south] {$p_i$};
\draw[black!20,->] (60,22) .. controls (57,17) and (57,15) .. (59,11);
\node at (80,21) [anchor=south] {$p_{i+1}$};
\draw[black!20,->] (80,22) .. controls (77,17) and (77,15) .. (79,11);
\node at (110,21) [anchor=south] {$p_{k+\ell}$};
\draw[black!20,->] (110,22) .. controls (107,17) and (107,15) .. (109,11);

\node at (45,10) {$\cdots$};
\node at (95,10) {$\cdots$};

\draw[black!20] (10,10) circle (5);
\draw[black!20] (30,10) circle (5);
\draw[black!20] (60,10) circle (5);
\draw[black!20] (80,10) circle (5);
\draw[black!20] (110,10) circle (5);

\draw[red,densely dashed,->] (79,11) .. controls (74,13) and (66,13) .. (61,11);
\draw[red,densely dashed,->] (61,9) .. controls (66,7) and (74,7) .. (79,9);

\end{tikzpicture}
\caption{A bird's eye view of the map $f_\ell \colon P \to X$, where $P = (D \times [k,k+\ell]) \smallsetminus \{p_{k+1},\ldots,p_{k+\ell}\}$. Grey indicates the constant map to the basepoint $x_0 \in X$ and the green regions are mapped to $X$ as in Figure \ref{fig:stabilisation-maps}(b), depending in particular on our choices in Figure \ref{fig:stabilisation-maps}(a). The red arrows illustrate the effect of the point-pushing diffeomorphism $\beta \colon P \to P$.}
\label{fig:birds-eye-view}
\end{figure}

\begin{figure}[t]
\centering
\begin{tikzpicture}
[x=1mm,y=1mm,font=\small]

\node at (-5,10) [font=\normalsize] {\textbf{(a)}};
\fill[black!3] (0,0) rectangle (120,20);

\fill[green!15] (5,10) arc (180:360:5) -- (15,20) -- (5,20) -- cycle;
\fill[green!15] (25,10) arc (180:360:5) -- (35,20) -- (25,20) -- cycle;
\fill[green!15] (55,10) arc (180:360:5) -- (65,20) -- (55,20) -- cycle;
\fill[green!15] (75,10) arc (180:360:5) -- (85,20) -- (75,20) -- cycle;
\fill[green!15] (105,10) arc (180:360:5) -- (115,20) -- (105,20) -- cycle;

\draw[green!50!black] (0,0) rectangle (120,20);
\node at (0,0) [fill,inner sep=1pt] {};
\node at (0,0) [anchor=north,font=\small] {$k$};
\node at (120,0) [fill,inner sep=1pt] {};
\node at (120,0) [anchor=north] {$k+\ell$};
\node at (10,10) [fill,white,circle,inner sep=1pt] {};
\node at (30,10) [fill,white,circle,inner sep=1pt] {};
\node at (60,10) [fill,white,circle,inner sep=1pt] {};
\node at (80,10) [fill,white,circle,inner sep=1pt] {};
\node at (110,10) [fill,white,circle,inner sep=1pt] {};
\node at (10,10) [draw,circle,inner sep=1pt] {};
\node at (30,10) [draw,circle,inner sep=1pt] {};
\node at (60,10) [draw,circle,inner sep=1pt] {};
\node at (80,10) [draw,circle,inner sep=1pt] {};
\node at (110,10) [draw,circle,inner sep=1pt] {};
\node at (10,21) [anchor=south] {$p_{k+1}$};
\draw[black!20,->] (10,22) .. controls (7,17) and (7,15) .. (9,11);
\node at (30,21) [anchor=south] {$p_{k+2}$};
\draw[black!20,->] (30,22) .. controls (27,17) and (27,15) .. (29,11);
\node at (60,21) [anchor=south] {$p_i$};
\draw[black!20,->] (60,22) .. controls (57,17) and (57,15) .. (59,11);
\node at (80,21) [anchor=south] {$p_{i+1}$};
\draw[black!20,->] (80,22) .. controls (77,17) and (77,15) .. (79,11);
\node at (110,21) [anchor=south] {$p_{k+\ell}$};
\draw[black!20,->] (110,22) .. controls (107,17) and (107,15) .. (109,11);

\node at (45,10) {$\cdots$};
\node at (95,10) {$\cdots$};

\draw[black!20] (5,20) -- (5,10) arc (180:360:5) -- (15,20);
\draw[black!20] (25,20) -- (25,10) arc (180:360:5) -- (35,20);
\draw[black!20] (55,20) -- (55,10) arc (180:360:5) -- (65,20);
\draw[black!20] (75,20) -- (75,10) arc (180:360:5) -- (85,20);
\draw[black!20] (105,20) -- (105,10) arc (180:360:5) -- (115,20);

\begin{scope}[yshift=-30mm]
\node at (-5,10) [font=\normalsize] {\textbf{(b)}};
\fill[black!3] (0,0) rectangle (120,20);

\fill[green!15] (5,10) arc (180:360:5) -- (15,20) -- (5,20) -- cycle;
\fill[green!15] (25,10) arc (180:360:5) -- (35,20) -- (25,20) -- cycle;
\fill[green!15] (75,20) .. controls (75,15) and (55,15) .. (55,10) arc (180:360:5) .. controls (65,15) and (85,15) .. (85,20) -- cycle;
\fill[green!15] (65,20) .. controls (65,15) and (54,15) .. (54,10) arc (180:270:6) .. controls (65,4) and (75,5) .. (75,10) arc (180:0:5) .. controls (85,5) and (70,1) .. (60,1) arc (270:180:9) .. controls (51,15) and (55,15) .. (55,20) -- cycle;
\fill[green!15] (105,10) arc (180:360:5) -- (115,20) -- (105,20) -- cycle;

\draw[green!50!black] (0,0) rectangle (120,20);
\node at (0,0) [fill,inner sep=1pt] {};
\node at (0,0) [anchor=north,font=\small] {$k$};
\node at (120,0) [fill,inner sep=1pt] {};
\node at (120,0) [anchor=north] {$k+\ell$};
\node at (10,10) [fill,white,circle,inner sep=1pt] {};
\node at (30,10) [fill,white,circle,inner sep=1pt] {};
\node at (60,10) [fill,white,circle,inner sep=1pt] {};
\node at (80,10) [fill,white,circle,inner sep=1pt] {};
\node at (110,10) [fill,white,circle,inner sep=1pt] {};
\node at (10,10) [draw,circle,inner sep=1pt] {};
\node at (30,10) [draw,circle,inner sep=1pt] {};
\node at (60,10) [draw,circle,inner sep=1pt] {};
\node at (80,10) [draw,circle,inner sep=1pt] {};
\node at (110,10) [draw,circle,inner sep=1pt] {};
\node at (10,21) [anchor=south] {$p_{k+1}$};
\draw[black!20,->] (10,22) .. controls (7,17) and (7,15) .. (9,11);
\node at (30,21) [anchor=south] {$p_{k+2}$};
\draw[black!20,->] (30,22) .. controls (27,17) and (27,15) .. (29,11);
\node at (60,21) [anchor=south] {$p_i$};
\draw[black!20,->] (60,22) .. controls (57,17) and (57,15) .. (59,11);
\node at (80,21) [anchor=south] {$p_{i+1}$};
\draw[black!20,->] (80,22) .. controls (77,17) and (77,15) .. (79,11);
\node at (110,21) [anchor=south] {$p_{k+\ell}$};
\draw[black!20,->] (110,22) .. controls (107,17) and (107,15) .. (109,11);

\node at (45,10) {$\cdots$};
\node at (95,10) {$\cdots$};

\draw[black!20] (5,20) -- (5,10) arc (180:360:5) -- (15,20);
\draw[black!20] (25,20) -- (25,10) arc (180:360:5) -- (35,20);
\draw[black!20] (75,20) .. controls (75,15) and (55,15) .. (55,10) arc (180:360:5) .. controls (65,15) and (85,15) .. (85,20);
\draw[black!20] (65,20) .. controls (65,15) and (54,15) .. (54,10) arc (180:270:6) .. controls (65,4) and (75,5) .. (75,10) arc (180:0:5) .. controls (85,5) and (70,1) .. (60,1) arc (270:180:9) .. controls (51,15) and (55,15) .. (55,20);
\draw[black!20] (105,20) -- (105,10) arc (180:360:5) -- (115,20);
\end{scope}

\end{tikzpicture}
\caption{\textbf{(a)} The map $f_\ell \colon P \to X$ when $d=2$, with colour-coding as in Figure \ref{fig:birds-eye-view}. \textbf{(b)} The map $f_\ell \circ \beta \colon P \to X$, where $\beta$ is the point-pushing diffeomorphism interchanging the punctures $p_i$ and $p_{i+1}$.}
\label{fig:2dim-case}
\end{figure}
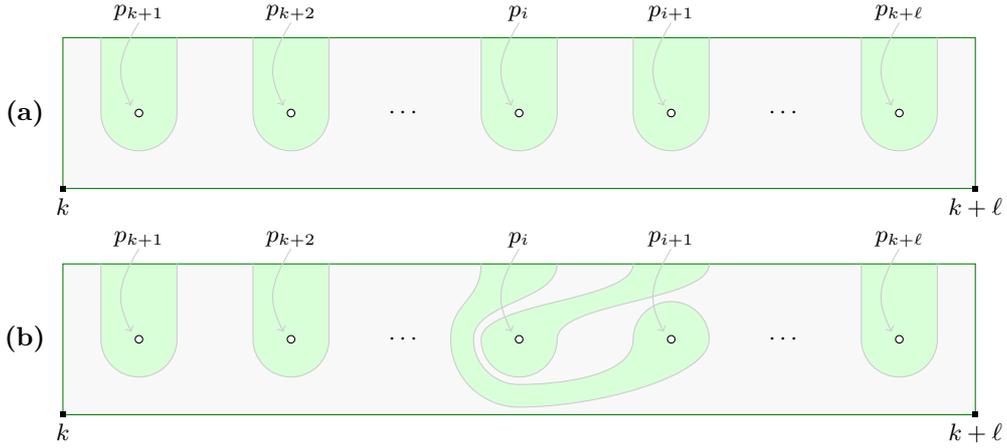

\begin{rmk}
The general setting of \cite{Krannich2019Homologicalstabilitytopological} is for $E_0$-modules over $E_2$-algebras; the theorem stated in the next section (Theorem \ref{t:ths-Krannich}) in terms of the category $\cC(M)$ is a rephrasing of this in a special case. If $d \geq 3$, the fact that the configuration-section spaces on $M$ form an $E_0$-module over an $E_{d-1}$-algebra, hence in particular over an $E_2$-algebra, \emph{automatically} implies that we have an extension to $\cC(M)$. On the other hand, when $d=2$, it is not tautological that we have an extension to $\cC(M)$, although it is still true, by Proposition \ref{p:extension-to-C}.
\end{rmk}

Under certain conditions, the monodromy functor \eqref{eq:extension-to-C} of Proposition \ref{p:extension-to-C} factors through the functor of braid categories $\cC(M) \to \cB_\sharp(M)$ (see Summary \ref{summary:braid-categories}).

\begin{prop}
\label{p:extension-to-Bsharp}
Let $\mathrm{dim}(M) \geq 3$ and assume that either $\pi_1(M)=0$ or that the handle-dimension of $M$ is at most $\mathrm{dim}(M) - 2$. Also assume that $\xi$ is the trivial bundle over $M$ with fibre $X$. Then the monodromy functor \eqref{eq:extension-to-C} factors through the functor of braid categories $\cC(M) \to \cB_\sharp(M)$.
\end{prop}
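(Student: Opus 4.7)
The plan is to split the factorisation into two independent subproblems: (i) factoring $\widetilde{\mathrm{Mon}}^{c,D}(M,\xi)$ through the full quotient $\cC(M) \twoheadrightarrow \cB(M)$, and (ii) extending the resulting functor $\cB(M) \to \hotop$ along the subcategory inclusion $\cB(M) \hookrightarrow \cB_\sharp(M)$. By Summary \ref{summary:braid-categories}, (i) is automatic when $M$ is simply-connected with $\dim M \geq 3$, since then $\cC(M) \to \cB(M)$ is already an isomorphism; the non-trivial case for (i) is thus only when $M$ has handle-dimension at most $\dim M - 2$ but $\pi_1(M)$ is possibly non-trivial.

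For step (i), I would identify the kernel of $\cC(M)(k,k+\ell) \twoheadrightarrow \cB(M)(k,k+\ell)$ with (classes of) loops in $\pi_1(\dot{C}_{k+\ell}(M))$ whose first $k$ strands are null-homotopic --- that is, loops supported on the last $\ell$ strands. On the image of $s_k^\ell \colon F(k) \to F(k+\ell)$ such a loop acts by pre-composition with a point-pushing diffeomorphism concentrated near the trajectories of the added points. Because $\xi$ is trivial and stabilised sections are locally standard around each added point (as in Figure \ref{fig:stabilisation-maps}), the claim is that this action is homotopic to the identity, by an argument analogous to the $v_k^\ell(\beta)$-analysis in the proof of Proposition \ref{p:extension-to-C}. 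The fine point-pushing computations --- in particular when the loops' trajectories wander throughout $\mathring{M} \setminus \{p_1,\ldots,p_k\}$, as they can when $\pi_1(M)$ is non-trivial --- are the content of the companion paper \cite{PalmerTillmann2020Pointpushingactions}; the handle-dimension hypothesis is precisely what allows one to reduce such arbitrary trajectories to ones confined to a standard neighbourhood of the added points.

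For step (ii), I would define a ``forget'' map $F(k+1) \to F(k)$ using the trivialisation of $\xi$: given $s \in F(k+1)$, restrict $s$ to $\hat{M}_k \smallsetminus z_k$ and re-homotope the induced boundary value on $D \times \{k\}$ back to $\mathrm{const}_{x_0}$ via a canonical homotopy inverse to the standard extension piece of Figure \ref{fig:stabilisation-maps}(b). The necessary compatibilities --- that ``forget $\circ\, s_k$'' is homotopic to the identity on $F(k)$, that the assignment is functorial on $\cB_\sharp(M)$, and that it agrees with the monodromy functor on morphisms already in $\cB(M)$ --- again reduce to point-pushing computations from \cite{PalmerTillmann2020Pointpushingactions}. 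The main obstacle overall is step (i) in the non-simply-connected low-handle-dimension case, which is also where the hypothesis that $\xi$ is trivial is essential: otherwise the monodromy action of $\pi_1(M)$ would couple non-trivially with the bundle data, obstructing the triviality of the action on stabilised sections.
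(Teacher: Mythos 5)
Your two-step decomposition (first factor through the full quotient $\cC(M)\twoheadrightarrow\cB(M)$, then extend along the faithful inclusion $\cB(M)\hookrightarrow\cB_\sharp(M)$) is a valid reformulation of the problem, and you correctly locate both where the hypotheses enter and where the hard content lives (the companion paper). The paper's route is different and more economical: it quotes the explicit formula for the monodromy action from \cite[Corollary 7.2]{PalmerTillmann2020Pointpushingactions} --- under the stated hypotheses $\pi_1(\dot{C}_k(M))\cong\pi_1(M)^k\rtimes\Sigma_k$ acts on the fibre $\mathrm{Map}_*(M,X)\times(\Omega_c^{d-1}X)^k$ by $(f,g_1,\dots,g_k)\mapsto(f,\bar{g}_1,\dots,\bar{g}_k)$ with $\bar{g}_i=f_*(\alpha_i).g_{\sigma(i)}.\mathrm{sgn}(\alpha_i)$ --- and then observes that this formula depends only on the $\pi_1(M)$-labelled partial injection underlying a braid, which is exactly a morphism of $\cB_\sharp(M)$. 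The extension (to $\cB_\sharp(M)^{\mathrm{op}}\cong\cB_\sharp(M)$) is then written down in one line, filling unmatched slots with a basepoint of $\Omega_c^{d-1}X$; this single formula handles your steps (i) and (ii) simultaneously and makes all the compatibility checks immediate.

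Where your proposal is thin: in step (i), the claim that the relevant elements act trivially on the image of $s_k^\ell$ up to homotopy does \emph{not} follow from an argument ``analogous to the $v_k^\ell(\beta)$-analysis'' in Proposition \ref{p:extension-to-C} --- that analysis only treats braids confined to the cylinder $D\times[k,k+\ell]$ (plus loops fixing the added strand), whereas here the extra strands may traverse all of $M$ and carry nontrivial elements of $\pi_1(M)$; this is precisely the computation of \cite[Corollary 7.2]{PalmerTillmann2020Pointpushingactions}, so you should cite that result specifically rather than a generic appeal to point-pushing. In step (ii), constructing the forget map is the easy part (it is the projection $\mathrm{Map}_*(M,X)\times(\Omega_c^{d-1}X)^{k+1}\to\mathrm{Map}_*(M,X)\times(\Omega_c^{d-1}X)^{k}$ under the fibre decomposition, not something that needs a bespoke re-homotoping of boundary values); the real work is functoriality over all of $\cB_\sharp(M)$, e.g.\ checking that forgetting a strand interacts correctly with a braid whose forgotten strand carries a nontrivial loop in $M$ while permuting the remaining strands. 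Without the explicit action formula in hand these verifications are not routine, so as written the proposal is a correct reduction to the companion paper rather than a complete proof.
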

\begin{proof}
The fibre $\mathrm{Map}^{c,D}(\hat{M}_k \smallsetminus z_k , X)$ of the fibration $\cmapd{k}{c,D}{M}{X} \to \dot{C}_k(M)$ decomposes up to homotopy as $\mathrm{Map}_*(M,X) \times (\Omega_c^{d-1}X)^k$, where $\Omega_c^{d-1}X$ denotes the union of path-components of the loopspace $\Omega^{d-1}X$ corresponding to the subset $c \subseteq [S^{d-1},X]$. Since $M$ has dimension at least $3$, the fundamental group $\pi_1(\dot{C}_k(M))$ decomposes as $\pi_1(M)^k \rtimes \Sigma_k$ (see for example \cite[Theorem~9]{FadellNeuwirth1962Configurationspaces} or \cite[Theorem~1]{Birman1969Onbraidgroups}). Under these identifications, and under the given assumptions on $M$, \cite[Corollary 7.2]{PalmerTillmann2020Pointpushingactions} implies that the monodromy action is given, for $\alpha_i \in \pi_1(M)$, $\sigma \in \Sigma_k$, $f \in \mathrm{Map}_*(M,X)$ and $g_i \in \Omega_c^{d-1}X$, by the following formula:
\begin{equation}
\label{eq:action-formula}
(\alpha_1,\ldots,\alpha_k;\sigma) \cdot (f,g_1,\ldots,g_k) = (f , \bar{g}_1 ,\ldots, \bar{g}_k),
\end{equation}
where $\bar{g}_i = f_*(\alpha_i) . g_{\sigma(i)} . \mathrm{sgn}(\alpha_i)$. The element $f_*(\alpha_i) \in \pi_1(X)$ acts on the point $g_{\sigma(i)} \in \Omega_c^{d-1}X$ via the usual action-up-to-homotopy of $\pi_1(X)$ on the iterated loopspaces of $X$. The sign $\mathrm{sgn}(\alpha_i) \in \{\pm 1\}$ depends on whether or not the loop $\alpha_i$ lifts to a loop in the orientation double cover of $M$, and $-1$ acts on $g_{\sigma(i)} \in \Omega_c^{d-1}X$ via a reflection of $S^{d-1}$.

The formula \eqref{eq:action-formula} may be visualised as follows (\cf Figure 7.1 of \cite{PalmerTillmann2020Pointpushingactions}). The element $(\alpha_1,\ldots,\alpha_k;\sigma)$ of $\pi_1(M)^k \rtimes \Sigma_k$ is a $k$-strand braid, where the strands may pass through each other, and where each strand is labelled by an element of $\pi_1(M)$. The element $(f,g_1,\ldots,g_k)$ is acted upon by pushing the $i$-th element $g_i$ backwards along the $i$-th strand of the braid while acting on it by $f_*(\phantom{-})$ of the label of that strand as well as the involution $\mathrm{sgn}(\phantom{-})$ of the label of the strand.

This description of the monodromy action immediately suggests how to extend the monodromy functor \eqref{eq:extension-to-C} to $\cB_\sharp(M)$, since the morphisms of $\cB_\sharp(M)$ have a similar combinatorial description when $M$ has dimension at least $3$. Namely, a morphism $m \to n$ in $\cB_\sharp(M)$ may be viewed as a braid (whose strands may cross each other) from a subset of $\{1,\ldots,m\}$ to a subset of $\{1,\ldots,n\}$, where each strand is labelled by an element of $\pi_1(M)$ (\cf \cite[Remark 5.10]{Krannich2019Homologicalstabilitytopological}). In fact, we will define an extension of the monodromy functor to $\cB_\sharp(M)^{\mathrm{op}}$, but this will finish the proof since $\cB_\sharp(M)$ is canonically isomorphic to its opposite category.

To define the extension of the monodromy functor to $\cB_\sharp(M)^{\mathrm{op}}$, we describe how a morphism $m \to n$ of $\cB_\sharp(M)$ acts on $(f,g_1,\ldots,g_n)$ for $f \in \mathrm{Map}_*(M,X)$ and $g_i \in \Omega_c^{d-1}X$. First, fix a basepoint $* \in \Omega_c^{d-1}X$. If there is a strand ending at position $i$, we push the $i$-th element $g_i$ backwards along this strand, acting on it by $f_*(\phantom{-})$ and $\mathrm{sgn}(\phantom{-})$ of the strand's label. We then fill in any blanks in the resulting partial $m$-tuple of elements of $\Omega_c^{d-1}X$ with the basepoint $*$.

In formulas, this is written as follows. A morphism $m \to n$ of $\cB_\sharp(M)$ is given by a partially-defined injective function $\sigma$ from a subset of $\{1,\ldots,m\}$ to a subset of $\{1,\ldots,n\}$ and an element $\alpha_i \in \pi_1(M)$ for each $i \in \mathrm{dom}(\sigma)$. This morphism acts by
\[
(f,g_1,\ldots,g_n) \longmapsto (f,\bar{g}_1,\ldots,\bar{g}_m),
\]
where $\bar{g}_i = f_*(\alpha_i).g_{\sigma(i)}.\mathrm{sgn}(\alpha_i)$ if $i \in \mathrm{dom}(\sigma)$ and $\bar{g}_i = *$ otherwise.
\end{proof}

\section{Polynomiality and stability}\label{s:polynomiality}

In this section we complete the proof of our main homological stability result for configuration-section spaces. First, we show that the composition of \eqref{eq:extension-to-C} with the functor $H_i(-;\bK) \colon \hotop \to \mathrm{Vect}_{\bK} \to \mathrm{Ab}$ is ``of degree $\leq i$'' for all $i\geq 0$ and all fields $\bK$. Via a result of \cite{Krannich2019Homologicalstabilitytopological} (recalled below as Theorem \ref{t:ths-Krannich}), this implies twisted homological stability for configuration spaces with coefficients in $H_i(\eqref{eq:extension-to-C};\bK)$, which then implies the desired result by a spectral sequence comparison argument.

\paragraph{Polynomial functors.}

The category $\cC(M)$ has a canonical endofunctor
\[
s \colon \cC(M) \too \cC(M)
\]
defined as follows. The maps $\sigma_k \colon \pi_1(\dot{C}_k(M)) \to \pi_1(\dot{C}_{k+1}(M))$ induced by the stabilisation maps (\cf Definition \ref{def:sigmak-and-vkl}) induce an endofunctor of $\mathrm{Br}(M)$ given by $k \mapsto k+1$ on objects. This endofunctor is compatible with the left-action of $\cB$ on $\mathrm{Br}(M)$, so it induces an endofunctor of $\cC(M) = \langle \cB , \mathrm{Br}(M) \rangle$, which we denote by $s$. There is moreover a natural transformation
\[
\iota \colon \mathrm{id}_{\cC(M)} \too s
\]
given by the morphisms $\iota_k = (1,\mathrm{id}_{k+1}) \colon k \to k+1$ of $\cC(M)$ for $k \in \bN$. (Recall that a morphism $a \to b$ in $\cC(M)$ is determined by an object $c$ of $\cB$ and a morphism $a+c \to b$ of $\mathrm{Br}(M)$.) We note that $s(\iota_k) = (1,v_k^2(\tau_1)) = v_k^2(\tau_1) \circ \iota_{k+1}$, where $\tau_1 \in B_2$ is the standard generator and $v_k^2 \colon B_2 \to \pi_1(\dot{C}_{k+2}(M))$ is the homomorphism defined in Definition \ref{def:sigmak-and-vkl}.

\begin{defn}
\label{d:degree}
For a category $\cC$ equipped with an endofunctor $s\colon \cC \to \cC$ and natural transformation $\iota \colon \mathrm{id}_\cC \to s$, and an abelian category $\cA$, the \emph{degree} of a functor $T \colon \cC \to \cA$ takes values in $\{-1\} \cup \bN \cup \{\infty\}$ and is defined recursively as follows. The only functor $T$ of degree $-1$ is $T=0$. If functors of degree $\leq d$ have been defined, we say that $T$ has degree $d+1$ if and only if the natural transformation $T\iota \colon T \to Ts$ is split injective in the functor category $\mathrm{Fun}(\cC,\cA)$ and the functor
\[
\Delta T = \mathrm{coker}(T\iota \colon T \to Ts) \colon \cC \to \cA
\]
has degree $d$. Once all functors of finite degree have been defined, all remaining functors are said to have degree $\infty$.
\end{defn}

\begin{rmk}
For $\cC = \cC(M)$, equipped with the endofunctor and natural transformation described above, this corresponds to the notion of ``\emph{split degree at $0$}'' of \cite[Definition 4.6]{Krannich2019Homologicalstabilitytopological}. There are an analogous endofunctor $s$ and natural transformation $\iota$ on the category $\cB_\sharp(M)$ (which commute with the functor $\cC(M) \to \cB_\sharp(M)$ from Summary \ref{summary:braid-categories}), and when $\cC = \cB_\sharp(M)$ equipped with these, the definition above corresponds to the \emph{degree} of \cite[Definition 3.1]{Palmer2018Twistedhomologicalstability}. In this setting, the degree of $T$ has an alternative characterisation in terms of \emph{cross-effects} of $T$ (Definition 3.15 and Lemma 3.16 of \cite{Palmer2018Twistedhomologicalstability}). See also \cite{Palmer2017comparisontwistedcoefficient} for a more general overview of notions of \emph{degree} of a functor via recursion (as above) or via cross-effects.
\end{rmk}

\begin{lem}
\label{lem:direct-sum-tensor-product}
Let $\cC$ be a category as in Definition \ref{d:degree}, $\cA$ an abelian category, $T_1$ and $T_2 \colon \cC \to \cA$ functors and $A \in \mathrm{ob}(\cA)$. Then we have
\begin{itemizeb}
\item $\mathrm{deg}(T_1 \oplus T_2) = \mathrm{max}\{ \mathrm{deg}(T_1) , \mathrm{deg}(T_2) \}$,
\item $\mathrm{deg}(T_1 \otimes A) \leq \mathrm{deg}(T_1)$, and, more generally,
\item $\mathrm{deg}(T_1 \otimes T_2) \leq \mathrm{deg}(T_1) + \mathrm{deg}(T_2)$ whenever $\mathrm{deg}(T_1)$ and $\mathrm{deg}(T_2)$ are non-negative,
\end{itemizeb}
where we assume that $(\cA,\otimes)$ is an abelian monoidal category for the second and third points.
\end{lem}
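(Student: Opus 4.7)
The plan is to prove each claim by induction on degree, using the recursive Definition \ref{d:degree}, where the central step is to compute $\Delta$ of each construction in terms of $\Delta T_1$ and $\Delta T_2$. The key structural observation is that if $T \colon \cC \to \cA$ has finite degree, then $T\iota$ is split injective, so there is an isomorphism $Ts \cong T \oplus \Delta T$ in $\mathrm{Fun}(\cC, \cA)$. Applying this to $T_1$ and $T_2$ and using that $\otimes$ is bi-additive in an abelian monoidal category, I would derive three Leibniz-type isomorphisms:
\[ \Delta(T_1 \oplus T_2) \;\cong\; \Delta T_1 \oplus \Delta T_2, \]
\[ \Delta(T_1 \otimes A) \;\cong\; (\Delta T_1) \otimes A, \]
\[ \Delta(T_1 \otimes T_2) \;\cong\; (T_1 \otimes \Delta T_2) \oplus (\Delta T_1 \otimes T_2) \oplus (\Delta T_1 \otimes \Delta T_2). \]
The third arises by expanding $T_1 s \otimes T_2 s \cong (T_1 \oplus \Delta T_1) \otimes (T_2 \oplus \Delta T_2)$ and quotienting by the $T_1 \otimes T_2$ summand. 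In each case the natural transformation $(\phantom{T})\iota$ on the composite functor is split injective: for direct sums it is a direct sum of splittings, and for tensor products it is a tensor product of splittings of $T_1 \iota$ and $T_2 \iota$.

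For the first point, I would induct on $n = \max\{\deg T_1, \deg T_2\}$: the base case $n = -1$ gives $T_1 = T_2 = 0$, and in the inductive step $\Delta T_1 \oplus \Delta T_2$ has maximum degree at most $n-1$ by induction, so $\deg(T_1 \oplus T_2) \leq n$. For the reverse inequality, I would first establish the auxiliary statement that degree is monotone under retracts: if $T$ is a retract of $S$ then $\deg T \leq \deg S$. This is itself proved by induction, noting that retract maps $T \to S \to T$ transport a splitting of $S\iota$ to one of $T\iota$ and that $\Delta T$ is a retract of $\Delta S$. Applying this to the canonical retract $T_i \hookrightarrow T_1 \oplus T_2 \twoheadrightarrow T_i$ yields $\deg T_i \leq \deg(T_1 \oplus T_2)$ for each $i$, completing the first point.

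For the second point, a straightforward induction on $\deg T_1$ using the second identity above suffices, with base case $\deg T_1 = -1$ (when $T_1 \otimes A = 0$). For the third point, I would induct on $\deg T_1 + \deg T_2$, with base case $\deg T_1 = \deg T_2 = 0$ giving $\Delta T_1 = \Delta T_2 = 0$ and hence $\Delta(T_1 \otimes T_2) = 0$; in the inductive step, each of the three summands of $\Delta(T_1 \otimes T_2)$ has total degree at most $\deg T_1 + \deg T_2 - 1$ by the inductive hypothesis, and the first point then bounds the degree of their direct sum.

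The main technical point is the Leibniz identity for $\Delta(T_1 \otimes T_2)$, which requires $\otimes$ to distribute over $\oplus$ on both sides; this is automatic once $\cA$ is abelian monoidal with bi-additive tensor product, so this hypothesis should be made explicit. A minor subtlety is the case where some $\deg T_i = -1$ (not covered by the hypothesis of the third point), where the identification $Ts \cong T \oplus \Delta T$ is degenerate, but the desired bound is trivial there because $T_1 \otimes T_2 = 0$.
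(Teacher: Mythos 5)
Your proposal is correct, and it follows the standard route: the paper's own proof is only a two-line citation to \cite[Lemma 3.18]{Palmer2018Twistedhomologicalstability}, and your argument — split injectivity giving $Ts \cong T \oplus \Delta T$, the three Leibniz-type identities for $\Delta$ of a direct sum and of tensor products, monotonicity of degree under retracts for the reverse inequality in the first point, and induction on degree throughout — is exactly the argument that citation stands for. You have in effect supplied in full the proof the paper delegates to the reference, including the correct handling of the degenerate degree $-1$ cases and the bi-additivity hypothesis on $\otimes$.
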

\begin{proof}
This is a direct generalisation of \cite[Lemma 3.18]{Palmer2018Twistedhomologicalstability}, whose proof also generalises directly.
\end{proof}

\begin{defn}
Let $\cC$ be as in Definition \ref{d:degree}. We say that a functor
\[
T \colon \cC \longrightarrow \mathrm{Ho}(\mathrm{Top})
\]
has \emph{slope $\leq\lambda$}, for $\lambda \in (0,\infty)$, if for every field $\bK$ and for each integer $i\geq 0$, the composite functor
\[
H_i(-;\bK) \circ T \colon \cC \longrightarrow \mathrm{Vect}_\bK
\]
has degree $\leq \lambda i$ in the sense of Definition \ref{d:degree}.
\end{defn}

Our main homological stability result is the following. Suppose that we have chosen a bundle over a manifold $\xi \colon E \to M$, a disc $D \subseteq \partial M$, a singularity condition $c \subseteq \pi_0(\Sigma(\xi))$, etc, as described in Notation \ref{notation-inputs}. There are quotient maps
\[
\pi_{d-1}(X) \longtwoheadrightarrow [S^{d-1},X] \cong \pi_0(\Sigma(\xi|_D)) \longtwoheadrightarrow \pi_0(\Sigma(\xi)).
\]
The left-hand quotient is given by the identification of $[S^{d-1},X]$ with the orbits $\pi_{d-1}(X)/\pi_1(X)$ of the natural $\pi_1(X)$-action on $\pi_{d-1}(X)$, and the right-hand quotient was explained in Definition \ref{d:cgammadot}. Taking pre-images, the singularity condition $c$ therefore determines subsets $c_D \subseteq [S^{d-1},X]$ and $\widetilde{c}_D \subseteq \pi_{d-1}(X)$.

\begin{thm}
\label{thm-hom-stab}
Suppose that the subset $\widetilde{c}_D \subseteq \pi_{d-1}(X)$ has size $1$. Then the stabilisation maps
\[
\cgamma{k}{c,D}{M}{\xi} \too \cgamma{k+1}{c,D}{M}{\xi}
\]
induce isomorphisms on integral homology up to degree $\tfrac{k}{2} - 2$ and surjections up to degree $\tfrac{k}{2} - 1$. With coefficients in a field, both of these ranges may be improved by one.
\end{thm}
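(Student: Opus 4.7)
The plan is to deduce Theorem \ref{thm-hom-stab} from twisted homological stability for ordinary configuration spaces, applied to the Serre spectral sequence of the Hurewicz fibration
\[
\Gamma_k^{c,D}(M;\xi) \too \cgamma{k}{c,D}{M}{\xi} \too \dot{C}_k(M) \simeq C_k(\mathring{M})
\]
of Corollary \ref{c:Hurewicz-fibration-sequence}. The stabilisation map \eqref{eq:cgamma-stabilisation} induces a map of Serre spectral sequences converging to a map on $H_*(\cgamma{k}{c,D}{M}{\xi})$, whose $E^2$-page is the homology of $C_k(\mathring{M})$ with coefficients in the composite functor $H_q(\widetilde{\mathrm{Mon}}^{c,D}(M,\xi)(-);\bK) \colon \cC(M) \to \mathrm{Ab}$ for each fixed $q \geq 0$. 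By the standard zig-zag lemma for spectral sequences, it suffices to show that each of these $E^2$-pages stabilises with slope $\leq 2$ (slope $2$ over $\bZ$ with the appropriate offsets, improved by one over a field).

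\textbf{Reduction to polynomiality.} Krannich's twisted homological stability theorem \cite{Krannich2019Homologicalstabilitytopological} for $E_0$-modules over $E_2$-algebras (applied here via Proposition \ref{p:e0-modules} combined with the extension Proposition \ref{p:extension-to-C}) provides precisely such a slope-$2$ stability range for $H_*(C_k(\mathring{M}); T)$ with $T \colon \cC(M) \to \mathrm{Ab}$ of finite degree, with the slope $2$ being correct under the hypothesis that $T$ has degree $\leq q$ in the sense of Definition \ref{d:degree}. Hence everything reduces to establishing the following key proposition (which we plan to formulate and prove separately as Proposition \ref{prop:polynomiality}): for each $q \geq 0$ and each field $\bK$, the functor
\[
k \longmapsto H_q\bigl( \Gamma_k^{c,D}(M;\xi) ; \bK \bigr), \qquad \cC(M) \too \mathrm{Vect}_\bK,
\]
has degree $\leq q$.

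\textbf{Proof of polynomiality.} To prove the polynomiality proposition, the plan is to analyse the fibre $\Gamma_k^{c,D}(M;\xi)$ by peeling off one puncture at a time. Restricting a section over $\hat{M}_k \smallsetminus z_k$ to a small punctured disc around $p_k$ and using that $\xi$ is trivialisable on such a disc, one obtains (essentially by the parametrised fibration lemma applied to restriction maps of section spaces) a homotopy fibration
\[
\Omega_c^{d-1} X \too \Gamma_k^{c,D}(M;\xi) \too \Gamma_{k-1}^{c,D}(M;\xi),
\]
where $\Omega_c^{d-1}X$ denotes the union of path-components of the iterated loop space corresponding to $c_D \subseteq [S^{d-1},X]$. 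The hypothesis that the pre-image $\widetilde{c}_D \subseteq \pi_{d-1}(X)$ is a single element fixed by the $\pi_1(X)$-action means that $\Omega_c^{d-1}X$ is a \emph{single} path-component, and in particular has a $\pi_1(X)$-equivariantly trivialisable monodromy on its homology. Passing to the Serre spectral sequence with field coefficients (Künneth) and iterating, this shows that $H_*(\Gamma_k^{c,D}(M;\xi);\bK)$ admits a filtration whose associated graded is a direct sum of tensor products of $H_*(\Gamma_0^{c,D}(M;\xi);\bK)$ with symmetric tensor powers of $H_*(\Omega_c^{d-1}X;\bK)$ (shifted appropriately). Combined with the explicit monodromy action computed in the proof of Proposition \ref{p:extension-to-C} (which, on $\pi_0$ of the fibre, acts by permuting the $k$ factors), and appealing to Lemma \ref{lem:direct-sum-tensor-product}, the degree of $H_q$ in $k$ is bounded by $q$.

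\textbf{Main obstacle.} The delicate step is checking that the monodromy action on the filtration (and the associated Serre spectral sequence of each peeling-off fibration) really does act as expected — namely, by the permutation-and-loop-space-conjugation action of the braid-category morphisms of $\cC(M)$ — even when $\xi$ is a non-trivial bundle and $M$ has non-trivial $\pi_1$. The key point to verify is that although the action on $\pi_{d-1}(X)$ of each strand is a priori twisted both by $\pi_1(X)$ (via the classifying map of the bundle) and by the sign $\mathrm{sgn}$ coming from non-orientability, the hypothesis \eqref{eq:charge-condition} ensures every such twist fixes the relevant homotopy class and hence induces the identity on $\pi_0(\Omega_c^{d-1}X)$. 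Once this is in place, the polynomial-degree bookkeeping and the subsequent spectral sequence comparison are routine, with the one-step improvement in the stability range over a field being an immediate consequence of the Künneth splitting available only over fields.
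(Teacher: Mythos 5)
Your overall architecture --- the Serre spectral sequence of the fibration of Corollary \ref{c:Hurewicz-fibration-sequence}, Krannich's twisted stability theorem applied via the extension of the monodromy functor to $\cC(M)$, and the reduction to a degree-$\leq q$ bound for $H_q$ of that functor --- is exactly the paper's proof. Where you diverge is the polynomiality step: the paper does not peel off punctures via iterated fibrations, but constructs an explicit homotopy equivalence $e_k \colon Y \times Z^k \to \Gamma_k^{c,D}(M,\xi)$ (with $Y = \Gamma^D(M,\xi)$ and $Z$ the single path-component of $\Omega^{d-1}X$ determined by $\widetilde{c}_D$), checks that under $e_k$ the stabilised automorphisms act as $(\text{old action}) \times \mathrm{id}_Z$ and the stabilisation as $(\text{old stabilisation}) \times c_Z$ with $c_Z$ invertible in $\hotop$, and then applies the \emph{natural} K{\"u}nneth isomorphism. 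Your route can be made to work, but beware that Definition \ref{d:degree} demands that $F_i\iota$ be \emph{split} injective in the functor category and that $\Delta F_i$ be identified \emph{as a functor} of lower degree: a filtration whose associated graded is a sum of tensor products is not enough on its own, since the splitting must be natural for all morphisms of $\cC(M)$. You need the peeling-off fibration to be trivial (it is --- this is what $e_k$ makes explicit) and the K{\"u}nneth splitting to be natural, which holds only over a field. Also, you do not need the explicit monodromy formula of Proposition \ref{p:extension-to-Bsharp} (which anyway requires $\dim M \geq 3$, trivial $\xi$, and conditions on $\pi_1(M)$); the only inputs needed are that the stabilised automorphisms and the stabilisation act trivially, respectively by an invertible map, on the new $Z$-factor.

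The genuine gap is that your proposal never actually produces the integral statement. Your key proposition is (correctly) stated only for field coefficients, because over $\bZ$ the K{\"u}nneth splitting is not natural and the identification of $\Delta F_i$ fails; hence Krannich's theorem only yields stability for $H_*(-;\bK)$ with $\bK$ a field. The integral ranges in the theorem (which are one degree worse) must then be deduced from the $\bF_p$- and $\bQ$-coefficient cases via the coefficient exact sequences $0 \to \bZ/p^r \to \bZ/p^{r+1} \to \bZ/p \to 0$ and $0 \to \bZ \to \bQ \to \bigoplus_p \bZ(p^\infty) \to 0$; this Bockstein-type step is exactly where the loss of one degree occurs. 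Your parenthetical ``slope $2$ over $\bZ$ with the appropriate offsets, improved by one over a field \dots\ an immediate consequence of the K{\"u}nneth splitting'' has the logic backwards: the field case is the one proved directly, and the integral case is the corollary obtained by this extra (and currently missing) argument.
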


This is Theorem \ref{tmain} of the introduction. The additional claim about improving the slope when $2$ is invertible is contained in the following:

\begin{thm}
\label{thm-hom-stab-improved}
Suppose that the subset $\widetilde{c}_D \subseteq \pi_{d-1}(X)$ has size $1$. Suppose also that $\mathrm{dim}(M) \geq 3$ and that either $\pi_1(M) = 0$ or the handle-dimension of $M$ is at most $\mathrm{dim}(M)-2$. Also suppose that $\xi$ is a trivial bundle. Then the stabilisation maps
\[
\cgamma{k}{c,D}{M}{\xi} \too \cgamma{k+1}{c,D}{M}{\xi}
\]
induce isomorphisms on homology with $\bZ[\tfrac12]$ coefficients up to degree $k-2$ and surjections up to degree $k-1$. With coefficients in a field of characteristic $\neq 2$, both of these ranges may be improved by one.
\end{thm}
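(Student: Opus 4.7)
The strategy is to run the same argument as for Theorem \ref{thm-hom-stab}, but with $\cC(M)$ replaced by the larger category $\cB_\sharp(M)$ of Summary \ref{summary:braid-categories}, in which polynomial coefficient systems enjoy a stability range of slope $1$ instead of slope $2$. Concretely, under the hypotheses of the theorem, Proposition \ref{p:extension-to-Bsharp} applies and the monodromy functor $\smash{\widetilde{\mathrm{Mon}}}^{c,D}(M,\xi) \colon \cC(M) \to \hotop$ of Proposition \ref{p:extension-to-C} factors through the canonical functor $\cC(M) \to \cB_\sharp(M)$. Denote the resulting functor on $\cB_\sharp(M)$ by the same symbol by abuse of notation.

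Next, fix a field $\bK$ of characteristic $\neq 2$ and an integer $i \geq 0$. The key technical step is to show that the composite
\[
H_i(-;\bK) \circ \smash{\widetilde{\mathrm{Mon}}}^{c,D}(M,\xi) \colon \cB_\sharp(M) \longrightarrow \mathrm{Vect}_\bK
\]
has degree $\leq i$ in the sense of Definition \ref{d:degree} (with respect to the endofunctor and natural transformation on $\cB_\sharp(M)$ described just before Definition \ref{d:degree} and in \cite{Palmer2018Twistedhomologicalstability}). By the formula \eqref{eq:action-formula} and the assumption $\widetilde c_D = \{*\}$, the fibre decomposes up to weak equivalence as $\mathrm{Map}_*(M,X) \times (\Omega_c^{d-1}X)^k$, and the factored action of $\cB_\sharp(M)$ pushes copies of $\Omega_c^{d-1}X$ along strands while twisting by $f_*(\alpha_i) \in \pi_1(X)$ and by the sign involution $\mathrm{sgn}(\alpha_i) \in \{\pm 1\}$. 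Since $\bK$ is a field, the Künneth theorem together with Lemma \ref{lem:direct-sum-tensor-product} reduces polynomiality of degree $i$ to polynomiality of degree $1$ of the tensor factor coming from a single puncture, which is exactly the place where invertibility of $2$ is needed: the $\bZ/2$-action of $\mathrm{sgn}$ on $H_*(\Omega_c^{d-1}X;\bK)$ must be split into $(\pm 1)$-eigenspaces so that the relevant $\cB_\sharp(M)$-functor is a direct sum of functors pulled back from the symmetric structure.

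With this polynomiality established, the twisted homological stability theorem of \cite{Palmer2018Twistedhomologicalstability} for $C_k(\mathring M)$ with polynomial coefficients on $\cB_\sharp(M)$ provides twisted homological stability with slope $1$ for $H_j(C_k(\mathring M) ; H_i(-;\bK) \circ \smash{\widetilde{\mathrm{Mon}}}^{c,D}(M,\xi))$. One then feeds this into the Serre spectral sequence of the Hurewicz fibration sequence \eqref{eq:Hurewicz-fibration-sequence}, exactly as in the proof of Theorem \ref{thm-hom-stab}, to deduce the stability statement for $\cgamma{k}{c,D}{M}{\xi}$ with coefficients in $\bK$, with iso range $k \geq i+1$ and surjection range $k \geq i$. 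The upgrade from arbitrary fields of odd characteristic (and $\bQ$) to $\bZ[\tfrac12]$-coefficients (with the +1 shift in the range) follows by a standard Bockstein / universal coefficients argument: it suffices to check it after tensoring with $\bQ$ and with $\bF_p$ for all odd primes $p$, which the field case provides.

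The main obstacle is the middle step: verifying that the composition $H_i \circ \smash{\widetilde{\mathrm{Mon}}}^{c,D}(M,\xi)$ has degree $\leq i$ as a functor on $\cB_\sharp(M)$, including the careful bookkeeping of the $\mathrm{sgn}$-action that explains why $2$ must be invertible. Everything else (Proposition \ref{p:extension-to-Bsharp}, the spectral sequence comparison, Bockstein) is either imported or runs in direct analogy with the proof of Theorem \ref{thm-hom-stab}.
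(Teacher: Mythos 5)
Your overall architecture matches the paper's: factor the monodromy functor through $\cC(M) \to \cB_\sharp(M)$ via Proposition \ref{p:extension-to-Bsharp}, invoke the twisted stability machinery of \cite{Palmer2018Twistedhomologicalstability} on $\cB_\sharp(M)$, feed the result into the Serre spectral sequence of \eqref{eq:Hurewicz-fibration-sequence}, and pass from fields of odd characteristic to $\bZ[\tfrac12]$ at the end. However, there is a genuine gap in the middle step, and you have misidentified where the hypothesis $\mathrm{char}(\bK) \neq 2$ actually does its work. Polynomiality of $H_i(-;\bK)\circ\smash{\widetilde{\mathrm{Mon}}}^{c,D}(M,\xi)$ of degree $\leq i$ holds over \emph{every} field, exactly as in Proposition \ref{prop:polynomiality} -- the argument there via the decomposition $\Gamma_k^{c,D}(M,\xi) \simeq Y \times Z^k$ and the K{\"u}nneth theorem never touches the characteristic, and no eigenspace splitting of the $\mathrm{sgn}$-involution is needed (or would help). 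More importantly, polynomiality alone does not buy you a slope-$1$ range: the generic conclusion from a degree-$\leq i$ coefficient system is the slope-$2$ range of Theorem \ref{t:ths-Krannich}, whether one works over $\cC(M)$ or $\cB_\sharp(M)$.

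The actual mechanism is that \cite[Theorem A]{Palmer2018Twistedhomologicalstability} \emph{transfers} the slope of \emph{untwisted} homological stability for $C_k(M')$ (for $M'$ equal to $M$ minus finitely many points) to twisted stability for any polynomial coefficient system on $\cB_\sharp(M)$, with the \emph{same} slope. The missing external input in your proposal is therefore \cite[Theorem 1.4]{KupersMiller2015}: for $\mathrm{dim}(M)\geq 3$ and a field $\bK$ of characteristic $\neq 2$ (or $\bZ[\tfrac12]$ coefficients), the untwisted stability of configuration spaces holds with slope $1$. This is where both $\mathrm{char}(\bK)\neq 2$ and (for a second time) $\mathrm{dim}(M)\geq 3$ enter; without it, your argument only reproduces the slope-$2$ range of Theorem \ref{thm-hom-stab}. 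The final Bockstein/coefficient-sequence step you describe is fine and matches the paper (using $\bQ/\bZ[\tfrac12]\cong\bigoplus_{p\neq 2}\bZ(p^\infty)$).
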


The main technical input for these theorems is the following.

\begin{prop}
\label{prop:polynomiality}
If $\lvert \widetilde{c}_D \rvert = 1$, the functor $\eqref{eq:extension-to-C} = \smash{\widetilde{\mathrm{Mon}}}^{c,D}(M,\xi)$ has slope $\leq 1$.
\end{prop}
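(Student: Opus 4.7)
The plan is to show that for every field $\bK$ and every $i \geq 0$, the composite functor $H_i(\widetilde{\mathrm{Mon}}^{c,D}(M,\xi);\bK)\colon \cC(M) \to \mathrm{Vect}_\bK$ has degree $\leq i$. I will do this by identifying, up to weak equivalence, the fibre $\Gamma_k^{c,D}(M;\xi)$ as the total space of a Serre fibration
\[
(\Omega_{c_D}^{d-1}X)^k \too \Gamma_k^{c,D}(M;\xi) \too B,
\]
whose base $B$ is independent of $k$, and then combining the associated Serre spectral sequence with a Künneth computation.

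First I would define a restriction map $\rho_k\colon \Gamma_k^{c,D}(M;\xi) \to \Gamma(M;\xi)$ by sending a section $s$ of $\hat\xi$ over $\hat{M}_k \smallsetminus z_k$ to its restriction to the submanifold $M \subseteq \hat{M}_k$. This is a Serre fibration, and its fibre over $t\in \Gamma(M;\xi)$ is the space of extensions of $t|_D$ to a section of $\hat\xi$ over the punctured collar $(D\times [0,k])\smallsetminus z_k$, matching $s_D$ on $D\times\{k\}$ and satisfying the singularity condition $c_D$ at each puncture. Since $\hat\xi$ is trivial over the collar with fibre $X$, this fibre is a based mapping space; collapsing the contractible end $D\times\{k\}$ and deformation-retracting onto a wedge of $k$ copies of $S^{d-1}$ identifies it, up to weak equivalence, with $(\Omega_{c_D}^{d-1}X)^k$. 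The hypothesis $|\widetilde{c}_D|=1$ ensures that $\Omega_{c_D}^{d-1}X$ is a single path-component of $\Omega^{d-1}X$.

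Applying the Serre spectral sequence to $\rho_k$ then yields
\[
E_2^{p,q} = H_p\bigl(B;\, \cH_q\bigr) \Longrightarrow H_{p+q}\bigl(\Gamma_k^{c,D}(M;\xi);\bK\bigr),
\]
where $\cH_q$ is the local system on $B$ with fibres $H_q((\Omega_{c_D}^{d-1}X)^k;\bK)$. By Künneth, the latter is $\bigoplus_{q_1+\cdots+q_k=q}\bigotimes_i H_{q_i}(\Omega_{c_D}^{d-1}X;\bK)$, in which at most $q$ of the indices $q_i$ are positive. A standard analysis of such graded tensor-power constructions on the braid-like categories $\cB(M)$ and $\cB_\sharp(M)$ (compare \cite{Randal-WilliamsWahl2017} and \cite{Palmer2018Twistedhomologicalstability}) shows that, as a functor of $k\in \cC(M)$, this tensor power has degree $\leq q$. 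The hypothesis $|\widetilde{c}_D|=1$ is essential here: it guarantees that $c_D$ corresponds to a class fixed by the $\pi_1(X)$-action (and, in the $2$-dimensional or non-orientable case, also under the reflection involution), so the point-pushing monodromy preserves the component $\Omega_{c_D}^{d-1}X \subseteq \Omega^{d-1}X$ and the Künneth decomposition is $\cC(M)$-equivariant. Because $B$ does not depend on $k$, Lemma \ref{lem:direct-sum-tensor-product} then gives $\deg(E_2^{p,q})\leq q$ in $k$; on the diagonal $p+q=i$ the bound is $\leq i$, is preserved by subquotients and extensions (again by Lemma \ref{lem:direct-sum-tensor-product}), and so $H_i(\Gamma_k^{c,D}(M;\xi);\bK)$ has degree $\leq i$ as required.

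The main obstacle is verifying that the fibration $\rho_k$ and the resulting spectral sequence are genuinely $\cC(M)$-functorial in a homotopy-coherent sense. Both the stabilisation maps and the more general morphisms of $\cC(M)$ provided by Proposition \ref{p:extension-to-C} must respect, up to homotopy, the product decomposition of the fibre as $(\Omega_{c_D}^{d-1}X)^k$ and induce the identity (up to homotopy) on the base $B$. Making this precise requires tracking the point-pushing monodromy across all the constructions, in the spirit of the proof of Proposition \ref{p:extension-to-Bsharp}, and is precisely where the hypothesis $|\widetilde{c}_D|=1$ enters to keep the component decomposition stable under all the relevant actions.
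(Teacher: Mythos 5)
Your overall idea -- separate a $k$-independent piece from $k$ copies of the loop-space component $Z = \Omega^{d-1}_{c_D}X$ and induct on homological degree via K{\"u}nneth -- is the right one, and is essentially what the paper does. But your execution via the Serre spectral sequence of the restriction fibration $\rho_k$ has a genuine gap at the step ``the bound \ldots is preserved by subquotients and extensions (again by Lemma \ref{lem:direct-sum-tensor-product})''. Lemma \ref{lem:direct-sum-tensor-product} says nothing about subquotients or extensions; it only treats direct sums and tensor products. The degree of Definition \ref{d:degree} is a \emph{split} polynomial degree: it requires the natural transformation $T\iota \colon T \to Ts$ to be split injective in the functor category, and this property does not pass to subquotients or extensions (a splitting of $T\iota$ need not restrict to a subfunctor of $T$ nor descend to a quotient). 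So knowing that each $E_2$-entry of your spectral sequence has degree $\leq q$ does not let you conclude anything about the degree of the abutment. The paper avoids this entirely by showing that the ``fibration'' is trivial up to homotopy: it constructs an explicit homotopy equivalence $e_k \colon Y \times Z^k \to \Gamma_k^{c,D}(M,\xi)$ with $Y = \Gamma^D(M,\xi)$ (note that building the boundary condition on $D$ into $Y$ also removes the extra $\Omega X$-type factor that your fibre over $t \in \Gamma(M;\xi)$ actually carries, since your sections are constrained on \emph{both} ends of the collar). It then upgrades this to a natural isomorphism of functors $\smash{\widetilde{\mathrm{Mon}}}^{c,D}(M,\xi) \circ s \cong \smash{\widetilde{\mathrm{Mon}}}^{c,D}(M,\xi) \times \mathrm{C}(c_Z)$, so that the K{\"u}nneth theorem with field coefficients -- which is naturally split, with no Tor terms -- gives a direct-sum decomposition $F_i \circ s \cong \bigoplus_{j=0}^i F_{i-j} \otimes H_j(\mathrm{C}(c_Z);\bK)$ of functors with $F_i\iota$ the inclusion of the $j=0$ summand. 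Split-injectivity of $F_i\iota$ and $\deg(\Delta F_i) \leq i-1$ are then read off directly and the induction closes using only the $\oplus$ and $\otimes$ clauses of Lemma \ref{lem:direct-sum-tensor-product}.

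The second problem is your appeal to ``a standard analysis of graded tensor-power constructions on $\cB(M)$ and $\cB_\sharp(M)$''. The monodromy functor only factors through $\cB_\sharp(M)$ under the restrictive hypotheses of Proposition \ref{p:extension-to-Bsharp} ($\dim M \geq 3$, simply-connected or small handle dimension, trivial bundle); the cross-effect machinery of \cite{Palmer2018Twistedhomologicalstability} is therefore unavailable in the generality of the proposition -- in particular for $d=2$, which is the Hurwitz-space case. One must instead verify by hand how the generators of $\cC(M)$ act under the product decomposition: the automorphisms act as $\smash{\widetilde{\mathrm{Mon}}}(\alpha) \times \mathrm{id}_Z$, while $s(\iota_k)$ acts as $\smash{\widetilde{\mathrm{Mon}}}(\iota_k) \times c_Z$, where $c_Z$ is the identity for $d \geq 3$ but is conjugation by the basepoint loop when $d=2$; the argument then only needs $c_Z$ to be a homotopy \emph{automorphism}, which forces $\deg(H_j(\mathrm{C}(c_Z);\bK)) \leq 0$. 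You correctly identify this functoriality as ``the main obstacle'' and correctly locate where $\lvert \widetilde{c}_D \rvert = 1$ enters, but the obstacle is left unresolved, and resolving it is the substance of the proof.
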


We will also use part of Theorem D of \cite{Krannich2019Homologicalstabilitytopological}, which we recall in the following:

\begin{thm}[{\cite[part of Theorem D]{Krannich2019Homologicalstabilitytopological}}]
\label{t:ths-Krannich}
Let $(M,D,b,*)$ be as in Proposition \ref{p:extension-criterion} and let
\[
G \colon \cC(M) \too \mathrm{Ab}
\]
be a functor to the category of abelian groups. If $\mathrm{deg}(G) \leq r$, then the maps
\[
H_i(\dot{C}_k(M);G(k)) \too H_i(\dot{C}_{k+1}(M);G(k+1)),
\]
induced by the stabilisation maps \eqref{eq:cgamma-stabilisation} together with the functor $G$, are isomorphisms in the range of degrees $2i \leq k-r-2$ and surjections in the range of degrees $2i \leq k-r$.
\end{thm}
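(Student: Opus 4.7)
The plan is to adapt the Randal-Williams--Wahl framework for twisted homological stability to the braided setting of $\cC(M)$, using a semi-simplicial resolution combined with an induction on the polynomial degree $r$. For each $k \geq 1$, construct an augmented semi-simplicial space $W_\bullet(k) \to \dot{C}_k(M)$ whose $p$-simplices are triples $\bigl((z,t), I_0, \ldots, I_p, \alpha_0, \ldots, \alpha_p\bigr)$, where $(z,t)\in\dot{C}_k(M)$, the $I_j$ are disjoint sub-intervals of $D \times \{t\} \subseteq \partial\hat{M}_t$, and the $\alpha_j$ are disjoint embedded arcs in $\hat{M}_t$ joining $I_j$ to distinct points of $z$. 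The augmentation forgets the arcs. The key geometric input is that $\lvert W_\bullet(k)\rvert$ is $\bigl\lfloor\tfrac{k-2}{2}\bigr\rfloor$-connected: for $\mathrm{dim}(M)\geq 3$ this follows by standard general-position / isotopy-extension arguments; for $\mathrm{dim}(M)=2$ one invokes the Hatcher--Wahl connectivity estimates for arc complexes on surfaces. The connectivity slope $\tfrac{1}{2}$ here is precisely the source of the range $2i \leq k - r - 2$ in the conclusion.

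Using $W_\bullet(k)$ and the skeletal filtration of its realisation, build a first-quadrant spectral sequence converging to zero in total degrees $\leq \bigl\lfloor\tfrac{k-2}{2}\bigr\rfloor$ whose $E^1$-page has the form
\[
E^1_{p,q}(k) \;=\; H_q\bigl(\dot{C}_{k-p-1}(M)\,;\,G(k)\bigr),
\]
where $G(k)$ is viewed as a $\pi_1(\dot{C}_{k-p-1}(M))$-module via restriction along a canonical morphism $k-p-1 \to k$ in $\cC(M)$ coming from the action of $(p+1)$ arcs. The stabilisation map $\dot{C}_k(M) \to \dot{C}_{k+1}(M)$ from \S\ref{s:extension}, together with the natural transformation $\iota \colon \mathrm{id} \to s$, lifts to a map of semi-simplicial resolutions $W_\bullet(k) \to W_\bullet(k+1)$, hence to a morphism of spectral sequences $E^r_{*,*}(k) \to E^r_{*,*}(k+1)$.

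The argument now proceeds by induction on $r = \mathrm{deg}(G)$. The base case $r=-1$ is vacuous since $G=0$. For the inductive step, use Definition \ref{d:degree}: the natural transformation $G\iota$ is split injective with cokernel $\Delta G$ of degree $\leq r-1$, giving a short exact sequence $0 \to G \to Gs \to \Delta G \to 0$ of functors on $\cC(M)$. Evaluating at $k$ and taking twisted homology yields a long exact sequence, and the inductive hypothesis applied to $\Delta G$ controls the error term. The five lemma, together with the comparison of spectral sequences and the observation that each $E^1$-column at level $k$ maps to the corresponding column at level $k+1$ by a map that is a stability map for a coefficient system of degree $\leq r$ on a smaller configuration space (to which the inductive hypothesis applies, once suitably reorganised), closes the induction.

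The main obstacle is establishing the sharp connectivity of $\lvert W_\bullet(k) \rvert$ uniformly across all connected manifolds with non-empty boundary and all dimensions $d \geq 2$; the $d=2$ case is the delicate one and is the reason one is ultimately limited to slope $\tfrac{1}{2}$ in the stability range. Everything else is formal bookkeeping with spectral sequences and the functoriality of $G$ on $\cC(M)$.
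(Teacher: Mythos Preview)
The paper does not prove this statement at all: Theorem \ref{t:ths-Krannich} is simply quoted from \cite{Krannich2019Homologicalstabilitytopological} (as ``part of Theorem D'') and used as a black box in the proof of Theorem \ref{thm-hom-stab}. So there is no proof in the paper for your proposal to be compared against.

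That said, your sketch is a reasonable outline of how Krannich's Theorem D is actually established in the cited reference, specialised to the case where the $E_1$-module is $\dot{C}(M)$. Krannich works more abstractly with a canonical semi-simplicial resolution built from the $E_1$-module structure over an $E_2$-algebra (rather than literal arc complexes), but in the configuration-space case this unwinds to essentially the arc resolution you describe, and the connectivity input is indeed the arc-complex connectivity (Hatcher--Wahl in dimension $2$, general position in higher dimensions). One small correction: the $E^1$-page in Krannich's setup is more naturally written with the coefficient system shifted, i.e.\ involving $G \circ s^{p+1}$ evaluated at $k-p-1$, rather than $G(k)$ restricted along a morphism; the distinction matters for tracking how the induction on $\mathrm{deg}(G)$ interacts with the differentials. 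The inductive step also requires a bit more care than ``the five lemma closes the induction'': one must verify that the relative groups satisfy a cancellation property and that the spectral sequence comparison genuinely propagates the range, which is the content of Krannich's \S 3--4. But the skeleton of the argument is as you say.
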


\begin{proof}[Proof of Theorem \ref{thm-hom-stab} (homological stability for configuration-section spaces).]
From the fibration sequences \eqref{eq:Hurewicz-fibration-sequence} and the stabilisation maps \eqref{eq:cgamma-stabilisation}, we have a map of fibration sequences of the form:
\begin{equation}
\label{eq:map-of-key-fibrations}
\centering
\begin{split}
\begin{tikzpicture}
[x=1mm,y=1mm]
\node (tl) at (0,30) {$\Gamma_k^{c,D}(M,\xi)$};
\node (tr) at (40,30) {$\Gamma_{k+1}^{c,D}(M,\xi)$};
\node (ml) at (0,15) {$\cgammad{k}{c,D}{M}{\xi}$};
\node (mr) at (40,15) {$\cgammad{k+1}{c,D}{M}{\xi}$};
\node (bl) at (0,0) {$\dot{C}_k(M)$};
\node (br) at (40,0) {$\dot{C}_{k+1}(M),$};
\draw[->] (tl) to (tr);
\draw[->] (ml) to (mr);
\draw[->] (bl) to (br);
\draw[->] (tl) to (ml);
\draw[->] (ml) to (bl);
\draw[->] (tr) to (mr);
\draw[->] (mr) to (br);
\end{tikzpicture}
\end{split}
\end{equation}
which has an associated map of Serre spectral sequences
\begin{equation}
\label{eq:map-of-SSS}
\centering
\begin{split}
\begin{tikzpicture}
[x=1mm,y=1mm]
\node (tl) at (0,15) {$H_p(\dot{C}_k(M) ; H_q( \Gamma_k^{c,D}(M,\xi) ; \bK ))$};
\node (tr) at (70,15) {$H_p(\dot{C}_{k+1}(M) ; H_q( \Gamma_{k+1}^{c,D}(M,\xi) ; \bK ))$};
\node (bl) at (0,0) {$H_{p+q}(\cgammad{k}{c,D}{M}{\xi} ; \bK)$};
\node (br) at (70,0) {$H_{p+q}(\cgammad{k+1}{c,D}{M}{\xi} ; \bK).$};
\node at (0,7.5) {\rotatebox{270}{$\Rightarrow$}};
\node at (70,7.5) {\rotatebox{270}{$\Rightarrow$}};
\draw[->] (tl) to (tr);
\draw[->] (bl) to (br);
\end{tikzpicture}
\end{split}
\end{equation}
By Proposition \ref{prop:polynomiality}, for any field $\bK$, the functor $H_q(-;\bK) \circ \smash{\widetilde{\mathrm{Mon}}}^{c,D}(M,\xi)$ has degree $\leq q$ for each $q \geq 0$, and hence Theorem \ref{t:ths-Krannich} implies that \eqref{eq:map-of-SSS} is an isomorphism on $E^2$ pages in the range of bidegrees $2p \leq k-q-2$, and a surjection for $2p \leq k-q$. In particular, it is an isomorphism for total degree $p+q \leq \tfrac{k}{2} - 1$ and a surjection for $p+q \leq \tfrac{k}{2}$. By a spectral sequence comparison argument (see \cite[Theorem 1]{Zeeman1957proofofcomparison} or \cite[Remarque 2.10]{CollinetDjamentGriffin2013Stabilitehomologiquepour}), the same statements hold also in the limit. Composing with the homotopy equivalences of Lemma \ref{l:equivalent-models}, we conclude that the stabilisation maps
\begin{equation}
\label{eq:stab-maps-with-identifications}
\cgamma{k}{c,D}{M}{\xi} \simeq \cgammad{k}{c,D}{M}{\xi} \too \cgammad{k+1}{c,D}{M}{\xi} \simeq \cgamma{k+1}{c,D}{M}{\xi}
\end{equation}
induce isomorphisms on $\bK$-homology up to degree $\tfrac{k}{2}-1$ and surjections up to degree $\tfrac{k}{2}$. Applying this for $\bK = \bF_p$ and using the maps of long exact sequences induced by the short exact sequences of coefficients $0 \to \bZ/p^r \to \bZ/p^{r+1} \to \bZ/p \to 0$, we deduce the same statements for homology with coefficients in $\bZ(p^\infty)$, where $\bZ(p^\infty)$ is the direct limit of $\bZ/p \to \bZ/p^2 \to \bZ/p^3 \to \cdots$. Then using the short exact sequence of coefficients
\[
0 \to \bZ \too \bQ \too \bQ/\bZ = \bigoplus_p \bZ(p^\infty) \to 0,
\]
we conclude that the stabilisation maps \eqref{eq:stab-maps-with-identifications} induce isomorphisms on integral homology up to degree $\tfrac{k}{2}-2$ and surjections up to degree $\tfrac{k}{2}-1$.
\end{proof}

\begin{proof}[Proof of Theorem \ref{thm-hom-stab-improved} (the improvement when $2$ is invertible).]
Under the hypotheses of the theorem, Proposition \ref{p:extension-to-Bsharp} implies that the monodromy functor \eqref{eq:extension-to-C} factors through the functor of braid categories $\cC(M) \to \cB_\sharp(M)$, and therefore defines a (polynomial) twisted coefficient system on $\cB_\sharp(M)$. This allows us to apply \cite[Theorem A]{Palmer2018Twistedhomologicalstability} instead of \cite[Theorem D]{Krannich2019Homologicalstabilitytopological} (Theorem \ref{t:ths-Krannich}) to deduce twisted homological stability for the configuration spaces $C_k(M)$ with coefficients in this functor. The advantage of \cite{Palmer2018Twistedhomologicalstability} is that its proof shows that if, for a particular manifold $M$ and ring $R$, one knows untwisted homological stability for $C_k(M')$ with coefficients in $R$ with a certain slope $\ell$, where $M'$ is $M$ minus any finite number of points, then it proves that twisted homological stability holds for $C_k(M)$ for any polynomial twisted coefficient system $\cB_\sharp(M) \to \mathrm{Mod}_R$, \emph{with the same slope $\ell$}. Thus, we may apply \cite[Theorem 1.4]{KupersMiller2015} (and the universal coefficient theorem), which implies that, if $\bK$ is a field of characteristic $\neq 2$, then the configuration spaces $C_k(M')$ are homologically stable with slope $1$ for coefficients in $\bK$. (This uses the assumption that $\mathrm{dim}(M) \geq 3$ for a second time.) By \cite{Palmer2018Twistedhomologicalstability}, the configuration spaces $C_k(M)$ are homologically stable with slope $1$ for any polynomial twisted coefficient system $\cB_\sharp(M) \to \mathrm{Mod}_{\bK}$ (\cf \cite[Remark 6.5]{Palmer2018Twistedhomologicalstability}), and the spectral sequence comparison argument of the proof of Theorem \ref{thm-hom-stab} then implies that the stabilisation maps \eqref{eq:stab-maps-with-identifications} induce isomorphisms on $\bK$-homology up to degree $k-1$ and surjections up to degree $k$. Finally, to deduce that the stabilisation maps \eqref{eq:stab-maps-with-identifications} induce isomorphisms on $\bZ[\tfrac12]$ homology up to degree $k-2$ and surjections up to degree $k-1$, we use the fact that $\bQ / \bZ[\tfrac12] \cong \bigoplus_{p \neq 2} \bZ(p^{\infty})$ and the short exact sequences of coefficient groups from the end of the proof of Theorem \ref{thm-hom-stab}.
\end{proof}

\begin{proof}[Proof of Proposition \ref{prop:polynomiality}.]
Let $\bK$ be a field and $i\geq 0$ an integer. Write
\[
F_i = H_i(-;\bK) \circ \smash{\widetilde{\mathrm{Mon}}}^{c,D}(M,\xi) \colon \cC(M) \too \mathrm{Vect}_\bK .
\]
In this notation, we need to show that $\mathrm{deg}(F_i) \leq i$, where $\mathrm{deg}(-)$ is as in Definition \ref{d:degree}.

Recall that we have assumed that the subset $\widetilde{c}_D \subseteq \pi_{d-1}(X)$ has cardinality $1$, and denote by $Z$ the corresponding path-component of $\Omega^{d-1}X$. Also write $Y = \Gamma^D(M,\xi)$ for the space of sections of $\xi \colon E \to M$ that restrict to the fixed section $s_D$ (\cf Notation \ref{notation-inputs}) on $D \subseteq \partial M$. There are natural maps
\begin{equation}
\label{eq:decomposition-of-Fk}
e_k \colon Y \times Z^k \too \Gamma_k^{c,D}(M,\xi) = \smash{\widetilde{\mathrm{Mon}}}^{c,D}(M,\xi)(k),
\end{equation}
defined in Figure \ref{fig:ik}, such that the square
\begin{center}
\begin{tikzpicture}
[x=1mm,y=1mm]
\node (tl) at (0,15) {$Y \times Z^k$};
\node (tr) at (30,15) {$Y \times Z^{k+1}$};
\node (bl) at (0,0) {$\Gamma_k^{c,D}(M,\xi)$};
\node (br) at (30,0) {$\Gamma_{k+1}^{c,D}(M,\xi)$};
\draw[->] (tl) to (tr);
\draw[->] (bl) to (br);
\draw[->] (tl) to node[left,font=\small]{$e_k$} (bl);
\draw[->] (tr) to node[right,font=\small]{$e_{k+1}$} (br);
\end{tikzpicture}
\end{center}
commutes up to homotopy, where the bottom horizontal map is the stabilisation map (namely the top horizontal map of \eqref{eq:map-of-key-fibrations}, which is also $\smash{\widetilde{\mathrm{Mon}}}^{c,D}(M,\xi)(\iota_k)$) and the top horizontal map is the obvious inclusion $(s,f_1,\ldots,f_k) \mapsto (s,f_1,\ldots,f_k,*)$, where $* \in Z$ is any basepoint (exactly which basepoint does not matter since $Z$ is path-connected). Moreover, the map \eqref{eq:decomposition-of-Fk} is a topological embedding, and it is not hard to define a deformation retraction of $\Gamma_k^{c,D}(M,\xi)$ onto its image -- hence \eqref{eq:decomposition-of-Fk} is a homotopy equivalence.

Now consider an automorphism $\alpha \in \mathrm{Aut}_{\cC(M)}(k) = \pi_1(\dot{C}_k(M))$, which, via the endofunctor $s$ of $\cC(M)$, induces an automorphism $s(\alpha) = \sigma_k(\alpha) \in \pi_1(\dot{C}_{k+1}(M))$. (Recall that the notation $\sigma_k(-)$ was introduced in Definition \ref{def:sigmak-and-vkl}.) One may check that, under the identifications \eqref{eq:decomposition-of-Fk}, we have an equality
\begin{equation}
\label{eq:decomposition-alpha}
\smash{\widetilde{\mathrm{Mon}}}^{c,D}(M,\xi)(s(\alpha)) = \smash{\widetilde{\mathrm{Mon}}}^{c,D}(M,\xi)(\alpha) \times \mathrm{id}_Z
\end{equation}
in the group of homotopy automorphisms up to homotopy $\pi_0(\mathrm{hAut}(Y \times Z^{k+1}))$.

Next, consider the morphism $\iota_k \colon k \to k+1$ of $\cC(M)$. As observed earlier in this section, we have the identity $s(\iota_k) = v_k^2(\tau_1) \circ \iota_{k+1}$, where $\tau_1 \in B_2$ is the standard generator and the homomorphism $v_k^2 \colon B_2 \to \pi_1(\dot{C}_{k+2}(M))$ is as in Definition \ref{def:sigmak-and-vkl}. We also noted above that, under the identifications \eqref{eq:decomposition-of-Fk}, the map $\smash{\widetilde{\mathrm{Mon}}}^{c,D}(M,\xi)(\iota_k)$ corresponds to the obvious inclusion of $Y \times Z^k$ into $Y \times Z^{k+1}$ using the fixed basepoint $*$ of $Z$. Using this, one may check that, under the identifications \eqref{eq:decomposition-of-Fk}, we have an equality
\begin{equation}
\label{eq:decomposition-iota}
\smash{\widetilde{\mathrm{Mon}}}^{c,D}(M,\xi)(s(\iota_k)) = \smash{\widetilde{\mathrm{Mon}}}^{c,D}(M,\xi)(\iota_k) \times c_Z
\end{equation}
in the homotopy set $\pi_0(\mathrm{Map}(Y \times Z^{k+1} , Y \times Z^{k+2})) = [Y \times Z^{k+1} , Y \times Z^{k+2}]$, where $c_Z \colon Z \to Z$ is the identity map if $d \geq 3$, and if $d=2$ it is the homotopy automorphism of $Z \subseteq \Omega X$ given by conjugating a given loop in the path-component $Z$ of $\Omega X$ by the fixed loop $* \in Z$.

The morphisms of $\cC(M)$ are generated by its automorphisms together with the morphisms $\iota_k$ for $k \in \bN$, so the identifications \eqref{eq:decomposition-of-Fk}, \eqref{eq:decomposition-alpha} and \eqref{eq:decomposition-iota} imply that we have a natural isomorphism
\begin{equation}
\label{eq:decomposition-of-functors}
\smash{\widetilde{\mathrm{Mon}}}^{c,D}(M,\xi) \circ s \,\cong\, \smash{\widetilde{\mathrm{Mon}}}^{c,D}(M,\xi) \times \mathrm{C}(c_Z)
\end{equation}
of functors $\cC(M) \to \hotop$, where, for an endomorphism $f \colon A \to A$ in $\hotop$, the functor $\mathrm{C}(f) \colon \cC(M) \to \hotop$ sends each object to $A$, each automorphism to $\mathrm{id}_A$ and each morphism $\iota_k$ to $f$. Applying $H_i(-;\bK)$ and using the K{\"u}nneth theorem (including the \emph{naturality} of the K{\"u}nneth isomorphism), the decomposition \eqref{eq:decomposition-of-functors} induces an isomorphism of functors
\begin{equation}
\label{eq:decomposition-of-functors2}
F_i \circ s \,\cong\, \bigoplus_{j=0}^i F_{i-j} \otimes H_j(\mathrm{C}(c_Z);\bK),
\end{equation}
such that the natural transformation $F_i \,\iota \colon F_i \to F_i \circ s$ corresponds, under \eqref{eq:decomposition-of-functors2}, to the inclusion of the $j=0$ summand, where we are using the fact that $Z$ is path-connected to identify $H_0(\mathrm{C}(c_Z);\bK)$ with the constant functor at $\bK$.

Using \eqref{eq:decomposition-of-functors2} and the fact that $F_i \,\iota$ corresponds to the inclusion of the $j=0$ summand under this identification, we deduce (i) that $F_i \,\iota$ is split-injective in the functor category $\mathrm{Fun}(\cC(M) , \mathrm{Vect}_\bK)$ and (ii) that we have an isomorphism of functors
\begin{equation}
\label{eq:decomposition-of-functors3}
\Delta F_i \,\cong\, \bigoplus_{j=1}^i F_{i-j} \otimes H_j(\mathrm{C}(c_Z);\bK).
\end{equation}
The fact that $c_Z$ is a homotopy automorphism, i.e., invertible in $\hotop$, implies that the functor $H_j(\mathrm{C}(c_Z);\bK)$ sends each $\iota_k$ to an isomorphism, which implies, by definition, that
\begin{equation}
\label{eq:degree-of-cZ}
\mathrm{deg}(H_j(\mathrm{C}(c_Z);\bK)) \leq 0.
\end{equation}

We now prove that $\mathrm{deg}(F_i) \leq i$ by induction on $i\geq 0$. For $i=0$, the identification \eqref{eq:decomposition-of-functors3} says that $\Delta F_0 = 0$. Together with the fact that $F_0 \,\iota$ is split-injective, this implies that $\mathrm{deg}(F_0) \leq 0$. For $i\geq 1$, using the identification \eqref{eq:decomposition-of-functors3}, Lemma \ref{lem:direct-sum-tensor-product}, the fact \eqref{eq:degree-of-cZ} and the inductive hypothesis, we see that
\[
\mathrm{deg}(\Delta F_i) = \mathrm{deg} \Bigl( \bigoplus_{j=1}^i F_{i-j} \otimes H_j(\mathrm{C}(c_Z);\bK) \Bigr) \leq \mathrm{max}_{j=1}^i \{ \mathrm{deg}(F_{i-j}) \} \leq i-1.
\]
Together with the fact that $F_i \,\iota$ is split-injective, this implies by definition that $\mathrm{deg}(F_i) \leq i$.
\end{proof}

\begin{figure}[t]
\centering
\begin{tikzpicture}
[x=1mm,y=1mm,font=\small]

\fill[yellow!10] (-20,-10) rectangle (0,30);
\fill[black!3] (0,0) rectangle (90,20);
\fill[red!10] (4,0) -- (7,10) arc (180:0:3) -- (16,0) -- cycle;
\fill[green!15] (4,0) -- (7,10) arc (180:0:3) -- (16,0) -- (16,20) -- (4,20) -- cycle;
\fill[red!10] (24,0) -- (27,10) arc (180:0:3) -- (36,0) -- cycle;
\fill[green!15] (24,0) -- (27,10) arc (180:0:3) -- (36,0) -- (36,20) -- (24,20) -- cycle;
\fill[red!10] (74,0) -- (77,10) arc (180:0:3) -- (86,0) -- cycle;
\fill[green!15] (74,0) -- (77,10) arc (180:0:3) -- (86,0) -- (86,20) -- (74,20) -- cycle;

\draw[black!10] (4,0)--(4,20);
\draw[black!10] (16,0)--(16,20);
\draw[black!10] (24,0)--(24,20);
\draw[black!10] (36,0)--(36,20);
\draw[black!10] (74,0)--(74,20);
\draw[black!10] (86,0)--(86,20);

\draw[green!50!black] (0,0) rectangle (90,20);
\draw[green!50!black] (0,0) -- (0,-10);
\draw[green!50!black,densely dashed] (0,-10) -- (-20,-10) -- (-20,30) -- (0,30);
\draw[green!50!black] (0,30) -- (0,20);
\node at (0,0) [fill,inner sep=1pt] {};
\node at (0,0) [anchor=north west,font=\small] {$0$};
\node at (90,0) [fill,inner sep=1pt] {};
\node at (90,0) [anchor=north] {$k$};
\node at (-10,10) [font=\normalsize] {$M$};
\node at (10,10) [fill,white,circle,inner sep=1pt] {};
\node at (30,10) [fill,white,circle,inner sep=1pt] {};
\node at (80,10) [fill,white,circle,inner sep=1pt] {};
\node at (10,10) [draw,circle,inner sep=1pt] {};
\node at (30,10) [draw,circle,inner sep=1pt] {};
\node at (80,10) [draw,circle,inner sep=1pt] {};
\node at (55,10) {$\cdots$};
\draw[decorate,decoration={brace,amplitude=3pt,mirror}] (91,0) -- (91,20);
\node at (92,10) [anchor=west,font=\normalsize] {$D$};

\draw[red] (4,0) -- (7,10) arc (180:0:3) -- (16,0);
\draw[red] (24,0) -- (27,10) arc (180:0:3) -- (36,0);
\draw[red] (74,0) -- (77,10) arc (180:0:3) -- (86,0);

\end{tikzpicture}
\caption{The map $e_k \colon Y \times Z^k \too \Gamma_k^{c,D}(M,\xi)$ from the proof of Proposition \ref{prop:polynomiality}, where $Z$ is a given path-component of $\Omega^{d-1}X = \mathrm{Map}((D^{d-1},\partial D^{d-1}),(X,\{x_0\}))$ and $Y = \Gamma^D(M,\xi)$. \\
Given inputs $(s,f_1,\ldots,f_k)$, the section $e_k(s,f_1,\ldots,f_k)$ of $\hat{\xi}$ over $\hat{M}_k \smallsetminus z_k$ is given by $s$ in the yellow region (namely $M$) and by the maps $f_1,\ldots,f_k$ on the red arcs (representing embedded $(d-1)$-discs with their boundary on the bottom face of $D \times [0,k]$). Recall that, over $(D \times [0,k]) \smallsetminus z_k$, the bundle is trivial with fibre $X$, so we may think of sections as maps $(D \times [0,k]) \smallsetminus z_k \to X$. The bottom face of $D \times [0,k]$ is sent to the basepoint $x_0$ of $X$. We then extend the map in the red regions by defining it to be constant along radii centred at the punctures $z_k$, and we extend it in the green regions by defining it to be constant in the vertical direction.}
\label{fig:ik}
\end{figure}

\begin{rmk}[\emph{The hypothesis of Proposition \ref{prop:polynomiality}.}]
The assumption $\lvert \widetilde{c}_D \rvert = 1$ (where $\widetilde{c}_D \subseteq \pi_{d-1}(X)$ is the subset induced by the ``singularity condition'' $c \subseteq \pi_0(\Sigma(\xi))$) of Proposition \ref{prop:polynomiality} means that the corresponding subspace $Z \subseteq \Omega^{d-1}X$ is a single path-component, rather than a union of several path-components. The path-connectedness of $Z$ is used in a key way for the identification \eqref{eq:decomposition-of-functors3} of $\Delta F_i$ in terms of $F_{i-1}, F_{i-2}, \ldots$. If $Z$ were disconnected, we would have $H_0(\mathrm{C}(c_Z);\bK) = \bK^{\pi_0(Z)}$ and, denoting by $Z_0$ the path-component of $Z$ containing its basepoint, the identification \eqref{eq:decomposition-of-functors3} would become
\[
\Delta F_i \,\cong\, F_i \otimes \bK^{\pi_0(Z) \smallsetminus \{Z_0\}} \oplus \bigoplus_{j=1}^i F_{i-j} \otimes H_j(\mathrm{C}(c_Z);\bK),
\]
which would break the inductive argument, since this decomposition involves $F_i$ itself.
\end{rmk}

\begin{rmk}[\emph{Naturality of the K{\"u}nneth theorem.}]
\label{rmk:naturality}
In order to transform the identification \eqref{eq:decomposition-of-functors} of functors $\cC(M) \to \hotop$ into the identification \eqref{eq:decomposition-of-functors2} of functors $\cC(M) \to \mathrm{Vect}_\bK$, we used the K{\"u}nneth theorem for field coefficients, which does not involve any Tor terms. If we had worked instead with $\bZ$ coefficients, we would have obtained a decomposition similar to \eqref{eq:decomposition-of-functors2} -- \emph{at the level of objects} -- including also some Tor terms. The appearance of the Tor terms themselves is no problem, since one may prove an analogue of the second and third points of Lemma \ref{lem:direct-sum-tensor-product} for $\mathrm{Tor}(-)$ instead of $\otimes$, so they behave as desired with respect to degree. The problem instead is that the K{\"u}nneth short exact sequences are split, but not \emph{naturally} split (unless -- of course -- the Tor terms vanish). Thus, we would not have been able to obtain a decomposition \emph{of functors} analogous to \eqref{eq:decomposition-of-functors2}, since the naturality of the K{\"u}nneth theorem, in the case where the Tor terms vanish, was key to upgrading \eqref{eq:decomposition-of-functors2} from an isomorphism at the level of objects to an isomorphism of functors. See also Remark 4.4 of \cite{Palmer2018Twistedhomologicalstability} for a similar comment about the (non-)naturality of the splitting of the K{\"u}nneth short exact sequence.
\end{rmk}

\begin{rmk}[\emph{Improving the range of stability.}]
Our main homological stability result states that, on homology with field coefficients, the stabilisation maps \eqref{eq:stab-maps-with-identifications} induce isomorphisms up to degree $\tfrac{k}{2} - 1$ and surjections up to degree $\tfrac{k}{2}$. This then implies the analogous statements for homology with integral coefficients -- but only in a range of degrees that is smaller by one, i.e., isomorphisms up to degree $\tfrac{k}{2} - 2$ and surjections up to degree $\tfrac{k}{2} - 1$.

However, under certain hypotheses, this loss of one in the range of degrees for integral homology may be avoided: namely, if the integral homology groups $H_*(\Omega^{d-1}X;\bZ)$ and $H_*(\Gamma^D(M,\xi);\bZ)$ are torsion-free in all degrees. Under this assumption, one may run the proof of Proposition \ref{prop:polynomiality} using $\bZ$ in place of $\bK$, since the torsion-freeness assumption implies that one may apply the K{\"u}nneth theorem for $\bZ$ coefficients without the appearance of any Tor terms (see Remark \ref{rmk:naturality} above for the importance of the vanishing of the Tor terms, and see Remark 4.4 of \cite{Palmer2018Twistedhomologicalstability} for the analogous remark in a similar setting). We could then also run the proof of Theorem \ref{tmain} directly with $\bZ$ coefficients, without any need to pass from field coefficients to $\bZ$ coefficients at the end, which is where we lose $1$ from the range of stability.
\end{rmk}

Finally, we prove a split-injectivity result for the homology of configuration-mapping spaces, under certain conditions on the underlying manifold $M$. Let us fix a connected manifold $M$, an embedded disc $D \subseteq \partial M$, a based space $X$ and a subset $c \subseteq [S^{d-1},X]$. Recall that the stabilisation maps $\cmap{k}{c,D}{M}{X} \to \cmap{k+1}{c,D}{M}{X}$ fit into a map of fibre sequences of the form \eqref{eq:map-of-key-fibrations}, inducing a map of their associated Serre spectral sequences:
\begin{equation}
\label{eq:map-of-SSS-repeated}
\centering
\begin{split}
\begin{tikzpicture}
[x=1mm,y=1mm,font=\small]
\node (tl) at (0,15) {$H_p(\dot{C}_k(M) ; H_q( \mathrm{Map}^{c,D}(\hat{M}_k \smallsetminus z_k , X) ; \bZ ))$};
\node (tr) at (72,15) {$H_p(\dot{C}_{k+1}(M) ; H_q( \mathrm{Map}^{c,D}(\hat{M}_{k+1} \smallsetminus z_{k+1} , X) ; \bZ ))$};
\node (bl) at (0,0) {$H_{p+q}(\cmap{k}{c,D}{M}{X} ; \bZ)$};
\node (br) at (72,0) {$H_{p+q}(\cmap{k+1}{c,D}{M}{X} ; \bZ)$};
\node at (0,7.5) {\rotatebox{270}{$\Rightarrow$}};
\node at (72,7.5) {\rotatebox{270}{$\Rightarrow$}};
\draw[->] (tl) to (tr);
\draw[->] (bl) to (br);
\end{tikzpicture}
\end{split}
\end{equation}
(Note that above we take coefficients in $\bZ$ rather than in a field, as was the case in \eqref{eq:map-of-SSS}.)

Our splitting result is then the following.

\begin{thm}
\label{thm:split-injectivity}
Let $M$ have dimension at least $3$ and assume that either $\pi_1(M)=0$ or the handle-dimension of $M$ is at most $\mathrm{dim}(M) - 2$. Then the map of Serre spectral sequences \eqref{eq:map-of-SSS-repeated} above is split-injective on each entry of the $E^2$ pages.
\end{thm}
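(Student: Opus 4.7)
The plan is to exploit the factorisation of the monodromy functor through the enhanced braid category $\cB_\sharp(M)$ provided by Proposition \ref{p:extension-to-Bsharp}. The hypotheses of Theorem \ref{thm:split-injectivity} (connected, $d\geq 3$, simply-connected or handle-dimension at most $d-2$) are exactly those of that proposition in the special case of a trivial bundle $\xi = M \times X \to M$, which is precisely the setting of configuration-mapping spaces via Example \ref{eg:trivial-bundle}. So I may assume from the outset that the functor $H_q(\smash{\widetilde{\mathrm{Mon}}}^{c,D}(M,X)(-);\bZ)$ on $\cC(M)$ extends to a functor $F_q \colon \cB_\sharp(M) \to \mathrm{Ab}$ for each $q \geq 0$.

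The crucial property of $\cB_\sharp(M)$ I would use is that, in addition to the stabilisation morphism $\iota_k \colon k \to k+1$ (add a new strand sitting at $p_{k+1}$), it contains a ``forget the last strand'' morphism $\pi_k \colon k+1 \to k$ with $\pi_k \circ \iota_k = \mathrm{id}_k$. Applying $F_q$, I immediately obtain a retraction $F_q(\pi_k)$ of the coefficient-system stabilisation $F_q(\iota_k) \colon F_q(k) \to F_q(k+1)$, so at the level of the abelian groups of $E^2$-coefficients the stabilisation is split-injective with a canonical splitting.

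To promote this algebraic splitting to one of the whole $E^2$-entry $H_p(\dot{C}_k(M); F_q(k))$, I would pass to the Fadell--Neuwirth tower of ordered configuration spaces. The topological stabilisation $\dot{C}_k(M) \to \dot{C}_{k+1}(M)$ lifts to a $\Sigma_k$-equivariant inclusion $\iota_k^{\mathrm{ord}} \colon F_k(\mathring M) \hookrightarrow F_{k+1}(\mathring M)$ (adjoining the fixed point $p_{k+1}$), and this inclusion admits the $\Sigma_k$-equivariant retraction $\pi_k^{\mathrm{ord}}$ given by forgetting the $(k+1)$-st coordinate -- precisely the topological model of the abstract retraction $\pi_k$ in $\cB_\sharp(M)$. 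The compatibility between $\pi_k^{\mathrm{ord}}$ and $F_q(\pi_k)$, read off the construction of the factorisation in \cite{PalmerTillmann2020Pointpushingactions}, yields a $\Sigma_k$-equivariant retraction of $H_p(F_k(\mathring M);\tilde F_q(k)) \to H_p(F_{k+1}(\mathring M);\tilde F_q(k+1))$, where $\tilde F_q(k)$ denotes the pullback of the coefficient system along the covering $F_k(\mathring M) \to C_k(\mathring M)$.

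Finally, I would descend from $\Sigma_k$-equivariant to $\Sigma_{k+1}$-equivariant homology on the larger configuration space to obtain the desired retraction of $H_p(\dot{C}_{k+1}(M); F_q(k+1))$ onto $H_p(\dot{C}_k(M); F_q(k))$. This step uses that the covering maps $F_k \to C_k \simeq \dot{C}_k$ are classified by the permutation groups and that Shapiro's lemma identifies $H_*^{\Sigma_k}(F_{k+1}(\mathring M); \tilde F_q(k+1))$ with $H_*^{\Sigma_{k+1}}(F_{k+1}(\mathring M); \mathrm{ind}_{\Sigma_k}^{\Sigma_{k+1}} \tilde F_q(k+1))$, from which a transfer/summation argument produces the required splitting. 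The main obstacle in this plan is precisely this last reconciliation of the two symmetric-group actions: the retraction $\pi_k^{\mathrm{ord}}$ is genuinely only $\Sigma_k$-equivariant, so care is needed to produce a splitting of the $\Sigma_{k+1}$-equivariant (i.e.\ unordered) homology; this is where the $\cB_\sharp(M)$-factorisation is indispensable, since it guarantees that forget-and-average operations act coherently on the twisted coefficients.
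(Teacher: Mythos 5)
Your first step coincides with the paper's proof: since configuration-mapping spaces correspond to the trivial bundle $M\times X\to M$, the hypotheses of Theorem \ref{thm:split-injectivity} are exactly those of Proposition \ref{p:extension-to-Bsharp}, so for each $q$ the $E^2$-coefficients assemble into a functor $F_q\colon\cB_\sharp(M)\to\mathrm{Ab}$. At this point the paper simply invokes \cite[Theorem A and Remark 1.3]{Palmer2018Twistedhomologicalstability}, which states that for \emph{any} functor $T\colon\cB_\sharp(M)\to\mathrm{Ab}$ the stabilisation maps $H_p(C_k(\mathring{M});T(k))\to H_p(C_{k+1}(\mathring{M});T(k+1))$ are split-injective in all degrees; that citation finishes the proof. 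You instead attempt to reprove this statement, and your sketch has a genuine gap.

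The gap is not only the $\Sigma_k$-versus-$\Sigma_{k+1}$ bookkeeping you flag, but the shape of the candidate retraction itself. The correct descent of ``forget the last strand'' to unordered configuration spaces is the transfer of the $(k+1)$-sheeted covering $\widetilde{C}_{k+1}(\mathring{M})\to C_{k+1}(\mathring{M})$ (configurations with one marked point) followed by forgetting the marked point; on twisted homology this is well defined precisely because $\cB_\sharp(M)$ contains the point-forgetting morphisms. But its composite with the stabilisation map is \emph{not} the identity: only one of the $k+1$ sheets forgets the newly inserted point $p_{k+1}$ and returns the original class, while the other $k$ sheets forget one of the original points and retain $p_{k+1}$. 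One obtains a relation of the form $\tau_{k+1}\circ s_k=\mathrm{id}+s_{k-1}\circ\tau_k$, and the operator $\mathrm{id}+s_{k-1}\circ\tau_k$ is not obviously invertible over $\bZ$, so no retraction drops out directly from your $\pi_k\circ\iota_k=\mathrm{id}_k$ at the coefficient level. Producing an actual splitting requires the upper-triangular induction using the operations that forget $r$ points in all $\binom{k+1}{r}$ possible ways simultaneously --- McDuff's argument, carried out for $\cB_\sharp(M)$-coefficient systems in \cite{Palmer2018Twistedhomologicalstability}. Either carry out that induction explicitly or, as the paper does, cite that theorem; your Shapiro's-lemma framing does not substitute for it.
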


\begin{proof}
By \cite[Theorem A and Remark 1.3]{Palmer2018Twistedhomologicalstability}, for any abelian group-valued functor $T \colon \cB_\sharp(M) \to \mathrm{Ab}$, the induced stabilisation maps on $T$-twisted homology
\[
H_*(C_k(M);T(k)) \too H_*(C_{k+1}(M);T(k+1))
\]
are split-injective in all degrees. This implies the statement of Theorem \ref{thm:split-injectivity} by applying it to the extended monodromy functor $\cB_\sharp(M) \to \hotop$ of Proposition \ref{p:extension-to-Bsharp} composed with the homology functor $H_q(-;\bZ) \colon \hotop \to \mathrm{Ab}$.
\end{proof}


\phantomsection
\addcontentsline{toc}{section}{References}
\renewcommand{\bibfont}{\normalfont\small}
\setlength{\bibitemsep}{0pt}
\printbibliography

\vspace{0pt plus 1filll}

\noindent {\itshape\small Institutul de Matematică Simion Stoilow al Academiei Române, 21 Calea Griviței, Bucharest, Romania}

\noindent {\itshape\small Mathematical Institute, University of Oxford, Andrew Wiles Building, Oxford, OX2 6GG, UK}

\noindent {\tt mpanghel@imar.ro}

\noindent {\tt tillmann@maths.ox.ac.uk}

\end{document}